\newtheorem{lemma}{Lemma}[subsection]
\newtheorem{theorem}[lemma]{Theorem}
\newtheorem{proposition}[lemma]{Proposition}
\newtheorem{corollary}[lemma]{Corollary}
\newtheorem{conjecture}[lemma]{Conjecture}
\newtheorem{introthm}{Theorem}
\theoremstyle{definition}
\newtheorem{example}[lemma]{Example}
\newtheorem{remark}[lemma]{Remark}
\newtheorem{definition}[lemma]{Definition}
\newtheorem{question}[lemma]{Question}
\newtheorem{fact}[lemma]{Fact}
\newtheorem{hypothesis}[lemma]{Hypothesis}
\newcommand{\G}[1][n]{\sym[#1][G]}		% wreath product
\newcommand{\D}[1][n]{\mathscr{D}_{#1}}		% Dowling poset
\newcommand{\Q}{\Pi}			% partition lattices
\newcommand{\NN}{\mathbb{N}}			% naturals
\newcommand{\ZZ}{\mathbb{Z}}			% integers
\newcommand{\Z}{\mathbb{Z}}			% integers option 2
\newcommand{\QQ}{\mathbb{Q}}			% rationals
\newcommand{\C}{\mathbb{C}}			% complex numbers option 2
\newcommand{\RR}{\mathbb{R}}			% real numbers
\newcommand{\R}{\mathbb{R}}			% real numbers option 2
\newcommand{\K}{\Bbbk}
\newcommand{\sym}[1][n]{\mathfrak{S}_{#1}}	% symmetric group option 2
\newcommand{\F}{S}			% non-free points
\newcommand{\FI}[1][G]{\mathtt{FI}_{#1}}		% category FI_G
\newcommand{\FB}[1][G]{\mathtt{FB}_{#1}}		% category FI_G
\newcommand{\Cat}{\mathtt{C}}			% general category C
\newcommand{\Set}{\mathtt{Set}}			% SET category of sets
\newcommand{\zero}{\hat{0}}			% minimum of poset
\DeclareMathOperator{\WH}{WH}			% Whitney homology
\DeclareMathOperator{\Ho}{H}			% homology/cohomology
\newcommand{\wt}[1]{\widetilde{#1}}		% enriched partition
\newcommand{\parts}[1][\beta]{(\wt{#1},z)}%	more tuples
\newcommand{\into}{\hookrightarrow}		% injection
\newcommand{\onto}{\twoheadrightarrow}  % surjection
\DeclareMathOperator{\Conf}{Conf}		% config space
\DeclareMathOperator{\rk}{rk}			% rank
\DeclareMathOperator{\Ind}{Ind}			% Induced representations
\DeclareMathOperator{\Hom}{Hom}
\DeclareMathOperator{\Sing}{Sing}
\newcommand{\ol}[1]{\overline{#1}}		% overline
\DeclareMathOperator{\dist}{dist}
\DeclareMathOperator{\Gen}{Gen}
\DeclareMathOperator{\conv}{Conv}
\title[Representation stability phenomena in orbit configuration spaces]{A generating function approach to new representation stability phenomena in orbit configuration spaces}
\author{Christin Bibby}
\address{Department of Mathematics, University of Michigan, Ann Arbor, MI, USA}
\email{\href{mailto:bibby@umich.edu}{bibby@umich.edu}}
\author[Nir Gadish]{Nir Gadish$^1$}
\address{Department of Mathematics, University of Chicago, Chicago, IL, USA}
\curraddr{Department of Mathematics, MIT, Cambridge, MA, USA}
\email{\href{mailto:ngadish@mit.edu}{ngadish@mit.edu}}
\thanks{$^1$ N.G. is supported by NSF Grant No. DMS-1902762}
\keywords{orbit configuration space, Dowling lattice, hyperplane arrangement, representation stability, twisted commutative algebra, combinatorial species}
\subjclass[2010]{Primary
55R80; %discriminantal varieties, configuration spaces (algebraic topology, fiber spaces)
Secondary
20C30, %Representations of finite symmetric groups
05E18, %Group actions on combinatorial structures
%52C35, %Arrangements of points, flats, hyperplanes (discrete geometry)
%14N90, %configurations and arrangements of linear subspaces (projective and enumerative geometry)
14L30%group actions on varieties or schemes
}
\begin{document}

\begin{abstract}
As countless examples show, it can be fruitful to study a sequence of
complicated objects all at once via the formalism of generating functions. 
We apply this point of view to the homology and combinatorics of orbit 
configuration spaces: using the notion of twisted commutative algebras, 
which essentially categorify exponential generating functions.
This idea allows for a factorization
of the orbit configuration space ``generating function'' into an infinite
product, whose terms are surprisingly easy to understand. Beyond the
intrinsic aesthetic of this decomposition and its quantitative consequences,
it reveals a sequence of primary, secondary, and higher representation stability
phenomena. Based on this, we give a simple geometric technique for identifying
new stabilization actions with finiteness properties,
which we use to unify and generalize known stability results.
As a first new application of our methods, we establish secondary and higher stability for configuration spaces on $i$-acyclic spaces. For another application, we describe a natural filtration by which one observes a filtered representation stability phenomenon in configuration spaces on graphs.
\end{abstract}

\maketitle
\section{Introduction}
\stepcounter{subsection} 
Let $X$ be a Hausdorff topological space or a separated scheme over an algebraically closed field -- abbreviate and say that $X$ is a \emph{separated space}. A fundamental topological object attached to $X$ is its ordered configuration space $\Conf^n(X)$ of $n$ distinct points in $X$. Analogously, given a group $G$ acting freely on $X$ one defines an ordered configuration space of $n$ points with distinct orbits in $X$:
\begin{equation*}
\Conf^n_G(X) := \{(x_1,\dots,x_n)\in X^n \mid Gx_i\cap Gx_j=\emptyset \text{ for } i\neq j\}.
\end{equation*}
These spaces simultaneously generalize complements of many subspace arrangements such as the ordinary ordered configuration spaces as well as those associated with root systems of type C, see Example \ref{ex:typeC} below.
The symmetric group $\sym$ acts on $\Conf^n_G(X)$ by permuting the labels, and the group $G$ acts on every coordinate separately. Together these operations give an action of the wreath product group $\G := G^n\rtimes \sym$.

In this paper, we study the linear representations that arise in homology $\G \curvearrowright \Ho_*(\Conf^n_G(X))$ for various $n$.
Explicit calculations quickly become combinatorially challenging, and we address these difficulties by importing generating function methods into topology. A concrete consequence of our approach is that the homology exhibits \emph{representation stability} -- roughly, this stability theory gives a notion for when representations of different groups could be considered to be the same, and under which all homology representations eventually stabilize, see e.g. \cite{SS-TCA} and \cite{CEF}. Together with our detailed analysis of the combinatorics governing these spaces in \cite{BG}, representation stability allows us to constrain the types of representations that might occur in homology, vastly generalizing results previously known only for certain manifolds and revealing many new ways in which stability manifests, see \S\ref{subsec:intro-stability} below. For example, our results pertain to singular spaces including graphs, whose configuration spaces fail to exhibit most common forms of stability.

Generating functions have proved invaluable for understanding and organizing the combinatorics at play. For example, considering the compactly supported Euler characteristics of $\Conf^n_G(X)$, one can show that when a finite group $G$ acts on $X$ freely
\begin{equation}
  \sum_{n=0}^{\infty} \chi_c\left( \Conf^n_G(X) \right)\frac{t^n}{n!} = (1+|G|t)^{\frac{\chi_c(X)}{|G|}}
\end{equation}
and see \cite[\S1.34]{VW} for Vakil-Wood's similar motivic zeta function for unordered configuration spaces. The fractional power above is more naturally expressed using exponentials, 
\begin{equation}\label{eq:euler_product_formula}
    = \exp \left( \frac{\chi_c(X)}{|G|} \log (1+|G|t) \right) = \prod_{i=1}^{\infty} \exp \left( \chi_c(X)(-|G|)^{i-1} \frac{t^i}{i} \right).
\end{equation}
We show below in Theorem \ref{thm:intro-product} that the latter product decomposition already holds at the level of chains approximating the homology, and this forms the basis to our stability analysis.

Lifting the above generating function to spaces, think of the entire sequence of ordered configuration spaces along with their group actions at once, and collect them into a single object: a \emph{topological species} $\Conf^\bullet_G(X)$ -- this is essentially an $\NN$-graded space on which the symmetric group $\sym$ acts in the $n$-th component -- a standard categorification of the exponential generating function (see \S\ref{sec:tca} for details).
A major benefit to this approach, as observed by Petersen \cite{petersen}, is that the species $\Conf^\bullet_G(X)$ admits a coproduct structure, coming from the obvious equivariant inclusions
\[
\Conf^{n+m}_G(X) \into \Conf^n_G(X)\times \Conf^m_G(X)
\]
and this operation could be understood as standing behind their many homological stability phenomena. A species with (co)multiplication is known as a \emph{twisted commutative (co)algebra}, or (co)TCA for short, and Petersen \cite[Lemma 4.2]{petersen} shows that in many cases they give rise to representation stability in the sense of Church-Farb (see \cite{CEF}).

In \S\ref{sec:tca}, we adjust the terminology and promote symmetric group actions into ones of wreath products -- replacing the notion of TCA by the $G$-version which we call a \emph{GTCA}. With this, one can make sense of lifting the infinite product decomposition in \eqref{eq:euler_product_formula} to the level of homology: exponentials here stand for \emph{free} GTCAs. And indeed, such a homological decomposition holds in important special cases, including the linear and toric arrangements associated to root systems of type A, B, C and D, see Example \ref{ex:exact_product_formula}. In general, however, the product decomposition holds only at a finite page of a spectral sequence converging to the Borel-Moore homology. The following theorem is stated in a simplified form for the purpose of introduction; see Theorem \ref{thm:ss_factorization} for a general version.
We assume that homology is taken with coefficients in a Noetherian ring for which the K\"unneth formula holds; see our conventions in \S\ref{conventions} below.
\begin{introthm}[\textbf{Homological Product Decomposition}]\label{thm:intro-product}
Let $X$ be a separated topological space with an action of a finite group $G$ and consider the twisted commutative coalgebra of orbit configuration spaces $\Conf^\bullet_G(X)$.  
There is a spectral sequence of GTCAs converging to $\Ho^{BM}_*(\Conf^\bullet_G(X))$, such that when $G$ acts freely
\begin{equation}
\label{eq:intro-product}
    E^1 \cong \bigotimes_{n=1}^{\infty}\Ind^{\FB}_{G\times \sym}\left( \Ho^{BM}_*(X) \boxtimes \wt\Ho_{n-3}(\ol\Q_n)[n-1] \right).
\end{equation}
Here the operation $\Ind^{\FB}_{G\times \sym}$ takes a representation of $G\times \sym$ and freely generates from it a GTCA. Also $\ol\Q_n$ is the classical partition lattice, with the top and bottom elements removed, and its homology is the homology of its nerve.

Lastly, when $X \cong \RR^d$ possibly with some points removed, the sequence collapses and the product formula already holds in homology. More generally, this happens for all $i$-acyclic spaces (see Corollary \ref{cor:i-acyclic} below).
\end{introthm}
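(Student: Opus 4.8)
The plan is to prove degeneration by exhibiting an auxiliary grading that every higher differential must preserve, and then checking that for these particular $X$ there is no room for such a differential to be nonzero. First I would unpack the spectral sequence of Theorem~\ref{thm:ss_factorization}: it arises from the stratification of the topological species $X^\bullet$ by the orbit-diagonal arrangement, and its $E^1$-page is the free GTCA of \eqref{eq:intro-product}. In particular $E^1$ is \emph{generated} in each species-degree $n$ by the single module $\Ho^{BM}_*(X)\boxtimes\wt\Ho_{n-3}(\ol\Q_n)[n-1]$, which records the deepest stratum (all $n$ points in one $G$-orbit, where the local combinatorics is that of the partition lattice); a flat with $s$ blocks of sizes $b_1,\dots,b_s$ contributes the summand $\Ho^{BM}_*(X^s)\otimes\bigotimes_j\wt\Ho_{b_j-3}(\ol\Q_{b_j})[b_j-1]$. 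Since $d^1$ is a derivation of a free GTCA, and more generally every $d^r$ increases the number of blocks, an auxiliary grading that pins down the generators in each degree forces the whole sequence to collapse.

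The concrete realization when $X\cong\RR^d$ with a finite set $P$ of points removed is geometric. In each species-degree $n$ the space $\Conf^n_G(X)$ is the complement in $(\RR^d)^n$ of the finite arrangement of affine subspaces consisting of the orbit-diagonals $\{g\cdot x_i=x_j\}$ (affine because in the relevant examples $G$ acts by affine transformations, cf.\ Example~\ref{ex:typeC}) together with the slices $\{x_i=p\}$, $p\in P$. For such an arrangement the Goresky--MacPherson formula computes $\Ho^{BM}_*$ of the complement as a direct sum, over flats, of exactly the summands appearing on $E^1$; this \emph{is} the statement that our spectral sequence degenerates at $E^1$ and that the arrangement filtration is canonically split, so no extension problem remains, and naturality of the splitting in $X$ upgrades it to an isomorphism of GTCAs $E^1\cong\Ho^{BM}_*(\Conf^\bullet_G(X))$. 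Intrinsically, the degeneration reflects that the number of blocks (equivalently the arrangement codimension, equivalently the mixed Hodge weight when $X$ is taken to be a scheme) defines a grading on $E^1$ which on each summand is locked to the homological degree, precisely because $\Ho^{BM}_*(\RR^d\setminus P)$ is concentrated in the narrowest possible range of degrees; hence the bigrading (homological degree, block count) separates filtration levels, and each $d^r$ ($r\ge1$), preserving both gradings while strictly lowering homological degree, vanishes.

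The $i$-acyclic generalization recorded in Corollary~\ref{cor:i-acyclic} runs on the same intrinsic argument: $i$-acyclicity of $X$ --- reduced homology concentrated in a single degree --- is exactly the hypothesis that makes $\Ho^{BM}_*(X^s)$ as homologically rigid as possible, so the homological degree together with the block-count/weight grading still determines the flat a given class of $E^1$ represents, and all higher differentials die for the same reason. I expect the main obstacle to be the bookkeeping in this last step: the homological-degree grading by itself need not separate filtration levels --- for $X=\RR^d$ this can already fail and is rescued only by the finer codimension/weight grading --- so one must verify that the combination of the concentration degree, the block sizes, and the weights genuinely pins down the flat in every total degree. The remaining technical point is the extension problem over an arbitrary Noetherian coefficient ring, where one uses freeness of $\wt\Ho_{n-3}(\ol\Q_n)$ together with the splitting provided by the weight grading to conclude $E^1\cong E^\infty\cong\Ho^{BM}_*(\Conf^\bullet_G(X))$.
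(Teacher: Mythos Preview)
Your treatment of the spectral sequence and its $E^1$-factorization is fine and coincides with the paper's approach: it is exactly Theorem~\ref{thm:ss_factorization}, which in turn rests on Petersen's construction together with the Whitney homology and incidence-variety factorizations. Likewise, for $X\cong\RR^d$ with finitely many points removed you correctly invoke the Goresky--MacPherson formula, which is precisely what the paper does in Corollary~\ref{cor:affine}.

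The genuine gap is in your handling of the $i$-acyclic case. You write that ``$i$-acyclicity of $X$ --- reduced homology concentrated in a single degree,'' but this is not the definition. A space $X$ is $i$-acyclic when the natural map $\Ho^*_c(X)\to\Ho^*(X)$ (equivalently $\Ho_*(X)\to\Ho^{BM}_*(X)$) is the zero map; see Remark~\ref{rmk:differentials}. These notions are not equivalent: any product $X'\times\RR$ is $i$-acyclic, yet for most $X'$ its homology is spread across many degrees. Consequently your proposed mechanism --- that the homological degree together with the block count pins down the flat, leaving no room for $d^r$ --- does not go through: for a typical $i$-acyclic space the summands $\Ho^{BM}_*(X^s)\otimes\bigotimes_j\wt\Ho_{b_j-3}(\ol\Q_{b_j})$ overlap freely in total degree, and there is no mixed Hodge or weight grading to rescue you since $i$-acyclic spaces need not be algebraic.

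The paper's actual argument for the $i$-acyclic collapse is entirely different and is outsourced: it cites Arabia \cite{Arabia} and Petersen \cite{petersen-formal}, who show that $i$-acyclicity forces the relevant sheaf-theoretic diagonal data to be formal, from which vanishing of all differentials follows. Your weight-style argument does appear in the paper, but only for smooth \emph{projective} varieties (Remark~\ref{rmk:differentials}), where it limits the sequence to at most one nonzero differential rather than forcing full collapse.
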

Computing the Euler characteristic of the above expression recovers \eqref{eq:euler_product_formula} exactly. Note also that the special case of a punctured $\RR^2$ already includes the complements of hyperplane arrangements coming from root systems of types A and C in both their linear and toric variants.
In the case of type B/C hyperplane arrangements this coincides with the Whitney homology of a Dowling lattice,
for which Henderson \cite{henderson} observed this structure and used it to compute the Frobenius characteristic of the $\G$-action.
Our more general treatment of orbit configuration spaces, as explained next, also includes the linear and toric arrangements associated to root systems of types B and D.

\subsection{Punctured almost free $G$-spaces}
\label{sec:intro-punctures}
We introduced these orbit configuration spaces under the assumption that the group $G$ acts freely on $X$, but in fact we can loosen this condition to allow the configuration space to inhabit some points at which the action fails to be free, while excluding other positions.

From this point on, we will no longer assume that the action $G\curvearrowright X$ is free, but allow only finitely many exceptions to freeness -- such actions are said to be \emph{almost free}. Given a finite $G$-invariant subset $T\subseteq X$ of excluded positions, define the \textit{$T$-excluded orbit configuration space}
\begin{equation}
\label{eq:orbitconfig}
\Conf_G^n(X,T) := \{(x_1,\dots,x_n)\in X^n \mid \forall(i\neq j)Gx_i\cap Gx_j=\emptyset,\, x_i\notin T\}\subseteq X^n.
\end{equation}
The product decomposition in Theorem \ref{thm:intro-product} still holds in this more general setting, but with an extra factor for each $G$-orbit of $T$ as well as one for each non-free $G$-orbit of $X$; see Theorem \ref{thm:ss_factorization} for the full expression.

We view $\Conf_G^n(X,T)$ as a subspace of $X^n$ rather than of the punctured $(X\setminus T)^n$ to allow the factors in the product decomposition to depend on $\Ho_*^{BM}(X)$ rather than on $\Ho_*^{BM}(X\setminus T)$.
This choice is substantially more interesting combinatorially (see \cite{BG} for a thorough treatment), but it also brings real benefits. First, this allows us to exploit properties of $X$ that get corrupted by puncturing, such as it being affine or projective, thus often simplifying spectral sequence calculations, see Remark \ref{rmk:differentials}.

Second, configuration spaces in punctured linear spaces $\R^d\setminus\{r_1,\dots,r_k\}$ get treated as complements of linear subspace arrangements, thus bringing to bare a vast body of knowledge and explicit formulas, see Corollary \ref{cor:affine}.

Lastly, allowing the case in which $G$ does not act freely on $X$ and varying $T$ lets our setup include all sequences of toric root system arrangements: in Example \ref{ex:typeC} we consider the action of $\Z_2$ on $\C^\times$ by group inversion. This action is not free precisely at the two-torsion points $\{\pm 1 \} \subset \C^\times$, and choosing $T$ to be $\emptyset$, $\{+1\}$, or $\{\pm1\}$ yields the arrangement associated to a root system of type D, B, and C, respectively.

\subsection{Representation stability} \label{subsec:intro-stability}
Unpacking the product decomposition and extracting useful information from it presents a new combinatorial challenge. We face it with the framework of representation stability, which for our purposes could be understood as the representation theory of GTCAs -- mainly handling finite generation and Noetherianity of modules and their representation theoretic properties (see Theorem \ref{thm:primary-multiplicity} for the connection with other standard interpretations of the theory).
The multiplication structure on $\Ho^{BM}_*(\Conf^\bullet_G(X))$ gives rise to stabilization operations of introducing points to configurations, e.g.
\[
\Ho^{BM}_{d}(X)\otimes \Ho^{BM}_{i}(\Conf^n_G(X)) \to \Ho^{BM}_{d+i}(\Conf^{n+1}_G(X))
\]
which Petersen observed to be conjugate to the ordinary forgetful map \[\Conf^{n+1}_G(X) \to \Conf^n_G(X)\] under Poincar\'e duality when $X$ is a connected $d$-manifold.
Stabilization operations thus generate a GTCA, over which $\Ho^{BM}_*(\Conf^\bullet_G(X))$ forms a module, and representation stability is synonymous with having this module be finitely-generated (the reason why this can reasonably be called `stability' will be explained below). Indeed, we show the following in Theorem \ref{thm:primary-finite-generation}.
\begin{introthm}[\textbf{Finite generation in homology}]\label{thm:intro-primary_stability}
Let $X$ be a separated space, endowed with an almost free action of a finite group $G$, and let $T\subset X$ be a finite $G$-invariant subset. Assume $\dim \Ho_*^{BM}(X)<\infty$ and let $H^{BM}_{d}(X)\neq 0$ be the top nonvanishing Borel-Moore homology group.

For every $i\geq 0$, the cross product
\[
\Ho^{BM}_{d}(X)\otimes \Ho^{BM}_{i}(\Conf^k_G(X,T)) \to \Ho^{BM}_{d+i}(\Conf^{k+1}_G(X,T))
\]
presents the sequence of codimension $i$ homologies $\Ho^{BM}_{d\bullet-i}(\Conf^\bullet_G(X,T))$ as a filtered module over the free GTCA generated by $H^{BM}_{d}(X)$.

If $d\geq 2$, then every one of these modules is finitely generated. 
Explicitly, for every $i\geq 0$ there exist finitely many classes
\[
\alpha_1,\ldots,\alpha_k \in \coprod_{n \in \NN} \Ho^{BM}_{dn-i}(\Conf^n_G(X,T))
\]
whose images under repeated multiplication by $\Ho^{BM}_{d}(X)$ generate $\Ho^{BM}_{dm-i}(\Conf^m_G(X,T))$ as a $\G[m]$-representation for all $m\in \NN$.

Otherwise, when $d=1$, the homology is endowed with a natural muiltiplicative `collision filtration' under which every submodule of bounded filtration degree
\[
F_{p}\Ho^{BM}_{d\bullet-i}(\Conf^\bullet_G(X,T))
\]
is finitely generated.
\end{introthm}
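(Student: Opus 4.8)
The plan is to read the statement off the spectral sequence of Theorem~\ref{thm:ss_factorization} by transferring finite generation from its $E^1$-page to the abutment, the key object being $\mathscr{A}$, the free GTCA generated by $\Ho^{BM}_d(X)$ placed in species-component $1$ and homological degree $d$. First I would record that $\mathscr{A}$ is \emph{Noetherian}: it is a free (symmetric, or exterior when $d$ is odd) twisted commutative algebra on a finite-dimensional $G$-representation concentrated in a single component, and such GTCAs are Noetherian over any Noetherian coefficient ring by the Gr\"obner-basis machinery for twisted commutative algebras \cite{SS-TCA}; hence submodules and subquotients of finitely generated $\mathscr{A}$-modules are again finitely generated.

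Next, the module structure. The coTCA structure on $\Conf_G^\bullet(X,T)$ makes $\Ho^{BM}_*(\Conf_G^\bullet(X,T))$ a GTCA — by Künneth, Borel--Moore homology converts the open inclusions $\Conf^{n+m}_G(X,T)\into\Conf^n_G(X,T)\times\Conf^m_G(X,T)$ into a multiplication, as in \cite{petersen} — and the cross products in the statement are multiplication by $\Ho^{BM}_d(X)\subseteq\Ho^{BM}_d(\Conf^1_G(X,T))$. Since multiplying by a degree-$d$ class in component $1$ preserves the quantity $\codim := d\cdot(\text{component})-(\text{homological degree})$, each codimension-$i$ strand $\Ho^{BM}_{d\bullet-i}(\Conf_G^\bullet(X,T))$ is a module over $\mathscr{A}$. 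I would then upgrade Theorem~\ref{thm:ss_factorization} to a spectral sequence of $\mathscr{A}$-modules: in species-component $1$ the spectral sequence computes $\Ho^{BM}_*(\Conf^1_G(X,T))=\Ho^{BM}_*(X\setminus T)$, and because $T$ is finite the restriction $\Ho^{BM}_d(X)\to\Ho^{BM}_d(X\setminus T)$ is injective (an isomorphism when $d\geq 2$), so the classes of $\Ho^{BM}_d(X)$ are permanent cycles. As the differentials are GTCA-derivations, they therefore vanish on all of $\mathscr{A}$ and are $\mathscr{A}$-linear, so each $E^{r+1}$ is an $\mathscr{A}$-module subquotient of $E^r$ and $E^\infty$ is the associated graded of $\Ho^{BM}_*(\Conf^\bullet_G(X,T))$ — compatibly with the $\mathscr{A}$-action — for the bidegreewise-finite spectral-sequence filtration.

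Now the finiteness, first for $d\geq 2$. By Theorem~\ref{thm:ss_factorization}, $E^1\cong\mathscr{A}\otimes\mathscr{R}$ as $\mathscr{A}$-modules, where $\mathscr{R}$ is the tensor product of $\Ind^{\FB}_{G\times\sym}(\Ho^{BM}_{<d}(X))$, the factors $\Ind^{\FB}_{G\times\sym}(\Ho^{BM}_*(X)\boxtimes\wt\Ho_{n-3}(\ol\Q_n)[\,\cdot\,])$ for $n\geq 2$, and the finitely many factors attached to $T$-orbits and non-free orbits. Since $\dim\Ho^{BM}_*(X)<\infty$ and $\dim\wt\Ho_*(\ol\Q_n)=(n-1)!$ is finite, every species-component of $\mathscr{R}$ is finite-dimensional; and when $d\geq 2$, inspection of the product formula shows each generator of $\mathscr{R}$ lies in codimension $\geq 1$, with the codimension of the $n$-th factor tending to $\infty$. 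Hence in a fixed codimension $i$ only boundedly many generators, of boundedly many types and bounded component, can contribute, so $\mathscr{R}_{\codim=i}$ is finite-dimensional and $(E^1)_{\codim=i}=\mathscr{A}\otimes\mathscr{R}_{\codim=i}$ is a finitely generated $\mathscr{A}$-module. By Noetherianity this passes to $E^\infty_{\codim=i}$ and then, through the finite spectral-sequence filtration, to $\Ho^{BM}_{d\bullet-i}(\Conf^\bullet_G(X,T))$; picking generators in the abutment furnishes the classes $\alpha_1,\dots,\alpha_k$.

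When $d=1$ the $n$-th factor of $\mathscr{R}$ no longer raises codimension and the module need not be finitely generated, so I would instead filter: set $F_p E^1:=\mathscr{A}\otimes\mathscr{R}_{\leq p}$, where $\mathscr{R}_{\leq p}$ is the sum of the species-components of $\mathscr{R}$ of degree $\leq p$ — the part of the product supported on at most $p$ ``collided'' points. This filtration is multiplicative, each $\mathscr{R}_{\leq p}$ is finite-dimensional, and — since the differentials preserve species-component and vanish on $\mathscr{A}$, so that on a monomial of $\mathscr{R}$ they produce a sum of terms of no larger $\mathscr{R}$-degree — each $F_p$ is a subcomplex on every page. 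Thus $F_p E^\infty$ is the associated graded of a multiplicative ``collision filtration'' $F_p\Ho^{BM}_*(\Conf^\bullet_G(X,T))$, and in each fixed codimension $F_p\Ho^{BM}_{d\bullet-i}$ is, through a finite filtration, a subquotient of the finitely generated $\mathscr{A}$-module $\mathscr{A}\otimes(\mathscr{R}_{\leq p})_{\codim=i}$; hence it is finitely generated, and by Noetherianity every one of its $\mathscr{A}$-submodules is finitely generated as well. The step I expect to be the main obstacle is not a single computation but the bookkeeping around the module structure on the spectral sequence — making precise that the GTCA structure of Theorem~\ref{thm:ss_factorization} is rigid enough for $\mathscr{A}$ to act $\mathscr{A}$-linearly on every page and compatibly with the multiplication on the abutment, and that the collision filtration is respected by all differentials, not merely by $d_1$; the numerical inputs on codimensions and on the dimensions of the $\mathscr{R}$-components then follow routinely from the shape of the product formula and the classical computation of $\wt\Ho_*(\ol\Q_n)$.
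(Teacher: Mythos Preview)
Your strategy matches the paper's: show $\mathscr{A}$ acts on every page of the collision spectral sequence, verify finite generation of each codimension-$i$ diagonal at $E^1$, and propagate via Noetherianity. For $d\geq 2$ your direct codimension count is equivalent to the paper's slope-$(-1)$ criterion on the generation locus, and your argument that $\mathscr{A}$ consists of permanent cycles (via injectivity of $\Ho^{BM}_d(X)\to\Ho^{BM}_d(X\setminus T)$) works, though the paper's is one line --- $\mathscr{A}$ lives in column $p=0$ of $E^1$ and every $d_r$ strictly decreases $p$, so vanishes there automatically.

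The $d=1$ case, however, has a gap. The ``collision filtration'' named in the theorem is not the one you construct: it is defined geometrically in the paper (\S\ref{sec:spectral_sequence}), as the filtration by images of $\Ho^{BM}_*$ of partial configuration spaces, and it is precisely the filtration whose associated spectral sequence you are using --- so its degree \emph{is} the first spectral-sequence index $p$. Your filtration $F_pE^1=\mathscr{A}\otimes\mathscr{R}_{\leq p}$ by $\mathscr{R}$-species-degree is a different, algebraically defined filtration on the $E^1$-page (e.g.\ a generator $V_{i,n}$ contributes $n$ to your degree but only $n-1$ to the paper's). The paper's argument is then immediate: bounding collision-filtration degree by $p$ means restricting to the finitely many columns $E^1_{0,*},\ldots,E^1_{p,*}$, each finite-dimensional in a fixed codimension, giving a finitely generated $\mathscr{A}$-module directly. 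Your route instead requires lifting the $\mathscr{R}$-degree filtration from $E^\infty$ back to a filtration on homology; but since this is not the filtration the spectral sequence arose from, the sentence ``$F_pE^\infty$ is the associated graded of a multiplicative collision filtration on $\Ho^{BM}_*$'' has no evident justification, and in any case it would prove a variant of the stated theorem for a different filtration than the one named.
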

This finite generation result vastly extends the known scope of applicability of representation stability: in the early days of this theory Church found a notion of stability for the $\sym$-representations $\Ho^*(\Conf^n(M))$ of connected orientable manifolds of dimension $\geq 2$ \cite{church}, which was later recasted as finite generation of a representation of some category \cite{CEF}.
With this approach, 
Wilson \cite{Wi-FIW}, Kupers--Miller \cite{KM}, and Casto \cite{casto}
studied orbit configuration spaces of free actions and showed that they give rise to similar finitely generated representations of categories, with analogous representation theoretic implications.
More recently, Petersen \cite{petersen} brought the TCA point of view to the study of non-equivariant configurations and extended the finiteness result to general spaces with $\Ho^{BM}_{top}(X)$ of rank $1$ along with further technical assumptions. We see Theorem \ref{thm:intro-primary_stability} above as a conceptual improvement over this body of work in the following ways: 
\begin{itemize}
    \item We discuss \emph{orbit} configuration spaces associated with a $G$-action, perhaps with some points removed and not necessarily with a free action.

    \item While most previous results applied to connected orientable manifolds, our theorem encompasses general spaces that may be singular, disconnected and non-orientable. Only the Borel-Moore homology of the space enters our calculation, and therefore only the proper stable homotopy type.
    Under Poincar\'{e} duality one recovers most known finite-generation results for manifolds (see Remark \ref{rem:duality}).
    
    \item Our analysis elucidates the stability aspects of configuration spaces of graphs (that is, 1-dimensional CW-complexes), which were previously understood to be unstable. While naively the homology does not exhibit stability, it is equipped with a natural filtration by finitely-generated modules. Moreover, unlike all previous work on the subject, only the compactly supported Betti numbers of the graph enter our calculation.
\end{itemize}

Finite generation translates to stability of representations since it demonstrates that all homology groups are subquotients of representations naturally induced from
\[
V\boxtimes \underbrace{\left( H \boxtimes \ldots \boxtimes H\right) }_{k \gg 1 \text{ times}}
\]
where $H = \Ho^{BM}_{d}(X)$ is a fixed $G$-representation and $V$ is drawn from a fixed finite list of $\G$-representations. One then has precise branching rules describing the irreducible decomposition of representations of this form.
The representations stablize in the sense that there is a natural naming scheme on the irreducible representations of the groups $\G$, under which $\Ho^{BM}_{dn-i}(\Conf^n_G(X,T))$ all eventually have the same name (see Theorem \ref{thm:primary-multiplicity}).
For the reader familiar with $\FI$-modules, let us remark that working with GTCAs instead of $\FI$-modules (as in \cite{SS-FIG, KM, casto}) allows us to consider cases in which $H$ above is not the trivial representation.
A more detailed discussion is found in \S\ref{sec:primarystab}.

\subsection{Secondary and higher stability} \label{subsec:intro-secondary}
The finite generation and stability result above is associated only with stabilization by a single term in the infinite product decomposition of Theorem \ref{thm:intro-product}. Introducing the actions of the other terms gives rise to higher-order stabilization. More concretely, 
after understanding the multiplication by $\Ho^{BM}_d(X)$, one may factor it out and inquire as to the remaining classes -- the generators of the modules described in Theorem \ref{thm:intro-primary_stability}. These collections of generators themselves form modules over the GTCAs generated by the remaining stabilization operations, and in some cases these too exhibit stability, namely \emph{secondary stability}.

In \S\ref{subsec:generation_locus} we introduce a new geometric technique for recognizing finitely-generated module structures on a bigraded GTCA, indexed by corners of convex rational polygons. Applying this technique to the product decomposition in Theorem \ref{thm:intro-product} lets us identify a multitude of secondary stabilization operations:
\begin{itemize}
    \item multiplying by increasingly higher codimensional homology $\Ho^{BM}_{d-k}(X)$ -- call this \emph{high dimensional secondary stability}, discussed next in Theorem \ref{thm:intro-secondary_stability};
    \item multiplying by terms of the product decomposition \eqref{eq:intro-product} associated with $n>1$ -- in \S\ref{subsec:orbiting_pair} we identify these as a natural analogue of the Miller-Wilson secondary stability operation of introducing an orbiting pair of points to a configuration \cite{MW}; and
    \item multiplying by low dimensional homology, e.g. $\Ho^{BM}_0(X)$ -- a new sequence of stabilization operations that comes out of our geometric approach and endows the homology with finitely generated module structures, see \S\ref{subsec:new_stability} for details.
\end{itemize}

The caveat is that the product decomposition of Theorem \ref{thm:intro-primary_stability}  applies only to the $E^1$-page in a spectral sequence, and it is difficult in general to control how factoring-out stabilization operations interacts with differentials.
While we expect secondary stability in homology to hold more generally, the most explicit statements we make here are about $i$-acyclic spaces, when all differentials vanish. These include, for example, any space of the form $X\times\R$ and any orientable manifold with trivial cup product on $\Ho^*_c$. In particular the results apply to root system arrangements in their affine and toric variants.
See Definition \ref{def:factor_out_action} on how to extract the generating module, and Remark \ref{rmk:differentials} for a discussion regarding $i$-acyclic spaces.

In high dimensional stabilization for $i$-acyclic spaces, multiplying by $\Ho^{BM}_{d-k}(X)$ for $k=1,2,\ldots$, each operation in turn gives rise to a finitely generated module structure on the module of generators of the previous one. A concrete consequence of this pattern is an increased range of homology generated by stabilization operations, stated in the Theorem \ref{thm:intro-secondary_stability} below.
One can extend this sequence of stabilization operations by using the factors of the product decomposition indexed by $n>1$.
Since the first draft of this paper was released, Ho \cite{Ho} has given a similar sequence of stability operations. He uses a completely different approach which requires rational coefficients, but his results apply to a different generalization of configuration spaces.

\begin{introthm}[\textbf{High dimensional secondary stability}]\label{thm:intro-secondary_stability}
Let $X$ be a separated almost free $G$-space with $\dim \Ho_*^{BM}(X)<\infty$, and let $\Ho^{BM}_{d}(X)\neq 0$ be the top nonvanishing homology group. Assume further that $X$ is $i$-acyclic, i.e. the map $\Ho^*_c(X)\to \Ho^*(X)$ is zero, such as an orientable manifold with trivial cup product on $\Ho^*_c$ or any space of the form $X'\times \RR$. Lastly, pick a finite $G$-invariant subset $T\subset X$.

Fix $k < \frac{d}{2}-1$. Then in the range $j\leq (k+1)n$ the cross products give a surjection
\[
\Ind_{G\times \G[n-1]}^{\G}\left( \bigoplus_{i=0}^k \Ho_{d-i}^{BM}(X)\otimes \Ho^{BM}_{d(n-1)+i-j}(\Conf_G^{n-1}(X,T)) \right)
\onto \Ho_{dn-j}^{BM}(\Conf^n_G(X,T))
\]
Further statements are possible for larger $k$, but we omit them from this discussion.
\end{introthm}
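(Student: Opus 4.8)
The plan is to reduce the whole statement to the product decomposition and then run a bookkeeping argument in the $(\text{arity},\,\text{codimension})$-bigrading. Since $X$ is $i$-acyclic, the spectral sequence of Theorem~\ref{thm:ss_factorization} degenerates and yields an isomorphism of GTCAs
\[
\Ho^{BM}_*(\Conf^\bullet_G(X,T)) \;\cong\; B\otimes C,
\]
where $B$ is the free GTCA generated by $\Ho^{BM}_*(X)$ placed in arity $1$, and $C$ is the tensor product of all the remaining free factors — those indexed by $n\ge 2$ in \eqref{eq:intro-product} together with one free factor for each $G$-orbit of $T$ and each non-free $G$-orbit of $X$ (see Definition~\ref{def:factor_out_action} and Remark~\ref{rmk:differentials}). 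The point of $i$-acyclicity here is exactly that it makes this an isomorphism of \emph{GTCAs}: left multiplication in $B$ then coincides with the cross-product stabilization by $\Ho^{BM}_*(X)$, rather than with an operation only on an $E^1$-page.

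Next I would pass to the bigrading by arity and codimension, assigning $\xi\in\Ho^{BM}_q(\Conf^a_G(X,T))$ the codimension $\codim\xi := da - q$, which is additive under the GTCA product. Under the isomorphism above the maps $\Ho^{BM}_{d-i}(X)\otimes(-)$ in the statement are precisely multiplication by the arity-$1$, codimension-$i$ summand of the generating object of $B$. Fixing homogeneous bases of $\Ho^{BM}_*(X)$ and of the generating objects of the free factors of $C$ produces a monomial basis of $B\otimes C$, and a monomial lies in the image of $\bigoplus_{i\le k}\Ho^{BM}_{d-i}(X)\otimes\Ho^{BM}_*(\Conf^{n-1}_G(X,T))$ under the cross products exactly when it involves at least one generator of $B$ of codimension $\le k$. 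Since the span of such monomials is a $\G$-subrepresentation onto which the induced module of the statement maps $\G$-equivariantly, it suffices to prove: every monomial of arity $n$ and codimension $j$ in the asserted range involves a generator of $B$ of codimension $\le k$.

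So suppose some arity-$n$, codimension-$j$ monomial does not: then it involves $p$ generators of $B$, each necessarily of codimension $\ge k+1$, together with generators of $C$ of arities $a_1,\dots,a_t\ge 1$ with $p+\sum a_\ell = n$. By the explicit bidegrees recorded in Theorem~\ref{thm:ss_factorization}, a generator of $C$ of arity $a$ has codimension at least $(d-1)(a-1)$ when it comes from a factor with $a\ge 2$ (the minimum is achieved on the top-homology summand $\Ho^{BM}_d(X)$), and at least $d-1$ when it comes from an orbit of $T$ or a non-free orbit; in either case its codimension is $\ge\tfrac{d-1}{2}a$. Summing, $j=\codim\ge(k+1)p+\tfrac{d-1}{2}(n-p)$, which already exceeds $(k+1)n$ whenever $n>p$ and $k\le\tfrac{d-3}{2}$; the complementary case — a pure product of generators of $B$ — has $j\ge(k+1)n$ with equality only for a product of $n$ generators each of codimension exactly $k+1$. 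To push this crude count to the full range $k<\tfrac d2-1$ (an improvement that matters only when $d$ is odd) and to treat the codimension-$(k+1)n$ boundary, one replaces it by the sharper convex-polygon form of the generation-locus criterion of \S\ref{subsec:generation_locus}. Once every monomial of the relevant bidegrees is accounted for, the cross-product map is surjective, and the whole construction is manifestly $G^n\rtimes\sym$-equivariant.

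The main obstacle, as this sketch makes clear, is the bidegree bookkeeping for $C$: the factors attached to removed points of $T$ and to non-free orbits are governed by links and quotient singularities rather than by punctured affine spaces, and one must locate their generators in the $(\text{arity},\,\text{codimension})$-plane precisely enough that the convexity estimate delivers the optimal threshold $k<\tfrac d2-1$ — and handles the boundary codimension cleanly — which is exactly the situation the geometric machinery of \S\ref{subsec:generation_locus} is designed for. A secondary but essential point, already isolated above, is to verify that the decomposition $\Ho^{BM}_*(\Conf^\bullet_G(X,T))\cong B\otimes C$ is one of GTCAs and not merely of bigraded $\G$-representations, since only then does multiplication on the left compute the cross products on the right.
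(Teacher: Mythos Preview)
Your approach is essentially the paper's: reduce to the product decomposition on the $E^1$-page and then do bidegree bookkeeping. The paper packages the bookkeeping via the generation-locus machinery of \S\ref{subsec:generation_locus} and Corollary~\ref{cor:i-acyclic}, while you attempt a direct monomial count; the two amount to the same thing, and indeed your estimate $\codim\ge\tfrac{d-1}{2}a$ for generators of $C$ is exactly the taxi-cab norm bound encoded in Figure~\ref{fig:genlocus}.

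There is, however, a genuine gap in your setup. You assert that $i$-acyclicity yields an isomorphism of \emph{GTCAs} $\Ho^{BM}_*(\Conf^\bullet_G(X,T))\cong B\otimes C$, and you flag this as something to be verified. But the paper explicitly says (Remark~\ref{rmk:differentials}) that this is \emph{not known}: degeneration gives only $E^1=E^\infty=\operatorname{gr}^F\Ho^{BM}_*$, an isomorphism of associated graded objects, and the extension problem for the multiplicative structure is open. Your last paragraph therefore asks for something that cannot currently be delivered, and your argument as written depends on it.

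The fix is that you don't need the GTCA isomorphism. The cross products $\Ho^{BM}_{d-i}(X)\otimes\Ho^{BM}_*(\Conf^{n-1})\to\Ho^{BM}_{*+d-i}(\Conf^n)$ are already defined on homology, they respect the collision filtration (the factor $\Ho^{BM}_{d-i}(X)$ lies in $E^1_{0,d-i}[1]$, filtration degree $0$), and on the associated graded they become exactly the $B$-multiplication in $E^1$. So run your monomial count on $E^1$ to get surjectivity there; since the collision filtration on each $\Ho^{BM}_*(\Conf^n)$ is bounded, surjectivity on associated graded lifts to surjectivity on homology by the standard filtration argument. This is precisely how the paper proceeds in the proof of Corollary~\ref{cor:i-acyclic} (``compatible with the $A^n_i$-module action on $E^1$ up to extensions'') and then in Remark~\ref{rmk:high-stability}. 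Incidentally, your crude count already matches the stated range for integer $k$ --- the conditions $k\le\tfrac{d-3}{2}$ and $k<\tfrac{d}{2}-1$ agree on integers --- so your worry about odd $d$ is unfounded; only the boundary codimension $j=(k+1)n$ needs the extra care you mention.
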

These operations turn out to behave increasingly poorly when approaching the middle homological dimension of $X$, setting the case $d=1$ of graphs as now part of a broad phenomenon -- see Example \ref{ex:high_dimensions} for details. Corollary \ref{cor:i-acyclic} expands on the statement of Theorem \ref{thm:intro-secondary_stability} and includes effective bounds on the generation degrees in special cases. We prove Theorem \ref{thm:intro-secondary_stability} as a corollary in Remark \ref{rmk:high-stability}.
 
\begin{figure}[ht]
\begin{tikzpicture}[scale=.6]
\node at (1.1,7.3) {0};
\node at (3,1) {$\dots$};
\draw[-latex] (0,0)--node[below,pos=.9] {$n$} (3+4/3,0);
\draw[-latex] (0,0)--node[left,pos=.95] {$*$} (0,9);
\foreach \c in {0,1,2,3,4}
{
\draw[thick] (\c/3,0)--(\c,\c+\c);
\draw[-latex,thick,opacity=.3,color=blue] (\c,\c+\c)--(3+\c/3,9);
};
\draw[opacity=0] (4/3,0)--node[below,sloped,opacity=1]{\scriptsize$*=dn-j$}(4,8);
\draw[opacity=0] (0,0)--node[above,sloped,opacity=1]{\scriptsize$*=dn$}(3,9);
\node at (8,4.5) {$\Ho_*^{BM}(\Conf^n_G(X,T))$};
\end{tikzpicture}
\hspace{5mm}
\begin{tikzpicture}[scale=.6]
\node at (1.1,7.3) {0};
\node at (5,1) {$\dots$};
\draw[-latex] (0,0)--node[below,pos=.93]{$n$}(6.5,0);
\draw[-latex] (0,0)--node[left,pos=.95]{$*$}(0,9);
\draw[opacity=0] (0,0)--node[above,sloped,opacity=1]{\scriptsize$*=dn$}(3,9);
\foreach \c in {0,1,2,3,4}
{
\draw[-latex,thick,opacity=.3,color=blue] (\c,\c+\c)--(3+\c/3,9);
};
\foreach \c in {0,1,2,3,4}
{
\draw[thick] (\c/2,0)--(\c,\c);
\draw[-latex,thick,opacity=.5,color=red] (\c,\c)--(4.5+\c/2,9);
};
\draw[opacity=0] (2,0)--node[below,sloped,opacity=1]{\scriptsize$*=(d-1)n-j$}(6.5,9);
\end{tikzpicture}
\caption{Visualizing Theorem \ref{thm:intro-secondary_stability} for  $\Ho^{BM}_*(\Conf^n_G(X,T))$ with $k=0$ (left) and $k=1$ (right). Blue indicates the primary stability range, red indicates the next stability range, and black indicates generators for stabilization by $\Ho_d^{BM}(X)$ (left) or by $\Ho^{BM}_d(X)$ and $\Ho^{BM}_{d-1}(X)$ (right).}
\label{fig:secstab}
\end{figure}
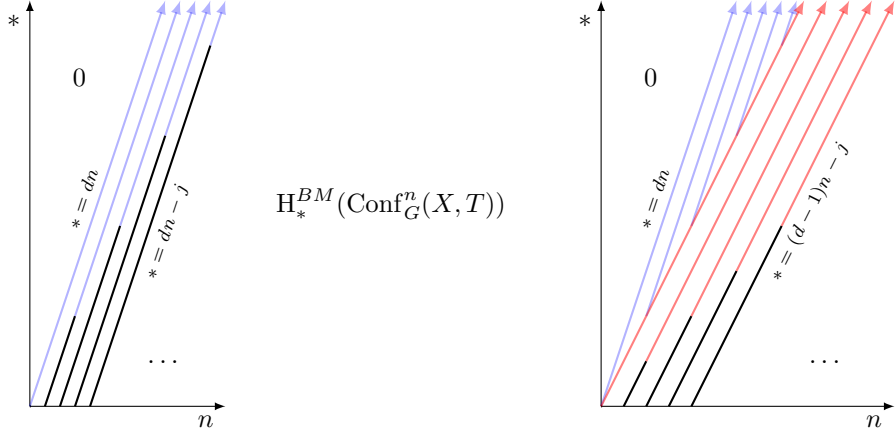

As a particular case of Theorem \ref{thm:intro-secondary_stability}, observe that if the homology is known to vanish in a range: $\Ho^{BM}_{d-1}(X),\ldots, \Ho^{BM}_{d-k}(X)=0$ for $k<\frac{d}{2}-1$, then the primary stability operation will in fact generate $\Ho^{BM}_{dn-j}(\Conf^n_G(X,T))$ in the improved range $j\leq (k+1)n$.
\begin{example}
Fix numbers $k < \frac{d}{2}-1$ and let $M'$ be a $(d-1)$-manifold with finitely generated homology such that $H_1(M')=\ldots=H_k(M')=0$ and an almost free $G$-action. Then for the $d$-manifold $M=M'\times \RR$, the homology $\Ho^{BM}_{dn-j}(\Conf^n_G(M))$ is generated under the $\G$-action by classes of the form $[M]\times \alpha$ for $\alpha\in \Ho^{BM}_{d(n-1)-j}(\Conf^{n-1}_G(M))$ in the range $j\leq (k+1)n$.

Via Poincar\'e duality, this translates to the standard context of cohomological representation stability:
$\Ho^j(\Conf^n_G(M)) \to \Ho^j(\Conf^{n+1}_G(M))$
associated with the maps forgetting a point from a configuration. Here one gets a much improved stable range of $j\leq (k+1)n$ (compare with Church's \cite[Proposition 4.1]{church} and Tosteson's \cite[Examples 1.4 and 1.6]{tosteson}).
\end{example}

\subsection{Conventions}
\label{conventions}
We carry several conventions throughout this paper, which we state here.
The term \textit{separated space} will refer to either
\begin{itemize}
\item a locally compact Hausdorff topological space or 
\item a separated scheme of finite type over some algebraically closed field.
\end{itemize}
A group action on a set $G\curvearrowright X$ is \emph{almost free} if there is some finite subset $S\subseteq X$ for which $G$ acts freely on $X\setminus S$. 
An \emph{almost free} $G$-\textit{space} is a separated space equipped with an almost-free action of a finite group $G$. 

Throughout, we discuss Borel-Moore homology with various coefficients, but we shall typically suppress the coefficients from the notation. One key restriction on coefficient systems is that they satisfy the K\"unneth isomorphism for powers:
\begin{equation}
\label{eq:kunneth}
\Ho^{BM}_n(X\times X^k) \cong \bigoplus_{i=0}^n \Ho^{BM}_i(X)\otimes\Ho^{BM}_{n-i}(X^k),
\end{equation}
for a separated space $X$ and $k\geq 1$. Therefore we consider homology with coefficients in $\K$ that is either
\begin{itemize}
    \item a field,
    \item a Noetherian ring over which $\Ho^{BM}_*(X)$ is a projective module, or
    \item a sheaf of $R$-algebras on $X$ for which $\Ho^{BM}_*(X;\K)$ is projective over a Noetherian ring $R$.
\end{itemize}

Lastly, when $X$ is a scheme and $\K$ is a sheaf of commutative algebras, the Borel-Moore homology $\Ho^{BM}_*(X;\K)$ denotes the \'etale hypercohomology in negative degrees
\[
\Ho^{BM}_*(X;\K) := \mathbb{H}^{-*}_{\acute{e}t}(X;\mathbb{D}\K)
\]
where $\mathbb{D}\K$ denotes the Verdier dual complex.

\subsection{Outline of this paper}
Since this work is directed towards different research communities -- topology, representation stability and algebraic combinatorics -- each comfortable with different common notions, we attempt to make it accessible to all by including proper introduction to the central combinatorial objects and operations as well as explicit examples. The reader is encouraged to skip any and all of those whenever deemed unnecessary.

\S\ref{sec:tca} is a review of basic definitions, examples, constructions, and operations on $G$-twisted commutative algebras (GTCAs).
\S\ref{sec:dowling} contains the main combinatorial and topological inputs, that culminate in the proof of the product decomposition of Theorem \ref{thm:intro-product}.

Lastly, \S\ref{sec:repstability} discusses representation stability: \S\ref{subsec:generation_locus} introduces simple geometric tests for finite generation of a bigraded module over a GTCA -- our primary technique behind our stability results. 
In \S\ref{sec:primarystab} we establish Theorem \ref{thm:intro-primary_stability} on finite generation and multiplicity stability with respect to the primary stabilization operation.
The section also contains our treatment of higher representation stability and ends by identifying new forms of stability that have previously gone unnoticed to the best of our knowledge.

\subsection{Acknowledgements} The authors would like to thank Jeremy Miller and Jenny Wilson for many useful comments.

\section{G-Twisted commutative algebras} \label{sec:tca}
This section recounts definitions and operations on twisted commutative algebras (TCAs), also extending the theory to the equivariant context. We do this at some length since the notion of a TCA is, as of this time, still not standard, but refer the reader to \cite{SS-TCA} as a general reference.
One of the main points is that the collection of all orbit configuration spaces $\Conf_G^n(X,T)$ can be treated as a single mathematical object with an algebraic structure. 
This perspective offers a framework in which orbit configuration spaces and related structures can be described succinctly, which we will later exploit.

Throughout this section, let $G$ be a finite group.

\subsection{Definitions}
Let $\FB$ be the category of all \textbf{f}inite sets $I$ and $G$-\textbf{b}ijections; that is, functions $f:I\overset{\sim}{\to}J$ along with a ``coloring" $g:I\to G$. The composition rule for $I\to J \to K$ is given by first pulling back the $G$-coloring from $J$ to $I$ and then multiplying the two pointwise:
$$
I\overset{(f,g)}{\longrightarrow}J \overset{(f',g')}{\longrightarrow} K \quad \mapsto \quad (f'\circ f, (g'\circ f)\cdot g).
$$
Note that every morphism in $\FB$ is invertible, and the automorphism group of a set $I$ with $|I|=n$ is isomorphic to the wreath product of $G$ with the symmetric group, $G^n\rtimes \sym$. Denote the automorphism group of $I$ by $\G[I]$ and let $[n]= \{1,\ldots,n\}$ for every $n\in \NN$.
\begin{remark}
An equivalent description of $\FB$, or rather an equivalent category, is the category of finite free $G$-sets and $G$-equivariant bijections between them. Sending a finite set $I$ to the free $G$-set $G\times I$ gives an equivalence with the definition above. In particular, one can consider the free $G$-sets $G\times[n]$ and stick to thinking in these natural terms. But below we prefer the definition as stated above since it helps us identify induced representations of this category.
\end{remark}

$\FB$ is symmetric monoidal, with monoidal product given by disjoint union $I\otimes J := I\sqcup J$ under the obvious action on morphisms and monoidal unit being $\emptyset$. This product gives the usual ``block diagonal" embeddings $\G[n]\times\G[m]\hookrightarrow \G[n+m]$. 

In everything that follows, we will be interested in functors $\left[ \FB,\Cat \right]$ into various categories $\Cat$. Now, it is often the case that functor categories between symmetric monoidal categories are themselves symmetric monoidal. Therefore in that context one can discuss algebra objects $A\otimes A\to A$ and modules $A\otimes M \to M$. With $\FB$ these take the following explicit form.
\begin{definition}[\textbf{$G$-twisted commutative algebra, GTCA}]
	A $G$-\emph{twisted commutative algebra} (GTCA for short) in a symmetric monoidal category $(\Cat,\otimes,\mathds{1})$ is a lax monoidal functor $A_\bullet:\FB\to \Cat$ (also denoted $A[\bullet]$). That is, an assignment $A_I$ for every finite set $I$, and for every pair $I$ and $J$ a map
	$$
	A_I \otimes A_J \to A_{I\sqcup J}
	$$
	which is $\G[I]\times \G[J]$-equivariant and satisfies appropriate associativity, symmetry and unit axioms. 
    We shall insist that our GTCAs be unital in the sense that $A_{\emptyset} = \mathds{1}$, the monoidal unit, and the multiplication map $A_{\emptyset}\otimes A_I \to A_I$ is the unit map.

	Analogously, a $G$-\emph{twisted commutative coalgebra} (co-GTCA for short) is a GTCA in the opposite category $\Cat^{op}$. This is a contravariant functor $A^\bullet$ along with compatible structure maps
	$$
	A^{I\sqcup J} \rightarrow A^{I}\otimes A^J.
	$$
\end{definition}

\begin{definition}[\textbf{Modules over GTCAs}]
A module over a GTCA $A_\bullet$ is a functor $M_\bullet: \FB\to \Cat$ equipped with maps
\[
A_I\otimes M_J \to M_{I\sqcup J}
\]
satisfying the predictable axioms.
\end{definition}

\begin{remark}\label{rmk:nskeletal}
The objects of the form $[n]$, where $n\in\NN$, along with $\emptyset$, form a skeletal subcategory of $\FB[G]$. We will often discuss a GTCA $A_\bullet$ (or similarly a module over $A_\bullet$) as a functor on this subcategory, abbreviating \[A_n:=A_{[n]}.\]
In this notation, one can view a GTCA as a graded algebra $A_\bullet\sim\coprod_n A_n$ equipped with an action of $\sym[n][G]$ on $A_n$ for all $n\in \NN$, so that the multiplication is suitably equivariant.
\end{remark}

\begin{remark}[\textbf{Combinatorial species}]
Forgetting the algebra structure, one obtains a well-known object called a combinatorial species. Species were originally studied as a categorification of exponential generating functions, as there are operations on species which correspond to adding, multiplying, and composing generating functions. Species were extended to the equivariant setting by Henderson \cite{henderson}, using an equivalent category to $\FB$ whose objects are finite sets equipped with free $G$-actions and whose morphisms are $G$-equivariant bijections.
Henderson even studied some of the combinatorial species we encounter in this paper, but we study them with their natural algebra structure which we exploit in Section \S\ref{sec:repstability}.
\end{remark}

One of the goals of this section is to give reasonable descriptions of particular GTCAs and their modules, and describe how to ``generate'' them from simpler or finitely many objects.

\begin{definition}[\textbf{Finite generation}]
We say that a GTCA $A_\bullet$ is \emph{finitely generated} if there exists a finite set $S\subseteq \coprod_I A_I$ such that no proper sub-GTCA contains $S$. 
Similarly, a module $M_\bullet$ over a GTCA $A_\bullet$ is \emph{finitely generated} if there exists a finite subset of $\coprod_J M_J$ which is not contained in any proper submodule.
\end{definition}

\subsection{Examples}
This subsection will be devoted to important and relevant examples and constructions of (co-)GTCAs.

\begin{example}[\textbf{The exponential GTCA}] \label{ex:comm}
	Let $(\Cat,\otimes,\mathds{1})$ be a monoidal category, such as topological spaces, posets or sets, all with their Cartesian product. The \emph{exponential} GTCA, denoted by $\mathds{1}_\bullet$, is the one sending every set $I$ to $\mathds{1}$ and every morphism to the identity map. The monoid action is given by the canonical unit morphism $\mathds{1}\otimes \mathds{1} \rightarrow \mathds{1}$.
	
	In the case of a trivial group $G=1$, it is known that modules over $\mathds{1}_\bullet$ are the same thing as $\FI[]$-modules: representations of the category $\FI[]$ of finite sets and all injective maps between them. It is in the context of these objects that Church-Ellenberg-Farb began their work on representation stability. For example, they show in \cite{CEF} that the cohomology of configuration spaces of a connected, orientable manifold of dimension $n\geq 2$ is a finitely generated $\FI[]$-module, and derive explicit representation theoretic conclusions from this fact.
	
	In the case of a general group $G$, a module over $\mathds{1}_\bullet$ is what's known in the literature as an $\FI$-module: representations of the category of finite, free $G$-sets and equivariant injections. In special cases the representation theory of such categories has been studied by Wilson \cite{Wi-FIW}, and the general case was worked out by Sam-Snowden \cite{SS-FIG}. When $G$ is a finite group, $\FI$-modules enjoy a similar theory of representation stability, with numerous applications to both algebra and topology (see the two references mentioned in this paragraph).
\end{example}

\begin{example}\label{ex:starcheck}
In the monoidal category $(\Set,\times,*)$ we have the exponential GTCA $(*)_\bullet$ as in Example \ref{ex:comm}. A related GTCA of sets, denoted by $(\check{*})_\bullet$, is obtained in the same way with the exception of having the value $\emptyset$ in degree 1. 
This GTCA will be a key ingredient for the GTCA we will analyze in \S\ref{sec:dowling}.
\end{example}

The following set of examples make up our motivation to consider GTCAs in the first place.
\begin{example}[\textbf{The power co-GTCA}]\label{ex:powers}
	Given a space $X$ with an action of $G$, the \emph{power co-GTCA} of $X$ is
	\[
	X^\bullet: I\mapsto X^I \cong \underset{|I| \text{ many times}}{X \times \ldots \times X} \quad,\qquad X^\emptyset = \{*\}.
	\]
Explicitly, a point $x\in X^I$ is a function $x:I\to X$, and a morphism $(f,g):I\to J$ acts on $y\in X^J$ by sending it to
	\[
	(f,g).y:= g^{-1}.f^*(y): a\mapsto g(a)^{-1}.y(f(a)).
	\]
	Two sets $I$ and $J$ give a canonical isomorphism
	$$
	X^{I\sqcup J} \overset{\sim}{\rightarrow} X^I\times X^J
	$$
	satisfying all compatibility axioms. Clearly, the same definition works with $X$ in any symmetric monoidal category. Furthermore, note that since all maps in this example are isomorphisms, they can be inverted to get a GTCA instead.
\end{example}

Next we observe that orbit configuration spaces form a co-GTCA as well.
\begin{example}[\textbf{Orbit configuration spaces}] \label{ex:configs}
	Recall the spaces $\Conf_G^n(X,T)$ defined in \eqref{eq:orbitconfig} in \S\ref{sec:intro-punctures}. As we let $n$ vary and range over all finite sets, the collection of these spaces (for a fixed $X$ and $T$) has the structure of a co-GTCA: it can be described as the maximal sub-co-GTCA $\Conf_G^\bullet \subset X^\bullet$ in which $\Conf_G^{[1]}\subset X$ is the complement of $T$ and $\Conf_G^{[2]}\subset X^2$ is disjoint from the diagonal $\Delta\subset X^2$.
	To see that the collection indeed forms a sub-co-GTCA one only needs to observe $\Conf_G^{[n+m]}$ is identified with an open subspace of $\Conf_G^{[n]}\times \Conf_G^{[m]}$ inside $X^{n+m}$, given by imposing more inequalities of the form $x_i\neq g.x_j$.
\end{example}

\begin{example}[\textbf{Borel-Moore homology}]\label{ex:BM}
Consider a co-GTCA $A^\bullet$ of topological spaces for which the maps $A^{I\sqcup J}\to A^I\times A^J$ are open inclusions. One such example is the co-GTCA of orbit configuration spaces in Example \ref{ex:configs}.
Since Borel-Moore homology (with coefficients as in Conventions \ref{conventions}) is a contravariant functor on open inclusions, $\Ho_*^{BM}(A^\bullet)$ is a GTCA of graded $\K$-modules. The GTCA $\Ho_*^{BM}(\Conf_G^\bullet(X,T))$ will be the primary focus of our study in later sections.
\end{example}

Before we proceed to our main examples, we describe a number of constructions and operations on GTCAs that make the treatment of them more formal and streamlines the pursuing discussions. These are all special cases of the left Kan extension, and they all work as expected. We include a detailed account for completeness.

\subsection{Induced GTCAs}
The power GTCA from Example \ref{ex:powers} above is a special case of induced GTCAs, which are central to our study of the combinatorics of orbit configuration spaces.

	Fix some $n\in \NN$. Then the restriction
\[A_\bullet \mapsto A_n
\]
associates to every GTCA $A_\bullet$ an object with an action of the wreath product group $\G=G^n\rtimes\sym$. This restriction functor often has a left adjoint -- the induction, denoted by $\Ind^{\FB}_{(n)}$. We construct this induction explicitly in the case of $\K$-modules. The same construction works for sets, posets, topological spaces and so on.

Let $V$ be a representation of $\G$. Then the tensor powers $V^{\otimes \bullet}$ naturally form a $\G$-TCA (with base group $\G$ instead of $G$). The category $\FB[\G]$ is a monoidal subcategory of $\FB[G]$ under the functor 
\begin{equation}\label{eq:spacingfunctor}
    \iota_{\times n}:[k]\mapsto [k\times n].
\end{equation}
The left Kan extension along $\iota_{\times n}$ acquires a GTCA structure and is the sought after induced module $\Ind^{\FB}_{(n)}(V)_\bullet$. More explicitly, the wreath product $\sym[k][\G]=\sym[k][\sym[n][G]]$ is a subgroup of $\G[kn]=\sym[kn][G]$ and we have
\begin{equation}
\Ind^{\FB}_{(n)}(V)_{kn}=\Ind_{\sym[k][\G]}^{\G[kn]} V^{\otimes k}
\end{equation}
while $\Ind^{\FB}_{(n)}(V)_{r} = 0$ unless $n | r$. When working in other categories, the object $0$ must be replaced with the initial object $\emptyset$.

\begin{remark}
\
\begin{enumerate}
\item
In the case where $G$ is trivial and $V$ is a representation of $\sym$, Sam and Snowden \cite{SS-TCA} denote the TCA $\Ind^{\FB[]}_{(n)}(V)_\bullet$ by $\operatorname{Sym}(V)$. The idea is to generate a \textit{free} GTCA, similar to how one generates a polynomial ring, with $V$ in degree $n$.
\item
Recall that a symmetric monoidal category is equipped with isomorphisms $A\otimes B\to B\otimes A$, for any two objects $A$ and $B$. When describing the induction above, the symmetric group  $\sym[k]$ permutes the tensor factors of $V^{\otimes k}$ using this isomorphism. In particular, when working with graded modules, the switching operation involves $(-)$ signs that makes the GTCA a graded-commutative algebra.
\end{enumerate}
\end{remark}

\begin{example}[\textbf{Set partitions}]\label{ex:partitions}
Consider the case where $G=1$ and construct the GTCA (really a TCA) induced by the trivial $\sym$-set $*$. Its value on $[k\times n]$ is the induced set
\[
\Ind^{\sym[k\times n]}_{\sym[k][\sym[n]]}(*) \cong \sym[kn]/\sym[k][\sym].
\]
To identify this set consider the collection of set partitions of $[k\times n]$ into blocks of size $n$. The group $\sym[kn]$ permutes these permutations transitively, and the stabilizer of the partition 
\[
\left\{[1,\ldots,n], [n+1,\ldots, 2n],\ldots,[(k-1)n+1,\ldots,kn]\right\}
\]
is the wreath product $\sym[k][\sym[n]]$. Therefore the induced TCA assigns to a set  all the possible ways to partition it into blocks of size $n$.

When $n=1$, this is simply the exponential TCA from Example \ref{ex:comm}: $\Ind_{(n)}^{\FB[]} * = (*)_\bullet$.
\end{example}

A well-known characterization of induced $G$-representations is by a direct sum decomposition $\oplus _{x\in X} V_x$ on which $G$ acts by permuting the summands transitively. An analogous useful characterization for induced GTCAs is given by the following, whose proof is a straightforward generalization of the case of induced group representations.
    \begin{fact}[\textbf{Induction characterization}] 
    A GTCA $A_\bullet$ is induced from $A_n$ if and only if there exists a GTCA of sets $X_\bullet$, induced from a transitive $\G$-set $X_n$, and the following holds: for all $k\in \NN$ one has 
    $$\displaystyle{A_k = \coprod_{x\in X_k} A_x}$$
for some objects $A_x$, and the category $\FB$ acts on $A_\bullet$ in a way compatible with this decomposition. That is:
    \begin{itemize}
    \item Every morphism $f:[k]\to[k]$ of $\FB$ permutes the summands according to its action on $X_\bullet$, i.e. $f(A_x) = A_{f(x)}$.
    \item The products of $A_\bullet$ define isomorphisms $A_x\otimes A_y \overset{\sim}{\longrightarrow} A_{x\cdot y}$.
    \end{itemize}
    \end{fact}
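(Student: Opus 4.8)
The plan is to reduce the statement to the classical characterization of induced group representations by working component-by-component, using the known identification of $\Ind^{\FB}_{(n)}$ as a left Kan extension along the monoidal functor $\iota_{\times n}$ of \eqref{eq:spacingfunctor}. First I would unwind the forward direction: if $A_\bullet$ is induced from $A_n$, then by the explicit formula $A_{kn} = \Ind^{\G[kn]}_{\sym[k][\G]} A_n^{\otimes k}$ and $A_r = \emptyset$ for $n \nmid r$. Take $X_\bullet$ to be the GTCA of sets induced from the transitive $\G$-set $X_n := \G/\sym[k][\G]$ evaluated appropriately — more precisely, $X_\bullet := \Ind^{\FB}_{(n)}((*)_{\bullet})$ in the relevant base-group sense, so that $X_{kn} = \G[kn]/\sym[k][\G]$ is the set of ordered partitions of $[kn]$ into $k$ blocks of size $n$ (as in Example \ref{ex:partitions}). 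Then the classical decomposition of an induced representation $\Ind^{\G[kn]}_{\sym[k][\G]} A_n^{\otimes k} = \bigoplus_{x \in X_{kn}} A_x$, where $A_x$ is the copy of $A_n^{\otimes k}$ indexed by the coset $x$, gives exactly the required coproduct decomposition; the two bullet conditions are then just a restatement of how $\FB$-morphisms and the monoidal product act on a Kan extension, which one reads off from the construction.

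For the converse — the substantive direction — suppose $A_\bullet$ admits such a decomposition over an induced GTCA of sets $X_\bullet$. I would first observe that $X_\bullet$ being induced from the transitive $\G$-set $X_n$ forces $X_r = \emptyset$ for $n \nmid r$ (by the induction formula for sets), hence $A_r = \emptyset$ there too, matching $\Ind^{\FB}_{(n)}$. For $r = kn$: transitivity of the $\G[kn]$-action on $X_{kn}$ means $X_{kn} \cong \G[kn]/\Gamma$ for $\Gamma$ the stabilizer of a chosen basepoint $x_0$; I claim $\Gamma = \sym[k][\G]$ after a suitable identification. The key input is the second bullet: the isomorphisms $A_x \otimes A_y \xrightarrow{\sim} A_{x \cdot y}$, combined with the first bullet, let one identify $x_0$ with a $k$-fold "product" point built from the degree-$n$ generator, whose stabilizer in $\G[kn]$ is precisely the wreath product permuting the $k$ factors — this is where the hypothesis that $X_n$ is transitive and the compatibility of the product with the $\FB$-action get used in tandem. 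Granting this, $A_{kn} = \bigoplus_{x \in \G[kn]/\sym[k][\G]} A_x$ with $\G[kn]$ permuting summands transitively and each $A_{gx_0} \cong g \cdot A_{x_0}$, and the multiplicativity forces $A_{x_0} \cong A_n^{\otimes k}$ as a $\sym[k][\G]$-object (the $\sym[k]$ permuting factors with Koszul signs, the factors of $G$ acting through the respective tensor slots). This is exactly the data of $\Ind^{\G[kn]}_{\sym[k][\G]} A_n^{\otimes k}$, and checking that the structure maps agree with those of the Kan extension is then formal. Finally I would verify naturality — that the isomorphism $A_\bullet \cong \Ind^{\FB}_{(n)}(A_n)$ is compatible with the GTCA multiplication — which again follows from the second bullet since both sides' products are determined by the degree-$n$ piece.

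The main obstacle is the identification $\Gamma = \sym[k][\G]$ and, more precisely, pinning down the $\sym[k][\G]$-module structure on $A_{x_0}$ as $A_n^{\otimes k}$: one must argue that the compatibility of the product $A_x \otimes A_y \to A_{x\cdot y}$ with the $\FB$-action is rigid enough to force the basepoint of $X_{kn}$ to be the "standard" block partition and the summand over it to be the $k$-fold tensor power with the correct equivariance. This is genuinely the crux — it is the exact analogue of recognizing, for a plain group representation $W$ with a transitively-permuted direct-sum decomposition $W = \bigoplus_{x} W_x$, that $W \cong \Ind^G_{\Gamma} W_{x_0}$ — and as the statement itself notes, the proof is "a straightforward generalization of the case of induced group representations." Everything else (the forward direction, the vanishing in degrees not divisible by $n$, naturality in the GTCA structure) is bookkeeping once the Kan extension formula of \eqref{eq:spacingfunctor} is in hand.
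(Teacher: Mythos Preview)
The paper does not actually give a proof of this Fact; it only remarks that ``the proof is a straightforward generalization of the case of induced group representations.'' Your proposal follows exactly this suggested route and is essentially correct.

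One small point to tighten: in the converse direction you assert that the stabilizer $\Gamma$ of the basepoint $x_0 \in X_{kn}$ equals $\sym[k][\G]$, but this is only true when the transitive $\G$-set $X_n$ is a single point. In general, if $X_n \cong \G/H$ with basepoint $\xi_0$ of stabilizer $H\leq \G$, then the stabilizer of $x_0 = \xi_0^{\cdot k}$ in $\G[kn]$ is the smaller wreath product $\sym[k][H]$, and correspondingly $A_{x_0} \cong A_{\xi_0}^{\otimes k}$ rather than $A_n^{\otimes k}$. This does not affect the conclusion: either observe that one may coarsen the indexing GTCA to the one induced from the one-point $\G$-set (push the decomposition forward along $X_n \to *$, preserving both compatibility bullets), reducing to the case you treat; or carry the general stabilizer through and use transitivity of induction, $\Ind^{\G[kn]}_{\sym[k][H]} A_{\xi_0}^{\otimes k} \cong \Ind^{\G[kn]}_{\sym[k][\G]}\bigl(\Ind^{\G}_{H} A_{\xi_0}\bigr)^{\otimes k} = \Ind^{\G[kn]}_{\sym[k][\G]} A_n^{\otimes k}$. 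With this adjustment your argument goes through as written.
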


\subsection{Change of group}
	Another operation that plays an important role in our combinatorial analysis arises when changing the underlying group $G$.	Given a subgroup $H\leq G$, there is a natural inclusion of wreath products $\sym {}[H] \leq \sym{}[G]$ giving rise to a monoidal inclusion functor
    \begin{equation}\label{eq:groupchangefunctor}
    \iota_H^G:\FB[H] \hookrightarrow \FB.
    \end{equation}
Restricting a GTCA along this inclusion yields an HTCA. As with the previous operation, this restriction often has a left adjoint given by the left Kan extension and denoted here by $\Ind_{\FB[H]}^{\FB}$. An explicit description of this extension is given for every $n$ by
\begin{equation} \label{eq:change_group}
\left(\Ind_{\FB[H]}^{\FB}A\right)_n = \Ind_{\sym{}[H]}^{\sym{}[G]} (A_n) \cong \Hom_{\FB}(\iota^G_H [n],[n]) \times_{\sym{}[H]} A_n.
\end{equation}
The product operation for the GTCA $(\Ind_{\FB[H]}^{\FB} A)_\bullet$ is the obvious one given by $\coprod$ on the left factor along with the product of the HTCA $A_\bullet$ on the right factor.

\begin{example}[\textbf{Colorings}]\label{ex:colorings}
Let $H\leq G$ be a fixed subgroup. The set of $(G/H)$-colorings on $[n]$, i.e. the set of functions $[n]\to G/H$, admits a transitive action of the wreath product group $\sym{}[G]$, and the stabilizer of the constant function $k\mapsto 1H$ is the wreath product group $\sym{}[H]$. Therefore the set of all such colorings on $[n]$ is the induced set $\Ind^{\sym{}[G]}_{\sym{}[H]}(*)$.

Now consider the exponential HTCA of sets $(*)_\bullet$ from Example \ref{ex:comm}. The induced GTCA $\Ind_{\FB[H]}^{\FB}(*)_\bullet$ sends a finite set $I$ to the set of its $(G/H)$-colorings.
\end{example}

This group-change operation interacts well with the induction operation from the previous section as well as with disjoint unions. For example, if $V$ is an $H$-representation and $\Ind_H^G(V)$ is its (ordinary) induced $G$-representation, there is an obvious isomorphism
\begin{equation*}
\Ind_{\FB[H]}^{\FB} \Ind^{\FB[H]}_{(1)}(V) \cong \Ind^{\FB}_{(1)} \Ind_H^G(V).
\end{equation*}
We shall abbreviate this composition of inductions to $\Ind_H^{\FB}$.

\begin{remark} \label{rm:group_change_with_spacing}
For a subgroup $H\leq \G$, the group change operation  \eqref{eq:groupchangefunctor} can be composed with the spacing operation \eqref{eq:spacingfunctor}:
\begin{equation*}
\FB[H]\overset{\iota_H^{\G}}{\hookrightarrow} \FB[\G]\overset{\iota_{\times n}}{\hookrightarrow} \FB.
\end{equation*}
The resulting induction produces GTCAs taking non-trivial values only in degrees $k\times n$ with $k\geq 1$. We will denote this induction operation by $\Ind_{\FB[H](n)}^{\FB[G]}$. The collection of such operations satisfies the relations
\begin{equation*}
\Ind^{\FB}_{\FB[H](n)}\Ind^{\FB[H]}_{\FB[K](m)} = \Ind^{\FB}_{\FB[K](nm)} \quad \text{ for } K\leq \sym[m][H] \text{ and } H \leq \sym[n][G].
\end{equation*}
As above, we abbreviate the induction of $H$-representations to GTCAs
\begin{equation}
\Ind_H^{\FB} := \Ind^{\FB}_{\FB[H](n)} \Ind^{\FB[H]}_{(1)}.
\end{equation} 
\end{remark}

\begin{example}\label{ex:partitionGTCA}
A key example is $\Ind_{G\times\sym}^{\FB}(*)$, where
$H=G\times\sym$ sits diagonally inside $\G=G^n\rtimes\sym$ and acts trivially on the singleton set $*$.
When $G$ is trivial, we simply obtain partitions with blocks of size $n$ as in Example \ref{ex:partitions}. 
For general $G$, this combines Examples \ref{ex:partitions} and \ref{ex:colorings}, and we obtain partitions whose blocks are of size $n$ and  equipped with $G$-colorings (up to equivalence). We will more formally define these $G$-partitions in \S\ref{sec:dowling}.

Let us consider the case $G=\Z_2$, which we view as $\Z_2=\{+,-\}$. Here, we obtain \textit{signed} partitions with blocks of size $n$. That is, the elements in a block are colored with either $+$ or $-$, considered up to multiplication by $-$ throughout the block.
\end{example}

\subsection{Products of GTCAs} \label{sec:product_GTCA}
Let $A_\bullet$ and $B_\bullet$ be two GTCAs in $(\Cat,\otimes,\mathds{1})$.
Define their product $(A\otimes B)_\bullet$ by the Day convolution:
\begin{align}
(A\otimes B)_{[n]} & \cong  \bigoplus_{i+j=n} \Ind_{\G[i]\times \G[j]}^{\G} \left(A_{[i]}\otimes B_{[j]}\right)  \\
&\cong  \bigoplus_{i+j=n} \Hom_{\FB}\left([i]\coprod[j],[n]\right)\underset{{\small \G[i]\times\G[j]}}{\times} \left(A_{[i]}\otimes B_{[j]}\right) \nonumber
\end{align}
Note that this is precisely the left Kan extension along the disjoint union functor $\coprod: \FB^2  \to \FB$, and therefore it commutes with other induction operations in the obvious way:
\[
\Ind_{\FB[H]}^{\FB} (A\otimes_{\FB[H]} B) \cong \Ind_{\FB[H]}^{\FB}(A)\otimes_{\FB} \Ind_{\FB[H]}^{\FB}(B).
\]

\begin{example}[\textbf{Set partitions}]\label{ex:products}
Take $G=1$ and let $P^n_\bullet$ and $P^m_\bullet$ be the TCAs in $(\Set,\times,*)$ from Example \ref{ex:partitions}, assigning to a set  its collection of partitions into blocks of equal size $n$ or $m$ respectively. Then if $n\neq m$, the product TCA
\[
P^{\{n,m\}} := P^n \times P^m
\]
assigns to a set the collection of partitions into blocks of size either $n$ or $m$.

Multiplying further gives TCAs $P^{S}_\bullet$ for any set $S$ of natural numbers, parameterizing partitions with block sizes in $S$. Then whenever $S\cap T =\emptyset$, the product of those gives
\[
P^S \times P^T \cong P^{S\cup T}.
\]
\end{example}

\begin{remark}[\textbf{Exponential property of induction}]
Observe that the induction operation behaves like an exponential with respect to the tensor product of GTCAs, in the sense that
\[
\Ind^{\FB}_{(1)}(V\oplus W) = \Ind^{\FB}_{(1)}(V)\otimes \Ind^{\FB}_{(1)}(W).
\]
This is simply a reflection of the expansion
\[
(V\oplus W)^{\otimes n} = \bigoplus_{i+j=n} \Ind^{\sym}_{\sym[i]\times\sym[j]} V^{\otimes i}\otimes W^{\otimes j} = \bigoplus_{i+j=n}\Ind^{\G}_{\G[i]\times\G[j]} V^{\otimes i}\otimes W^{\otimes j}.
\]
Analogous formulas apply when working in other categories, like spaces or posets.
This should not be surprising to the reader familiar with species: this induction essentially performs a plethystic product (also known as substitution or composition) with the exponential TCA.
\end{remark}

\section{Dowling posets and orbit configuration spaces}\label{sec:dowling}
The combinatorial object that arises in the calculation of the homology of an orbit configuration space from the inclusion $\Conf_G^n(X,T)\subseteq X^n$ is the intersection data of the components of $X^n\setminus\Conf_G^n(X,T)$. 
This intersection data corresponds to a stratification of $X^n$.
Thus, the Borel--Moore homology of $\Conf^n_G(X,T)$ can be computed using the spectral sequence for stratified spaces from \cite{petersen}, whose purpose is to separate the topology of $X$ from the combinatorics of the arrangement $X^n\setminus \Conf_G^n(X,T)$.

In \cite{BG}, we introduced and utilized the poset of labeled partial $G$-partitions $\D^T(G,\F)$ as a combinatorial model for the strata (see \cite[Section 3.4]{BG} for definitions and a general discussion). We also noted in \cite[Proposition 2.1]{BG} that these posets are in fact functorial in $n$ and monoidal with respect to the disjoint union. Using the present terminology, this monoidality is rephrased to say that the collection $\D[\bullet]^T(G,\F)$ forms a GTCA. Its structure as such is the subject of this section.

\subsection{Dowling posets}

First, we remind the reader of some relevant notation, used to model the intersection pattern of strata in $X^n$, but warn that without the context of \cite{BG} the following definition will likely be opaque. 
Throughout this section, we let $G$ be a finite group.

\begin{definition}[\textbf{Partial $G$-partitions}] Let $I$ be any finite set. A \textbf{partial $G$-partition} of $I$ is a collection 
$\wt{\beta}=\{\wt{B_1},\dots,\wt{B_\ell}\}$ consisting of a partition
$\beta=\{B_1,\dots,B_\ell\}$ of a subset $\cup B_i\subseteq I$ along with a \emph{relative $G$-coloring} on each block:
functions $b_i:B_i\to G$, defined up to the equivalence relation $b_i\sim b_i g$ for $g\in G$. 
The \textbf{zero block} of a partial $G$-partition $\wt{\beta}$ of $I$ 
is the set 
$Z:=I \setminus\cup_{B\in\beta} B$.
\end{definition}
In the current context we shall be interested in the following refinement of the above notion of partial $G$-partition, as it arises from the combinatorics of orbit configuration spaces.

\begin{definition}[\textbf{Dowling posets}]
Fix two finite $G$-sets $\F$ and $T$. For a finite set $I$, let $\D[I]^T(G,\F)$ be the set of pairs $\parts$, where $\wt{\beta}$ is a partial $G$-partition of $I$ and $z:Z\to \F$ is a coloring of its zero block, with the restriction that  $|z^{-1}(G.s)|=1$ only if $s\in T$.

The set $\D[I]^T(G,\F)$ is partially ordered via the following covering relations:
\begin{description}
\item[(\textbf{merge})]
$(\wt{\beta}\cup\{\wt{A},\wt{B}\},z)\prec
(\wt{\beta}\cup\{\wt{C}\},z)$
where $C=A\cup B$ with $c\sim a\cup bg$ for some $g\in G$, and
\item[(\textbf{color})]
$(\wt{\beta}\cup\{\wt{B}\},z) \prec (\wt{\beta},z')$
where $z'$ is the extension of $z$ to 
$Z' = B\cup Z$ given on $B$ by a composition
\[
{B} \overset{b}{\rightarrow} G \overset{f}{\rightarrow} S
\]
for some $G$--equivariant function $f$.
\end{description}
\end{definition}

Note that $\D[I]^T(G,S)=\D[I]^{T\cap S}(G,S)$, so we may and will assume that $T\subseteq S$.
When $I=[n]$, denote $\D[{[n]}]^T(G,S)=\D[n]^T(G,S).$

Two notable special cases of these posets are: 
\begin{itemize}
\item
the partition lattice $\Q_I$, which is the case $G=\{1\}$ and $\F=\emptyset$; and 
\item 
the Dowling lattice $\D[I](G)$, which is the case $\F=T=\{*\}$ is a single point. 
\end{itemize}
Note that for any $T$, $\D[I]^T(G,S)$ is a subposet of $\D[I]^S(G,S)$; of particular interest is $\D[I]^\vee(G):=\D[I]^\emptyset(G,\{*\})$ which is a sublattice of the Dowling lattice $\D[I](G)=\D[I]^{\{*\}}(G,\{*\})$ consisting of partial $G$-partitions with nonsingleton zero block.
The following is an example which is not a lattice.

\begin{example}[\textbf{Toric type C poset}]\label{ex:typeC}
Let $G=\ZZ_2$ and $S=\{+,-\}$ with $G$ acting on $S$ trivially.
$\D^{\pm}(G,\pm)$ is not a lattice for any $n\geq2$, e.g. because there is not a unique maximum element: the maximal elements are all the different $S$-colorings of the set $\{1,2,\dots,n\}$.

The poset $\D^{\pm}(\ZZ_2,\pm)$ is discussed in \cite[Ex. 2.2.2.]{BG}, and it arises from the toric arrangement associated to the type C root system. The toric arrangements associated to the type B and D root systems give rise to the subposets $\D^{+}(\ZZ_2,\pm)$ and $\D^\emptyset(\ZZ_2,\pm)$, respectively.
\end{example}

\subsection{The GTCA of Dowling posets}

Properties of the Dowling posets $\D[I]^T(G,S)$ can be neatly expressed in the language of GTCAs, which we will explain next.

Each Dowling poset $\D[I]^T(G,\F)$ admits a natural action of a wreath product group $\G[I]$. Furthermore, the disjoint union operation on sets clearly extends to a product operation on the collection of partial $G$-partitions:
\[
\D[I]^T(G,\F)\times \D[J]^T(G,\F) \to \D[I\sqcup J]^T(G,\F),
\]
compatible with the actions on the two sides. 
Therefore the collection $\D[\bullet]^T(G,\F)$ has the structure of a GTCA, taking values in the category of posets.

Since a partition is trivially a disjoint union of its blocks, this GTCA decomposes as a product once one forgets the ordering. This is not surprising in light of Examples \ref{ex:partitions}, \ref{ex:colorings} and \ref{ex:products} from the previous section, and is readily observed in the language of species (as Henderson observes in the case $S=\{0\}$ \cite{henderson}). We state this formally in the following lemma.

\begin{lemma}[\textbf{Dowling GTCA factorization}] \label{lem:poset_factorization}
Let $G$ be a finite group, and let $\F$ and $T$ be two finite $G$-sets. Pick orbit representatives $s\in [s]$ for all $[s]\in \F/G$ and let $G_s$ be the corresponding stabilizer in $G$. Then as a GTCA of \underline{sets} $\D[\bullet]^T(G,\F)$ factors as
\begin{equation*}
\D[]^T(G,\F) \underset{\texttt{Set}}{\cong}
\prod_{n=1}^{\infty} \Ind^{\FB}_{G\times \sym}(*) \times \prod_{[s]\in \F/G} \Ind_{\FB[G_s]}^{\FB}(*)_{\bullet}^T(s)
\end{equation*}
where $(*)_{\bullet}^T(s)$ denotes $(*)_{\bullet}$ when $s\in T$ and $(\check{*})_{\bullet}$ when $s\notin T$.

The factors were described in Examples \ref{ex:comm}, \ref{ex:starcheck}, and \ref{ex:partitionGTCA}.
\end{lemma}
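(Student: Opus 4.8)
The plan is to produce an explicit bijection, natural in $I\in\FB$ and compatible with the GTCA products, between $\D[I]^T(G,\F)$ and the value on $I$ of the claimed infinite product. First I would check that the product is well defined as a GTCA of sets: by the Day-convolution formula for products of GTCAs, the factor $\Ind^{\FB}_{G\times\sym}(*)$ can only contribute to the value on a finite set $I$ through a subset whose cardinality is a multiple of $n$, and $\Ind^{\FB}_{\FB[G_s]}(*)^T_\bullet(s)$ only through an arbitrary (resp.\ of cardinality $\neq 1$) subset; hence only finitely many factors matter on any given $I$. Unwinding the convolution, a point of the right-hand side on $I$ is then the same as an unordered decomposition $I=\bigsqcup_{n\geq 1}I_n\sqcup\bigsqcup_{[s]\in\F/G}Z_{[s]}$ together with: on each $I_n$, a partition of $I_n$ into blocks of size $n$ with a relative $G$-coloring on each block (Example~\ref{ex:partitionGTCA}); and on each $Z_{[s]}$, a $(G/G_s)$-coloring of $Z_{[s]}$ (Example~\ref{ex:colorings}), subject to $|Z_{[s]}|\neq 1$ whenever $s\notin T$ --- the latter being exactly the effect of replacing $(*)_\bullet$ by $(\check{*})_\bullet$ (Example~\ref{ex:starcheck}).

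Next I would write down the bijection. Given $\parts\in\D[I]^T(G,\F)$, set $I_n:=\bigcup\{B\in\beta:|B|=n\}$ with the partial-$G$-partition structure inherited from $\wt{\beta}$, and $Z_{[s]}:=z^{-1}(G.s)$ together with $z|_{Z_{[s]}}\colon Z_{[s]}\to G.s$, which under the identification $G.s\overset{\sim}{\to}G/G_s$ of $G$-sets is precisely a $(G/G_s)$-coloring of $Z_{[s]}$. This recovers a decomposition of $I$ of the type described above, and the defining restriction ``$|z^{-1}(G.s)|=1$ only if $s\in T$'' is exactly the condition $|Z_{[s]}|\neq 1$ for $s\notin T$. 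Conversely, from such a decomposition with its data I reassemble $\beta$ as the union of all the block-partitions, carry over the relative colorings to get $\wt{\beta}$, and define $z$ on $Z=\bigsqcup_{[s]}Z_{[s]}$ by composing each $(G/G_s)$-coloring with $G/G_s\overset{\sim}{\to}G.s\hookrightarrow\F$; one checks directly that the zero block of the resulting partial $G$-partition is exactly $Z$ and that the two constructions are mutually inverse.

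Finally I would verify the structural compatibilities, which are formal. Every operation used above --- restricting to the union of blocks of a fixed size, passing to $z^{-1}$ of a $G$-orbit, reading off induced colorings --- is defined canonically from the combinatorial data, so the bijection is natural with respect to $G$-bijections $I\overset{\sim}{\to}I'$; in particular it intertwines the $\G[I]$-actions. For the product: the multiplication on $\D[\bullet]^T(G,\F)$ is disjoint union of partial $G$-partitions together with concatenation of zero-block colorings, and under the bijection this splits as the disjoint unions $I_n\sqcup J_n$ and $Z^I_{[s]}\sqcup Z^J_{[s]}$, which is precisely the Day-convolution product on each factor of the right-hand side. Hence the bijection is an isomorphism of GTCAs of sets.

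I expect the only genuine content --- as opposed to bookkeeping --- to lie in two places: matching the abstract induction operations $\Ind^{\FB}_{G\times\sym}$ and $\Ind^{\FB}_{\FB[G_s]}$ with the concrete combinatorial descriptions furnished by Examples~\ref{ex:partitionGTCA} and~\ref{ex:colorings}, and tracking how the hypothesis on $T$ manifests as the $(\check{*})_\bullet$-versus-$(*)_\bullet$ dichotomy in the zero-block factors. Everything else is a routine unwinding of the Day-convolution definition of the product of GTCAs.
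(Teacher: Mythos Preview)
Your proposal is correct and follows essentially the same approach as the paper. The paper's proof is slightly more terse: rather than unwinding the Day convolution on the right-hand side, it directly specifies where each generator $*$ lands in $\D[\bullet]$ (the single block $[1,\ldots,n]$ with constant coloring for the $\Ind^{\FB}_{G\times\sym}$ factors, and the zero block with constant $s$-coloring for the $\Ind^{\FB}_{\FB[G_s]}$ factors), extends multiplicatively, and then observes that the resulting product map hits every partial $G$-partition uniquely. Your explicit bijection via the decomposition $I=\bigsqcup_n I_n\sqcup\bigsqcup_{[s]}Z_{[s]}$ is exactly the inverse of that map, so the two arguments are two presentations of the same content.
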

\begin{proof}
Write $\D[\bullet]$ for $\D[\bullet]^T(G,\F)$. The image in $\D[\bullet]$ of $*$ from each induction in the product is as follows:
\begin{enumerate}
\item Sending $*$ to the block $[1,\ldots,n] \in \D[{[n]}]$ with each element colored by the same element of $G$ defines a map $\Ind^{\FB}_{G\times \sym}(*)\into \D[\bullet]$ whose image is exactly the $G$-partitions with blocks of size $n$.
\item When $s\in T$, sending $*$ in degree $n\geq 1$ to the  zero block $[1,\dots,n]$ with constant coloring by $s$ defines a map $\Ind^{\FB}_{\FB[G_s]}(*)_\bullet\into \D[\bullet]$ whose image is exactly the zero blocks with coloring in the orbit $[s]$.
\item Similarly to the previous case, when $s\in S\setminus T$, sending $*$ in degree $n>1$ to the zero block $[1,\ldots,n]$ with constant coloring by $s$ defines a map $\Ind^{\FB}_{\FB[G_s]}(\check{*})_{\bullet}\into \D[\bullet]$ whose image is exactly the zero blocks of size $>1$ and coloring in the orbit $[s]$.
\end{enumerate}
The product of these sub-GTCAs consists precisely of all possible ways to construct a partial $G$-partition of a set with an $S$-colored zero block uniquely.
\end{proof}

The above product decomposition seems to require forgetting the order relation, and thus we pose the following question.
\begin{question}
Is it possible to define a product operation on GTCAs of posets so that the decomposition of Lemma \ref{lem:poset_factorization} holds in this category? 
\end{question}

\subsection{Homology of Dowling lattices}\label{sec:poset_homology}

The GTCA  $\D[\bullet](G)=\D[\bullet]^{\{*\}}(G,\{*\})$ of Dowling lattices gives rise to a fascinating algebraic GTCA given by their top homology.

The order complex attaches a topological space to every poset in a functorial way. In the case of Dowling lattices $\D(G)$ it is well-known that, after removing the top and bottom elements, the order complex has the homotopy type of a wedge of $(n-2)$-dimensional spheres (see \cite{folkman}). Thus the main topological invariant is the $\G$-representation on $\Ho_{n-2}(\overline{\D(G)})$, where we assume coefficients are as in Conventions \ref{conventions}.

Letting $n$ vary, one can promote this sequence of representations to a GTCA as follows. For every lattice $P$ let $\ol{P}$ denote the result of removing the top and bottom elements from $P$. Then a morphism of lattices $P \to Q$ induces one of posets $\ol{P}\to \ol{Q}$. Next, \cite[Theorem 5.1.5]{wachs} describes equivariant isomorphisms
\[
\wt\Ho_r(\ol{P\times Q}) \cong \bigoplus_{i=0}^r \wt\Ho_i(\ol{P})\otimes \wt\Ho_{r-i-2}(\ol{Q})
\]
which are easily seen to be functorial in $P$ and $Q$.
This implies that after shifting, the poset homology $P\mapsto \wt\Ho_{\bullet-2}(\ol{P})$ is a monoidal functor from bounded posets to graded $\K$-modules. In particular, the homology of a GTCA of lattices itself forms a GTCA.

On Dowling lattices, this construction specializes to give a GTCA structure on the top homology
\[
\wt\Ho_{n-2}(\overline{\D[n](G)})\otimes \wt\Ho_{m-2}(\overline{\D[m](G)}) \to \wt\Ho_{n+m-2}(\overline{\D[n+m](G)})
\]
and all other reduced homology groups vanish. The resulting GTCA will be denoted by $\wt\Ho_{|\bullet|-2}(\ol{\D[\bullet](G)})$
and it will play a role in our topological analysis of orbit configuration spaces.

Note that for $G=1$ the isomorphism $\D(1)\cong\Q_{n+1}$ defines a product structure on the top homology of ordinary partition lattices (shifted by 1).
Here, one considers the $\sym[n]$ action on $\Q_{n+1}$ and can show that  $\wt\Ho_{|\bullet|-2}(\ol{\Q_{\bullet+1}})$ is the TCA of regular representations.
As for a general group $G$, while the sequence of $\G$-representations $\wt\Ho_{n-2}(\ol{\D(G)})$ has been studied in the context of species and generating functions, the algebra structure on this sequence remains virtually unexplored.

\begin{question}
Describe the product structure of the GTCA $\wt\Ho_{|\bullet|-2}(\ol{\D[\bullet](G)})$ and give a combinatorial description of its generators and relations.
\end{question}

\subsection{The GTCA of order relations}

A central tool in the study of poset homology, as well as the cohomology of subspace arrangements, is the \emph{Whitney homology}: we shall think of this as the collection of poset homology groups $\wt\Ho_{\bullet-2}(\ol{P^{\leq p}})$ as $p$ ranges over the elements of $P$. For example, for a finite bounded poset $P=[\hat{0},\hat{1}]$ ranked by $\rk:P\to \NN$,  there is a formal equality
\[
\wt\Ho_{\bullet-2}(\ol{P}) = \pm\sum_{\hat{0} \leq p < \hat{1} } (-1)^{\rk(p)} \wt\Ho_{\bullet-2}(\overline{P^{\leq p}})
\]
in some appropriate Grothendieck group of modules, therefore allowing an inductive description of the homology of $\ol{P}$ (see \cite{sundaram,wachs} for examples). One is thus led to studying the collection of subposets $P^{\leq p}$ for all $p\in P$ -- systematized in this subsection. 

Let $(P,\leq)$ be a poset, and view the order relation $\leq$ as a set of pairs $\{(p_1,p_2)\mid p_1\leq p_2\}\subseteq P\times P.$ The second projection
$$
\pi_2 : {\leq} \to P
$$
has the property that $\pi_2^{-1}(p) \cong P^{\leq p}$, i.e. one realizes the lower intervals in $P$ as the fibers of $\pi_2$. Furthermore, the set $\leq$ is itself naturally ordered by restricting the product order on $P\times P$ (note that this does not coincide with the standard order relation on the poset of intervals, with its order induced from $P^{\operatorname{op}}\times P$). Under our chosen ordering, the projection $\pi_2$ is order preserving, and the isomorphism $\pi_2^{-1}(p) \cong P^{\leq p}$ is one of posets.

Lastly, the passage from a poset $P$ to its order relation $\leq$ is functorial and monoidal with respect to the Cartesian product. Therefore, given a GTCA of posets $P_\bullet$, one gets another by considering $\leq_\bullet$. The projection
\[
\pi_2: {\leq}_\bullet \to P_\bullet
\]
is clearly a morphism of poset GTCAs.

Using this language, the order relation of Dowling posets has a product decomposition as a GTCA compatible with that in Lemma \ref{lem:poset_factorization} above. 
Just as before, this requires forgetting the ordering on $\D[\bullet]$, but now we must also forget the order coming from the second coordinate of $\leq_\bullet.$ 
\begin{definition}
The \textit{right-discretized order relation} of a poset $P$ is the poset $\leq^{r\delta}$ whose underlying set is
\[\leq^{r\delta}=\{(p_1,p_2)\in P\times P\mid p_1\leq p_2\}\]
and whose partial order is given by
\[(p_1,p_2)\leq (q_1,q_2) \iff 
p_1\leq q_1 \text{ and } p_2=q_2.\]
Note that there is an isomorphism $\leq^{r\delta} \cong \coprod_{p\in P} P^{\leq p}$ which is natural in the poset $P$.
\end{definition}

\begin{lemma}[\textbf{GTCA factorization of intervals}]\label{lem:interval_factorization}
Let $G$ be a finite group, and let $\F$ and $T$ be two finite $G$-sets. Pick orbit representatives $s\in [s]$ for $[s]\in \F/G$ and let $G_s$ be the corresponding stabilizer. 
Note that the posets $\D[\bullet]^T(G_s,s)$ form a $G_s$TCA.

Then the associated poset GTCA $\leq_\bullet^{r\delta}$ decomposes as a product of poset GTCAs in a way compatible with the factorization of the set GTCA $\D[\bullet]$ from Lemma \ref{lem:poset_factorization}: 
\begin{equation}
\label{eq:intervalfactors}
\begin{tikzpicture}[baseline=(current  bounding  box.center)]
\node at (-1.6,2.5) {$\leq^{r\delta}$};
\node at (-1.6,0) {$\D[]^T(G,S)$};
\node at (-0.5,2.5) {$\cong$};
\node at (1.1,2.4) {$\displaystyle\prod_{n=1}^\infty\Ind_{G\times\sym}^{\FB}\Q_n$};
\node at (2.7,2.5) {$\times$};
\node at (4.9,2.3) {$\displaystyle\prod_{[s]\in S/G}\Ind_{\FB[{G_s}]}^{\FB}\D[\bullet]^T(G_s,s)$};
\node at (-0.5,0) {$\cong$};
\node at (1.1,0) {$\displaystyle\prod_{n=1}^\infty\Ind_{G\times\sym}^{\FB}(*)$}; 
\node at (2.7,0) {$\times$};
\node at (4.7,-0.1) {$\displaystyle\prod_{[s]\in S/G}\Ind_{\FB[G_s]}^{\FB}(*)_\bullet^{T}(s)$};
\draw[->] (-1.7,2) to node[left] {$\pi_2$} (-1.7,0.5);
\foreach \x in {2.07,5.6}
\draw[->] (\x,1.9) to (\x,0.7);
\end{tikzpicture}
\end{equation}
where $(*)_\bullet^T(s)$ denotes $(*)_\bullet$ when $s\in T$ and $(\check{*})_\bullet$ when $s\notin T$.
In the top row, we let $G\times \sym$ act on the partition lattice $\Q_n$ via its ordinary $\sym$ action with $G$ acting trivially. 
\end{lemma}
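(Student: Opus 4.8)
The plan is to leverage the factorization of the \emph{set} GTCA $\D[\bullet]^T(G,\F)$ from Lemma \ref{lem:poset_factorization} and upgrade it to a statement about the right-discretized order relation $\leq^{r\delta}_\bullet$, using the fact that $\leq^{r\delta}$ is a monoidal functor of the poset $P$ (indeed $\leq^{r\delta}_\bullet \cong \coprod_{p} P^{\leq p}$ naturally). The first step is to record that the passage $P \mapsto \leq^{r\delta}$, being functorial and monoidal for the Cartesian product, carries a GTCA of posets to a GTCA of posets, and sends a \emph{product} of poset GTCAs to the product of the corresponding $\leq^{r\delta}$'s; this is the analogue of the observation already made in the text for $\pi_2$ and order relations, now with the right coordinate discretized. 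Concretely one checks that for poset GTCAs $P_\bullet$ and $Q_\bullet$ there is a natural isomorphism $\leq^{r\delta}_\bullet(P\otimes Q) \cong \leq^{r\delta}_\bullet(P)\otimes \leq^{r\delta}_\bullet(Q)$, and similarly that $\leq^{r\delta}$ commutes with the inductions $\Ind^{\FB}_{(n)}$ and $\Ind^{\FB}_{\FB[H]}$ (all being left Kan extensions, this is the exponential/compatibility property already invoked for $\otimes$ in \S\ref{sec:product_GTCA} and for the group-change functor).

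Second, I would identify the $\leq^{r\delta}$ of each factor appearing in Lemma \ref{lem:poset_factorization}. For the factor $\Ind^{\FB}_{G\times\sym}(*)$ parametrizing $G$-partitions with all blocks of size $n$: an element is such a $G$-partition $\wt\beta$, and an element of $P^{\leq \wt\beta}$ in $\D[\bullet]$ is a coarsening obtained by merging blocks of $\wt\beta$ — but since $\D[\bullet]^T(G,S)$ restricted below a blocks-of-size-$n$ partition is just the partition lattice on the set of $n$ blocks (the $G$-colorings are absorbed by the relative-coloring equivalence when merging), one gets $P^{\leq \wt\beta} \cong \Q_k$ where $k$ is the number of blocks. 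As $\wt\beta$ ranges over the $G$-set of all such partitions of $[kn]$, this assembles — using the induction characterization Fact stated after Example \ref{ex:products} — exactly into $\Ind^{\FB}_{G\times\sym}\Q_n$ in degree $kn$, with $G\times\sym$ acting on $\Q_n$ through its $\sym$-action and $G$ trivially, matching the top row. For the zero-block factors indexed by $[s]\in\F/G$: below a configuration that places a zero block of colored points, the lower interval in $\D[\bullet]^T(G,S)$ is precisely $\D[\bullet]^T(G_s,s)$ on the points of that block (merging and recoloring within a single $G_s$-colored orbit), which is why the top row has $\Ind^{\FB}_{\FB[G_s]}\D[\bullet]^T(G_s,s)$; and $\pi_2$ on this factor is the projection $\leq^{r\delta}(\D[\bullet]^T(G_s,s)) \to \D[\bullet]^T(G_s,s)$, which on the nose is the $\leq^{r\delta}$-to-base map, compatible with the bottom row $\Ind^{\FB}_{\FB[G_s]}(*)^T_\bullet(s)$.

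Third, assemble: applying the monoidality of $\leq^{r\delta}$ to the product decomposition of $\D[\bullet]^T(G,S)$ and substituting the factor-by-factor identifications gives the displayed isomorphism of poset GTCAs, and naturality of $\pi_2:\leq^{r\delta}\to\mathrm{id}$ makes the whole square \eqref{eq:intervalfactors} commute. The main obstacle, and the step I would spend the most care on, is the second one — verifying that the lower interval $\D[\bullet]^{\leq p}$ below a pure blocks-of-size-$n$ element really is the \emph{uncolored} partition lattice $\Q_k$ (and likewise that the zero-block lower intervals are exactly $\D[\bullet]^T(G_s,s)$), since this is where the combinatorics of the merge and color covering relations, and of the relative-coloring equivalence $b_i\sim b_i g$, must be unwound; once these local interval computations are pinned down, the global assembly is formal from the Kan-extension compatibilities already established in \S\ref{sec:tca}.
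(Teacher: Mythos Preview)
Your overall plan has a genuine gap at the very first step. You propose to apply the monoidal functor $P\mapsto \leq^{r\delta}$ to the product factorization of Lemma~\ref{lem:poset_factorization}, but that factorization holds only as a GTCA of \emph{sets}, not posets --- indeed the paper explicitly leaves open (in the Question immediately following Lemma~\ref{lem:poset_factorization}) whether any poset-level factorization of $\D[\bullet]^T(G,S)$ exists. The factors $\Ind^{\FB}_{G\times\sym}(*)$ and $\Ind^{\FB}_{\FB[G_s]}(*)^T_\bullet(s)$ are singletons, hence discrete as posets; applying $\leq^{r\delta}$ to them literally returns the same singletons, not $\Q_n$ or $\D[\bullet]^T(G_s,s)$. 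The order relation on $\D[\bullet]^T(G,S)$ genuinely mixes the set-level factors (merging moves blocks between size classes; coloring moves blocks into the zero block), so there is no formal way to ``distribute $\leq^{r\delta}$ over the product.'' The paper instead imports the interval identification
\[
\D[A]^T(G,S)^{\leq (\wt\beta,z)} \cong \prod_{B\in\beta}\Q_B \times \prod_{[s]\in S/G} \D[z^{-1}([s])]^T(G_s,s)
\]
as an independent result (\cite[Theorem~A]{BG}), and this is the actual content that must be proved or cited.

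Separately, your interval computation in the second step has the order direction reversed. In $\D[\bullet]^T(G,S)$ merging goes \emph{up} (the covering relation reads $(\wt\beta\cup\{\wt A,\wt B\},z)\prec(\wt\beta\cup\{\wt C\},z)$), so the lower interval below a partition $\wt\beta$ with $k$ blocks of size $n$ consists of \emph{refinements}, not coarsenings, and is isomorphic to $\prod_{B\in\beta}\Q_B\cong \Q_n^k$ rather than to the partition lattice $\Q_k$ on the set of blocks. This is exactly what makes the induced factor $\Ind^{\FB}_{G\times\sym}\Q_n$ come out right: in degree $kn$ it is $\Ind^{\G[kn]}_{\sym[k][\G]}\Q_n^k$, with fibers $\Q_n^k$ over each element of the base, not $\Q_k$.
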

\begin{proof}
Fibers of the projection $\pi_2$ are just lower intervals in a Dowling poset, and so we use the interval factorization from \cite[Theorem~A]{BG}, which states that for any $\parts\in\D^T(G,S)$, 
\begin{equation}\label{eq:intervals}
\D[A]^T(G,\F)^{\leq \parts} \cong 
\prod_{B\in\beta} \Q_B 
\times
\prod_{[s]\in \F/G} \D[{z^{-1}([s])}]^T(G_s,s).
\end{equation}

In particular, when $[1,\dots,n]\in\D[{[n]}](G,S)$ is a single block with trivial $G$-coloring, we have an isomorphism $\Pi_n\cong\D[{[n]}](G,S)^{\leq[1,\dots,n]}$, which then induces a map from the GTCA generated by $\Pi_n$ to $\leq^{r\delta}$.
When $(\emptyset,s)\in\D[{[n]}](G,S)$ is an element whose zero block is $[n]$ and colored by $s$, we have an isomorphism $\D[{[n]}]^T(G_s,s)\cong \D[{[n]}]^T(G,S)^{\leq(\emptyset,s)}$, which then induces a map from the GTCA generated by $\D[\bullet]^T(G_s,s)$ to $\leq^{r\delta}$.
Lemma \ref{lem:poset_factorization} describes how to write an element $\parts\in\D[A]^T(G,S)$ as a product of its blocks, and \eqref{eq:intervals} describes how to lift this so that $\D[A]^T(G,S)^{\leq\parts}$ can be written as an element in the product.

Because \eqref{eq:intervals} is an isomorphism of posets, the product decomposition of the right-discretized order relation $\leq_{\bullet}^{r\delta}$ holds as a GTCA of posets, rather than just as sets.
\end{proof}

\begin{example}\label{ex:interval_factors}
Consider a finite group $G$ acting trivially on $T=S=\{+,-\}$.
The interval underneath a single-block partition $\{1,2\}$ in $\D[{[2]}]^\pm(G,\pm)$ with any $G$-coloring is isomorphic to $\Q_{[2]}$, a poset with two elements, independently of the chosen $G$-coloring.
Similarly, in $\D[\{5\}]^{\pm}(G,\pm)$, the interval underneath a maximal element, say the zero block $\{5\}$ colored by $+$, is isomorphic to $\D[\{5\}]^{\pm}(G,+)$, which is again a poset with two elements.

Now let us consider the product $12\cdot 34\cdot 5_+$ in $\D[{[5]}]^{\pm}(G,\pm)$: the interval underneath should be a product of the previously mentioned intervals. We depict this isomorphism in Figure \ref{fig:interval_factors}. Note that this diagram sits inside of the much larger poset 
$\D[{[5]}]^{\pm}(G,\pm)$.

\begin{figure}[hb]
\begin{tikzpicture}
\node at (0,-1) {{\tiny $\left(\D[\{1,2\}]^{\pm}(G,\pm)\right)^{\leq 12}\times
\left(\D[\{3,4\}]^{\pm}(G,\pm)\right)^{\leq 34}
\times\left(\D[\{5\}]^{\pm}(G,\pm)\right)^{\leq 5_+}
\cong$}{\small$\left(\D[{[5]}]^{\pm}(G,\pm)\right)^{\leq 12\cdot 34 \cdot 5_+}$}};
\draw[blue,thick,-] (-4.5,0)--(-4.75,1);
\draw[blue,thick,-] (3,1)--(4,0);
\draw[cyan,thick,-] (-2,0)--(-2,1);
\draw[cyan,thick,-] (4,1)--(4,0);
\draw[red,thick,-] (0.5,0)--(0.75,1);
\draw[red,thick,-] (5,1)--(4,0);
\foreach \x/\y in {-4.5/-4.75,-2/-2,0.5/0.75} 
{\draw[fill=black] (\x,0) circle (0.5ex);
\draw[fill=black] (\y,1) circle (0.5ex);};
\node at (-3.1,0.5) {$\times$};
\node at (-0.45,0.5) {$\times$};
\node at (2.05,0.5) {$\cong$};
\foreach \x in {3,4,5} {\draw[-] (\x,2)--(4,3);};
\draw[-] (3.4,1.6)--(3,2)--(3,1)--(4,2)--(5,1)--(5,2)--(4.6,1.6);
\draw[-] (3.6,1.4)--(4,1)--(4.4,1.4);
\node at (2.5,2) {\tiny $12\cdot 34$};
\node at (5.5,2) {\tiny $34\cdot 5_+$};
\node at (4,3.2) {\tiny $12\cdot 34\cdot 5_+$};
\node at (-5,1.2) {\tiny $12$};
\node at (-2,1.3) {\tiny $34$};
\node at (1,1.2) {\tiny $5_+$};
\end{tikzpicture}
\caption{Factorization of an interval inside $\D[{[5]}]^{\pm}(G,\pm)$. See Example \ref{ex:interval_factors}.}
\label{fig:interval_factors}
\end{figure}
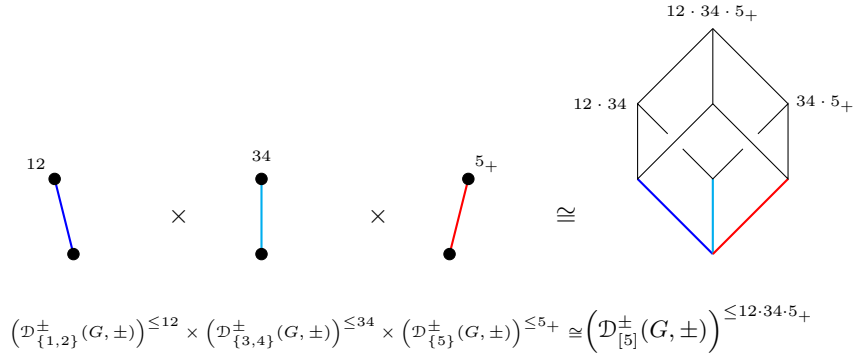
\end{example}

\subsection{Whitney homology}

The interval factorization of Lemma \ref{lem:interval_factorization} immediately translates to a factorization of Whitney homology, defined as follows.

\begin{definition}\label{def:whitney}
The \textit{Whitney homology} of a finite poset $P$ containing a bottom element is
\[\WH_*(P):=\bigoplus_{p\in P} \wt{\Ho}_{*-2}(\ol{P^{\leq p}}).\]
\end{definition}

Just as with poset homology, $P\mapsto\WH_*(P)$ is a monoidal functor from finite bounded-below posets to graded $\K$-modules (where $\K$ is as in Conventions \ref{conventions}). In particular, when $P_\bullet$ is a GTCA of finite bounded-below posets, $\WH_*(P_\bullet)$ is a GTCA of graded $\K$-modules.

\begin{theorem}[\textbf{Whitney homology factorization}]\label{thm:Whitney_factorization}
Let $G$ be a finite group, and let $S$ and $T$ be two finite $G$-sets. Pick orbit representatives $s\in [s]$ for orbits $[s]\in \F/G$ and let $G_s$ denote the respective stabilizers.
The GTCA of Whitney homology $\WH_{*}(\D[\bullet]^T(G,S))$
factors as a GTCA of graded $\K$-modules into the product
\begin{equation} \label{eq:Whitney_factorization}
\bigotimes_{n=1}^\infty \Ind_{G\times \sym}^{\FB} \left(\wt\Ho_{n-3}(\ol{\Q_n})\right) \otimes \bigotimes_{[s]\in \F/G} \Ind_{\FB[G_s]}^{\FB} \left( \wt\Ho_{|\bullet|-2}(\ol{\D[\bullet]^T(G_s, s)})\right)
\end{equation}
where the graded $\K$-module $\wt\Ho_{n-3}(\ol{\Q_n})$ is homogeneous of degree $n-1$, and the degree of $\wt\Ho_{|\bullet|-2}(\ol{\D[\bullet]^T(G_s)})$ is $|\bullet|$.
\end{theorem}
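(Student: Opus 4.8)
The plan is to deduce the Whitney homology factorization directly from the poset-level factorization already established in Lemma~\ref{lem:interval_factorization}, by applying the monoidal functor $\WH_*$. The key point is that $\WH_*$ factors through the right-discretized order relation: by definition $\WH_*(P)=\bigoplus_{p\in P}\wt\Ho_{*-2}(\ol{P^{\leq p}})$, and the isomorphism $\leq^{r\delta}\cong\coprod_{p\in P}P^{\leq p}$ shows that $\WH_*(P)$ is computed by taking reduced homology (shifted by $2$) of the order complex on each connected component of $\ol{(\leq^{r\delta})}$, where $\ol{(\cdot)}$ removes the added top and bottom of each interval $P^{\leq p}$. So the first step is to observe that $P\mapsto\WH_*(P)$ is the composite of $P_\bullet\mapsto\,\leq^{r\delta}_\bullet$ with the shifted poset-homology functor $Q\mapsto\wt\Ho_{\bullet-2}(\ol Q)$ applied componentwise, and that both of these are monoidal with respect to Cartesian product of posets on the source and $\otimes$ of graded $\K$-modules on the target. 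The monoidality of the second functor is exactly the Wachs isomorphism $\wt\Ho_r(\ol{P\times Q})\cong\bigoplus_i \wt\Ho_i(\ol P)\otimes\wt\Ho_{r-i-2}(\ol Q)$ recalled in \S\ref{sec:poset_homology}, together with its naturality; the monoidality of $P_\bullet\mapsto\,\leq^{r\delta}_\bullet$ is the statement that the passage to order relations is monoidal and that right-discretization respects products (which was used already in Lemma~\ref{lem:interval_factorization}).

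Given this, the second step is purely formal: apply $\WH_*$ to the GTCA isomorphism of Lemma~\ref{lem:interval_factorization}. Since $\WH_*$ is monoidal it carries the product $\prod_n \Ind_{G\times\sym}^{\FB}\Q_n \times \prod_{[s]}\Ind_{\FB[G_s]}^{\FB}\D[\bullet]^T(G_s,s)$ to the tensor product of the images of the factors, and since left Kan extensions (the $\Ind$'s) commute with the relevant homology functor --- here one uses that $\WH_*$ of an induced GTCA of posets is the induced GTCA of graded modules, because induction is a coproduct-type construction and homology commutes with the disjoint unions involved --- we get
\[
\WH_*(\D[\bullet]^T(G,\F)) \;\cong\; \bigotimes_{n=1}^\infty \Ind_{G\times\sym}^{\FB}\WH_*(\Q_n) \;\otimes\; \bigotimes_{[s]\in\F/G}\Ind_{\FB[G_s]}^{\FB}\WH_*(\D[\bullet]^T(G_s,s)).
\]
The third step identifies the two types of factors. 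For the partition lattice $\Q_n$: the interval $\Q_n^{\leq p}$ underneath $p$ is a product of smaller partition lattices (by the classical interval structure of $\Q_n$, which is the $G=1$, $\F=\emptyset$ case of \eqref{eq:intervals}), and taking the induced GTCA collapses the whole Whitney homology of $\Q_n$ to just its top piece indexed by the maximal element, namely $\wt\Ho_{n-3}(\ol{\Q_n})$ --- this is the standard fact that $\WH_*$ of the induced GTCA on a single generator in degree $n$ records only $\wt\Ho_{\bullet-2}$ of the full poset, because lower intervals decompose as products of lower-degree generators and are absorbed into the tensor product. Placing it in degree $n-1$ (rather than $n-2$) is the shift built into the monoidal structure on $\wt\Ho_{\bullet-2}(\ol{\cdot})$, as explained after the Wachs isomorphism in \S\ref{sec:poset_homology}. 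For the $\F/G$ factors, the GTCA $\Ind_{\FB[G_s]}^{\FB}\D[\bullet]^T(G_s,s)$ has, underneath its degree-$\bullet$ generator, the interval $\D[\bullet]^T(G_s,s)$ itself, whose Whitney homology --- after the same absorption of lower intervals into the tensor factors --- contributes exactly $\wt\Ho_{|\bullet|-2}(\ol{\D[\bullet]^T(G_s,s)})$ in degree $|\bullet|$.

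The step I expect to be the main obstacle is making precise the claim that applying $\WH_*$ to an induced poset GTCA $\Ind_{H}^{\FB}P_\bullet$ yields the induced graded-module GTCA $\Ind_H^{\FB}\bigl(\wt\Ho_{|\bullet|-2}(\ol{P_\bullet})\bigr)$ rather than $\Ind_H^{\FB}\WH_*(P_\bullet)$ --- i.e.\ explaining why only the top homology of the generating poset survives while all the lower intervals are reorganized into contributions of the \emph{other} tensor factors. Concretely, one must check that an element $p$ of $\D[\bullet]^T(G,\F)$ whose interval $\D^{\leq p}$ splits via \eqref{eq:intervals} as $\prod_{B\in\beta}\Q_B\times\prod_{[s]}\D[z^{-1}([s])]^T(G_s,s)$ contributes, under the Wachs K\"unneth formula for $\WH_*$, exactly the tensor product of the \emph{top} homology of each $\ol{\Q_B}$ with the Whitney homology summand of each $\D[\ast]^T(G_s,s)$ indexed by the relevant zero-block coloring --- and that summing over all $p$ reassembles precisely the stated infinite tensor product with the correct degrees and $\sym$/$G_s$-equivariance. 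This is essentially a bookkeeping argument: one should set it up by choosing, as in the proof of Lemma~\ref{lem:interval_factorization}, the canonical way to write each $p$ and each interval $\D^{\leq p}$ as an element of the product GTCA, and then observe that the Wachs isomorphism is compatible with exactly this bookkeeping, so the homology-level isomorphism is forced by the poset-level one together with the monoidality and naturality already in hand. Verifying the degree shifts ($n-1$ for the $\Q_n$ factor, $|\bullet|$ for the Dowling factors) and the equivariance claims is then routine but must be stated carefully.
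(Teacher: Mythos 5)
Your overall strategy coincides with the paper's: realize $\WH_*$ as the composite of $P\mapsto{\leq^{r\delta}}$ with the shifted poset-homology functor $Q\mapsto\wt\Ho_{*-2}(\ol{Q})$, note that both functors are additive (monoidal for $\coprod$) and multiplicative (monoidal for $\times$, via the Wachs isomorphism), and feed the factorization of Lemma \ref{lem:interval_factorization} through them. Your step 1 and your identification of the degrees are correct. However, your intermediate formula, and the ``main obstacle'' you then set out to overcome, rest on a misreading of Lemma \ref{lem:interval_factorization} that makes the middle of the argument wrong as written.

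Lemma \ref{lem:interval_factorization} factors the poset GTCA $\leq^{r\delta}_\bullet$ as a product of GTCAs \emph{induced from the single posets} $\Q_n$ (placed in degree $n$, as the lower interval beneath a one-block partition) and from the poset GTCAs $\D[\bullet]^T(G_s,s)$. The functor you must now apply to that factorization is the shifted homology functor $\wt\Ho_{*-2}(\ol{(-)})$, extended additively over disjoint unions --- \emph{not} $\WH_*$ a second time. Since this functor is additive and multiplicative, it commutes with $\Ind$ and with products, and it sends the generating poset $\Q_n$ directly to $\wt\Ho_{n-3}(\ol{\Q_n})$ (concentrated in degree $*=n-1$ because $\Q_n$ is a geometric lattice of rank $n-1$ with homology only in dimension $\rk-2$), and likewise sends the generator of the $[s]$-factor to $\wt\Ho_{|\bullet|-2}(\ol{\D[\bullet]^T(G_s,s)})$ in degree $|\bullet|$. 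So the intermediate isomorphism should read $\Ind_{G\times\sym}^{\FB}\wt\Ho_{*-2}(\ol{\Q_n})$, not $\Ind_{G\times\sym}^{\FB}\WH_*(\Q_n)$: the object $\WH_*(\Q_n)=\bigoplus_{p\in\Q_n}\wt\Ho_{*-2}(\ol{\Q_n^{\leq p}})$ is strictly larger than what the factorization produces, and no ``collapse under induction'' occurs or is needed. Consequently the bookkeeping problem you flag --- explaining why only the top homology of each generating poset survives while its lower intervals are redistributed among the other tensor factors --- is a phantom: that redistribution already happens at the poset level and is precisely the content of \eqref{eq:intervals} and Lemma \ref{lem:interval_factorization}. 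Once the correct functor is applied, the Wachs K\"unneth isomorphism converts the poset-level product into the stated tensor product with no further argument. With that correction your proof closes and is the paper's proof.
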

\begin{proof}
As stated above, the right-discretized order relation of a poset $P$ satisfies $\leq^{r\delta}\cong \coprod_{p\in P}P^{\leq p}$. 
This isomorphism is in fact monoidal in $P$: for posets $P_1$ and $P_2$ it is clear that there are natural poset isomorphisms
\[
\leq_1^{r\delta}\times \leq_2^{r\delta} \cong \coprod_{(p_1,p_2)} P_1^{\leq p_1}\times P_2^{\leq p_2} \cong \coprod_{(p_1,p_2)} (P_1\times P_2)^{\leq (p_1,p_2)}\cong (\leq_1\times \leq_2)^{r\delta}.
\]
We thus notice that the Whitney homology functor is just the composition of the functors
\[
\WH_*(P): P\mapsto \leq^{r\delta} \mapsto \wt\Ho_{*-2}(\ol{\leq^{r\delta}}) 
\]
where we extend the operation $Q\mapsto \ol{Q}$ to disjoint unions of lattices by removing the top and bottom element of each connected component.

We claim that each of these operations commute with products and induction. Indeed, both the product of GTCAs and induction are built from finite disjoint unions and products. Therefore, every functor that is both additive and multiplicative, i.e. monoidal with respect to $\coprod$ and $\otimes$ will automatically preserve products and inductions of GTCAs. In the case at hand, the functor $P \mapsto \leq^{r\delta}$ is clearly additive and multiplicative. The shifted homology functor was discussed in \S\ref{sec:poset_homology}, where its multiplicative property was observed and additivity is obvious.

It follows immediately from Lemma \ref{lem:interval_factorization} that the Whitney homology factors into a product of GTCAs, induced from $\wt\Ho_{*-2}(\ol{\Q_n})$ and $\wt\Ho_{*-2}(\ol{\D[\bullet]^T(G_s,s)})$. These posets are geometric lattices and thus have homology only in dimension $\rk - 2$, where $\rk(\Q_n)=n-1$ and $\rk(\D^T(G_s,s))=n$. 
\end{proof}

We shall see in the following section how the Dowling posets, and in particular their Whitney homology, are a key ingredient in understanding the (co)homology of orbit configuration spaces.
Before we proceed to discuss general orbit configuration spaces, there are special cases in which the Whitney homology already coincides with the cohomology. 
\begin{corollary}\label{cor:affine}
Let $X=\mathbb{A}^d$ be an affine space over either $\R$ or an algebraically closed field.
Let a finite group $G$ act almost freely on $X$ by affine transformations. Then for any finite $G$-invariant set $T\subset X$, the cohomology of $\Conf^\bullet_G(X,T)$ decomposes into the product of induced GTCAs given in \eqref{eq:Whitney_factorization} of Theorem \ref{thm:Whitney_factorization}.
\end{corollary}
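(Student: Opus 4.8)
The plan is to invoke the stratification spectral sequence of Petersen, which is the mechanism referenced in the discussion preceding Theorem~\ref{thm:intro-product}, and then argue that for an affine space $X=\mathbb{A}^d$ all differentials vanish so that the $E^1$-page — which by our combinatorial analysis is precisely the Whitney homology $\WH_*(\D[\bullet]^T(G,S))$ — computes the Borel--Moore homology (equivalently, via duality, the cohomology) on the nose. First I would recall that the inclusion $\Conf_G^n(X,T)\subseteq X^n$ exhibits the configuration space as the complement of the arrangement whose intersection combinatorics is modeled by $\D[n]^T(G,S)$; this is exactly the setup of \cite{BG} and is what makes the Dowling poset enter. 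Petersen's spectral sequence for this stratified space has $E^1$-page built from $\Ho^{BM}_*$ of the strata, twisted by the reduced homology of the order complexes of lower intervals — and by the interval factorization of \cite[Theorem~A]{BG} used in Lemma~\ref{lem:interval_factorization}, together with the identification of Theorem~\ref{thm:Whitney_factorization}, this $E^1$-page is exactly the product of induced GTCAs in \eqref{eq:Whitney_factorization}. (One must check, as in Example~\ref{ex:BM}, that the GTCA structure on $E^1$ induced by the coproduct on $\Conf_G^\bullet$ matches the GTCA structure on Whitney homology coming from the monoidality of $P\mapsto\WH_*(P)$; this is formal from the compatibility in Lemma~\ref{lem:interval_factorization}.)

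The heart of the argument is the vanishing of differentials. Here I would use that $X=\mathbb{A}^d$ is \emph{affine}: each stratum of $X^n$ cut out by the arrangement is itself an affine space (a diagonal subspace, cut out by linear conditions $x_i=g.x_j$, which are affine-linear since $G$ acts by affine transformations), hence has Borel--Moore homology concentrated in a single degree equal to (twice, in the scheme case) its dimension. The $E^1$-page is therefore concentrated on a single diagonal — the homological degree of the contribution of a codimension-$c$ stratum is pinned by $d$ times its dimension — and for degree reasons the differentials $d^r$, which change the relevant bidegree in an incompatible way, must all vanish. This is the ``$i$-acyclic'' phenomenon alluded to in Theorem~\ref{thm:intro-product} and Remark~\ref{rmk:differentials}: an affine space over $\R$ (resp.\ over an algebraically closed field) has $\Ho^*_c$ concentrated in top degree, so the map $\Ho^*_c(X)\to\Ho^*(X)$ vanishes, and the same holds for all powers $X^k$ and all strata. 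I would phrase the collapse cleanly by observing the strata are all $i$-acyclic and citing the collapse statement that Theorem~\ref{thm:intro-product} (in the form of Theorem~\ref{thm:ss_factorization}) already provides for $i$-acyclic $X$.

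Once collapse is established, $E^1=E^\infty=\Ho^{BM}_*(\Conf_G^\bullet(X,T))$ as GTCAs, and then I would translate from Borel--Moore homology to cohomology: for $X=\mathbb{A}^d$ smooth of pure dimension $d$, $\Conf_G^n(X,T)$ is a smooth variety (or manifold) of pure dimension $dn$, so Poincar\'e--Lefschetz duality gives $\Ho^{BM}_k(\Conf_G^n(X,T))\cong \Ho^{2dn-k}(\Conf_G^n(X,T))$ (resp.\ $\Ho^{dn-k}$ in the topological case), compatibly with the coproduct/product structure; hence the cohomology GTCA inherits the factorization \eqref{eq:Whitney_factorization}. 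The main obstacle is genuinely the differential-vanishing step: one must be careful that the spectral sequence, its grading conventions, and the identification of its $E^1$-page with Whitney homology are all set up so that ``each stratum is an affine space'' does force collapse — in particular that no differential can connect two classes living in the same total degree but coming from strata of different dimensions, which is where one needs the precise relationship (built into the construction) between the homological degree of a stratum's contribution and its dimension. Everything else is assembling pieces already in place: Lemma~\ref{lem:interval_factorization}, Theorem~\ref{thm:Whitney_factorization}, Petersen's spectral sequence, and duality.
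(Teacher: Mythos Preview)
Your argument is correct but takes a genuinely different route from the paper. The paper's proof is two sentences: it cites the Goresky--MacPherson formula directly, which already gives that the cohomology of the complement of an affine subspace arrangement equals the Whitney homology of its intersection poset (up to dimension shift), and then identifies that poset with $\D[\bullet]^T(G,S)$ via \cite[Theorem~C]{BG}; the factorization \eqref{eq:Whitney_factorization} is then just Theorem~\ref{thm:Whitney_factorization}. You instead reprove the Goresky--MacPherson statement in this case by running Petersen's spectral sequence, observing the closed strata $X^\beta$ are themselves affine spaces so their Borel--Moore homology is concentrated in one degree, deducing collapse at $E^1$, and then applying Poincar\'e duality. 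Your approach is more self-contained relative to the machinery the paper builds (and indeed anticipates Theorem~\ref{thm:ss_factorization} plus the $i$-acyclic collapse of Remark~\ref{rmk:differentials}), while the paper's approach is a clean external citation that avoids forward-referencing the spectral sequence. Either way the content is the same: affine strata force the spectral sequence to degenerate, which is exactly what Goresky--MacPherson encodes.
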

\begin{proof}
The claim follows from Goresky-MacPherson's formula \cite[Part III]{GM}: it shows that the complement for any arrangement of affine subspaces has cohomology given by the Whitney homology of the corresponding intersection poset, possibly up to simple dimension shifts. 
But in \cite[Theorem~C]{BG} we show that these intersection posets are exactly the corresponding Dowling posets.
\end{proof}

\begin{example} \label{ex:exact_product_formula}
Special cases to which Corollary \ref{cor:affine} applies include:
\begin{itemize}
\item The classical configuration space $\Conf^\bullet(\mathbb{R}^d)$, when $G=1$ and $T=\emptyset$.
\item The configuration space of a punctured space $\Conf^\bullet(\mathbb{R}^d\setminus \{r_1,\ldots,r_k\})$, when $G=1$ and $T=\{r_1,\dots,r_k\}$.
\item The complement of the type B/C and D root systems and their complexifications, when $G=\Z_2$ and either $T=\{0\}$ or $T=\emptyset$, respectively.
\item The complement of a Dowling arrangement $\Conf_{\mu_k}^\bullet(\C\setminus\{0\})$, when the $k$-th roots of unity act by scalar multiplication $\mu_k\curvearrowright \mathbb{A}$, and $T=\{0\}$. 
\end{itemize}
\end{example}

\subsection{Combinatorics of orbit configuration spaces}
\label{sec:stratification}

Let $X$ be a separated $G$-space (as in Conventions \ref{conventions}) with an almost free action, i.e. where the set of singular points for the $G$-action
\[
\Sing_G(X) := \bigcup_{1\neq g\in G} X^g
\]
is finite, denoting by $X^g$ the set of points fixed by $g$. Fix a finite $G$-invariant subset $T\subseteq X$, and let
\[S := \Sing_G(X)\cup T.\] 

Every element $\beta=(\{\wt{B}_1,\dots,\wt{B}_\ell\},z)\in\D^T(G,S)$ defines a subspace $X^\beta\subseteq X^n$ as follows.
The partial $G$-partition of $[n]$ specifies which coordinates are related by an application of $g\in G$ and the zero block specifies which coordinates land on an element $s\in\Sing_G(X)\cup T$.
More explicitly,
\begin{equation}\label{eq:Xbeta}
X^{\beta} := X^{\wt{B}_1}\times\cdots\times X^{\wt{B}_\ell}\times X^z\subseteq X^{B_1}\times\cdots\times X^{B_\ell}\times X^Z\cong X^n, 
\end{equation}
where for a block $B$ with $G$-coloring $b:B\to G$, the subspace $X^{\wt{B}}\subseteq X^B$ consists of functions $x_B:B\to X$ satisfying
\[b(i)^{-1}.x_B(i) = b(j)^{-1}.x_B(j) \quad \forall i,j\in B,\]
and $X^z$ is the single element of $X^Z$ defined by $z:Z\to S\subseteq X$.

The collection of $X^\beta$ as $\beta$ runs over $\D=\D^T(G,S)$ describes the closed strata in a stratification of $X^n$. The locally-closed strata $U^\beta$ whose disjoint union is $X^n$ are given by $U^\beta = X^\beta\setminus\cup_{\alpha\in\D^{\geq\beta}} X^\alpha$, and the open stratum corresponding to the trivial partition with empty zero block (the minimum of $\D$) is $\Conf_G^n(X,T)$. 
We refer the reader to \cite[Theorem~C]{BG} for more details on this stratification; although that theorem assumed that $X$ is connected, the stratification does not require this assumption.

 Consider the `incidence variety' over the Dowling poset:
\begin{center}
\begin{tikzpicture}
\node at (-1.5,2.5) {$\mathfrak{L}_n = \{ (x, \beta)\in X^n\times \D \mid x\in X^{\beta} \}$};
\node at (-3.5,0) {$\D^T(G,S)$};
\node at (2,2.5) {$\ni$};
\node at (2,0) {$\ni$};
\node at (3.2,2.5) {$(x,\beta)$};
\node at (3.2,0) {${\beta}$};
\draw[->] (-3.5,2) to node[left] {$\pi_2$} (-3.5,0.5);
\draw[|->] (3.2,1.9) to (3.2,0.7);
\end{tikzpicture}
\end{center}
The fiber over an element $\beta\in \D^T(G,S)$ is simply the topological space $X^{\beta}$. In this sense the incidence variety is the tautological bundle over $\D^T(G,S)$.

Letting the parameter $n$ vary, one sees that the collection of incidence varieties is in fact a GTCA of topological spaces, and the projection $\pi_2$ is a map of GTCAs.
A geometric version of Lemma \ref{lem:interval_factorization} is the following,

\begin{lemma}[\textbf{GTCA factorization of incidence varieties}] \label{thm:topology_factorization}
Let $X$ be an almost free $G$-space (as in Conventions \ref{conventions}), let $T\subset X$ be a finite $G$-invariant subset, and let $S=\Sing_G(X)\cup T$.
The GTCA of incidence varieties factors as a product of topological GTCAs, compatibly with the factorization of $\D[\bullet]^T(G,S)$:
\begin{center}
\begin{tikzpicture}
\node at (-1.7,2.5) {$\mathfrak{L}$};
\node at (-1.7,0) {$\D[]^T(G,S)$};
\node at (-0.5,2.5) {$\cong$};
\node at (1.1,2.4) {$\displaystyle\prod_{n=1}^\infty\Ind_{G\times\sym}^{\FB}X$};
\node at (2.7,2.5) {$\times$};
\node at (4.9,2.3) {$\displaystyle\prod_{[s]\in \F/G}\Ind_{\FB[{G_s}]}^{\FB} (*)^T_\bullet(s)$};
\node at (-0.5,0) {$\cong$};
\node at (1.1,0) {$\displaystyle\prod_{n=1}^\infty\Ind_{G\times\sym}^{\FB}(*)$}; 
\node at (2.7,0) {$\times$};
\node at (4.9,-0.1) {$\displaystyle\prod_{[s]\in \F/G}\Ind_{\FB[G_s]}^{\FB}(*)_\bullet^{T}(s)$};
\draw[->] (-1.7,2) to node[left] {$\pi_2$} (-1.7,0.5);
\foreach \x in {2.1,5.6}
\draw[->] (\x,1.9) to (\x,0.7);
\end{tikzpicture}
\end{center}
where $G\times \sym$ acts on $X$ via the provided action of $G$ and, as in Lemma \ref{lem:poset_factorization}, $(*)^T_\bullet(s)$ is either a point or the empty space.
\end{lemma}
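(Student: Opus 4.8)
The plan is to bootstrap this geometric factorization directly from the combinatorial one of Lemma \ref{lem:poset_factorization}, using that the fibers of $\pi_2$ are exactly the spaces $X^\beta$ of \eqref{eq:Xbeta} and that these are built block-by-block. The first step is to observe that $\mathfrak{L}_\bullet$ is a sub-GTCA of the product GTCA $X^\bullet\times\D[\bullet]^T(G,S)$: if $x\in X^\beta$ and $y\in X^\gamma$ then $(x,y)\in X^{\beta\cdot\gamma}$, because the product on $\D[\bullet]^T(G,S)$ is disjoint union of partial $G$-partitions, so the block set of $\beta\cdot\gamma$ is the union of the block sets of $\beta$ and $\gamma$ and its zero block is the disjoint union of theirs. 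Since $\pi_2$ is visibly a morphism of GTCAs with $\pi_2^{-1}(\parts)=X^\beta$, we get, as a GTCA of sets, $\mathfrak{L}_k=\coprod_{\beta\in\D[{[k]}]^T(G,S)}X^\beta$; and from \eqref{eq:Xbeta} the fiber assignment $\beta\mapsto X^\beta$ is multiplicative, $X^{\beta\cdot\gamma}\cong X^\beta\times X^\gamma$, the identification being the restriction of the GTCA multiplication of $\mathfrak{L}_\bullet$ (here one uses $X^{z\sqcup z'}=X^z\times X^{z'}$, a product of one-point spaces). Each $X^{\wt B}$ is, via the defining relations $b(i)^{-1}.x_B(i)=b(j)^{-1}.x_B(j)$, canonically homeomorphic to $X$, and each $X^z$ is a single point.

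Next I would compute the fiber of $\pi_2$ over each generator appearing in Lemma \ref{lem:poset_factorization}. Over the image of $*\in\Ind_{G\times\sym}^{\FB}(*)$, i.e. the single block $[1,\dots,n]$ with trivial coloring, the fiber is $X^{\wt{[n]}}\cong X$, and the residual action of the stabilizer $G\times\sym[n]$ is the given $G$-action with $\sym[n]$ acting trivially. Over the image of $*$ in the factor $\Ind_{\FB[G_s]}^{\FB}(*)^T_\bullet(s)$, i.e. a zero block with constant coloring $s$, the fiber is the single point $X^z$. Feeding this into the induction characterization fact of \S\ref{sec:tca} — in its straightforward product-of-factors version, where the required compatibility $A_x\otimes A_y\xrightarrow{\sim}A_{x\cdot y}$ is precisely the multiplicativity established above — yields that $\mathfrak{L}_\bullet$ decomposes as the product of the GTCAs induced from these fibers,
\[
\mathfrak{L}_\bullet\ \cong\ \prod_{n=1}^\infty\Ind_{G\times\sym}^{\FB}X\ \times\ \prod_{[s]\in S/G}\Ind_{\FB[G_s]}^{\FB}(*)^T_\bullet(s),
\]
and fiberwise over the base this isomorphism restricts, over each $\beta$, to the block-wise identification $\prod_{B\in\beta}X\xrightarrow{\sim}X^\beta$ of \eqref{eq:Xbeta} together with the equality on the zero-block part. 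Finally, applying $\pi_2$ collapses each $X$-fiber to a point, so the induced map on factors is the collapse $\Ind_{G\times\sym}^{\FB}X\to\Ind_{G\times\sym}^{\FB}(*)$ on the block factors and the identity on the $(*)^T_\bullet(s)$ factors; since $\pi_2$ is the map of Lemma \ref{lem:poset_factorization} on the base, this is exactly the commuting square in the statement.

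I expect the only real work to be the bookkeeping in the first step: verifying that the GTCA multiplication of $\mathfrak{L}_\bullet$, the $\FB$-action permuting the components $\coprod_\beta X^\beta$ (carrying $G$-colorings along morphisms as in the power co-GTCA), and the block-wise product decomposition \eqref{eq:Xbeta} are all mutually compatible in the precise sense demanded by the induction characterization. This is purely a matter of unwinding the definitions of $X^\beta$ and of the multiplication on $X^\bullet\times\D[\bullet]^T(G,S)$, and it is a mild simplification of the corresponding check for intervals in Lemma \ref{lem:interval_factorization}, since the fibers here carry no residual poset structure. No genuinely new input beyond \eqref{eq:Xbeta} and Lemma \ref{lem:poset_factorization} should be needed.
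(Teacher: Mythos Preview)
Your proposal is correct and follows essentially the same approach as the paper: both identify the fibers of $\pi_2$ over the generators from Lemma~\ref{lem:poset_factorization} (the diagonal $\Delta\cong X$ over a single block, the point $(s,\ldots,s)$ over a zero block) and then use multiplicativity of the fiber assignment $\beta\mapsto X^\beta$ from \eqref{eq:Xbeta} to conclude. The paper's proof is terser and leaves the induction characterization implicit, whereas you spell out the bookkeeping more carefully; but the argument is the same.
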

\begin{proof}
Lemma \ref{lem:poset_factorization} gives a factorization of $\D[\bullet]^T$. The generator $*$ of the factor indexed by $n$ corresponds to the diagonal $\Delta\subseteq X^n$, which is $(G\times \sym)$-equivariantly isomorphic to $X$. As for the factor labeled by an orbit $[s]$, the point in $(*)^T_n(s)$ corresponds to the point $\{(s,\ldots, s)\}\subseteq X^n$, which can be equivariantly identified with $(*)^T_n(s)$ itself.
The multiplication of these subspaces is given by the Cartesian product of topological spaces, and so the fiber over any $\beta\in\D$ is the product of the diagonals and points as prescribed by $\beta$ and described in \eqref{eq:Xbeta}.
\end{proof}

\subsection{The collision spectral sequence} \label{sec:spectral_sequence}

Now we have two compatible product decompositions: an order theoretic one in Lemma \ref{lem:interval_factorization} and a topological one in Lemma \ref{thm:topology_factorization}. After applying homology, we combine them to get a handle on the Borel--Moore homology of $\Conf_G^\bullet(X,T)$ (with coefficients as in Convention \ref{conventions}). 

The proposed factorization does not exist universally for all spaces (although it applies to a large class of them, see Remark \ref{rmk:differentials} and Corollary \ref{cor:i-acyclic} below). Rather, the factorization appears at a finite stage of a spectral sequence, associated with a natural filtration we now discuss.

We define a filtration on the homology $\Ho^{BM}_*(\Conf_G^n(X,T))$, natural with respect to proper maps. The idea here is that $F_k\Ho^{BM}_*$ are all the Borel-Moore homology classes that are restricted from partial configuration spaces in which only $k$ simultaneous collisions are excluded.

Recall from \S\ref{sec:stratification} that the Dowling poset $\D(X,T)$ indexes a stratification of $X^n$, whose closed strata are denoted by $X^\beta$ for $\beta\in\D(X,T)$ as defined in \eqref{eq:Xbeta}.
This Dowling stratification of $X^n$ gives rise to a diagram of open sets in $X^n$, where $\beta\in \D(X,T)$ corresponds to a partial configuration space: the open set 
\[
\Conf^{\beta}(X) = X^n \setminus \bigcup_{\zero < \alpha \leq \beta}X^\alpha
\]
where one only removes the subspaces $x_i= g.x_j$ and $x_i=t$ that contain $X^\beta$. Then $\beta < \beta'$ implies $\Conf^\beta(X) \supset \Conf^{\beta'}(X)$.

Since Borel-Moore homology is contravariant with respect to open inclusions, the containments $\Conf^n_G(X,T)\subseteq \Conf^\beta(X)$ induce a diagram of subspaces 
\[
 \Ho^{BM}_*(\Conf^n_G(X,T)) ^{\beta} := \operatorname{Im}\left(\Ho^{BM}_*(\Conf^\beta(X)) \to \Ho^{BM}_*(\Conf^n_G(X,T))\right)
\]
for which $\beta < \beta'$ now implies $\Ho^{BM}_*(\Conf^n_G(X,T)) ^{\beta} \subseteq \Ho^{BM}_*(\Conf^n_G(X,T)) ^{\beta'}$. Then the rank function on $\D(X,T)$ gives rise to a filtration as follows.

\begin{definition}[\textbf{Collision filtration}]
    With the notation of the previous paragraph, set
    \[
    F_k(\Ho^{BM}_*(\Conf^n_G(X,T))) := \sum_{\rk(\beta)\leq k} \Ho^{BM}_*(\Conf^n_G(X,T)) ^{\beta}.
    \]
    which is already defined at the level of Borel-Moore chains $F_k(C_*^{BM}(\Conf^n_G(X,T)))$, and more generally with coefficients in any sheaf on $X$.
\end{definition}

Clearly the filtration is natural in all inputs $(G,X,T)$ in the obvious way. More importantly, the filtration is compatible with the GTCA structure: the $\G$-action on $X^n$ permutes the sets $\Conf^{\beta}(X)$ while preserving $\rk(\beta)$, thus the induced action on homology respects the filtration. Furthermore, since for every $\beta\in \D[n]$ and $\beta'\in \D[m]$ there is a natural open inclusion $\Conf^{\beta\times \beta'} \subset \Conf^\beta \times \Conf^{\beta} \subset X^{n+m}$, it follows that there is a multiplication map
\[
\Ho^{BM}_*(\Conf^n_G(X,T)) ^{\beta} \otimes \Ho^{BM}_*(\Conf^m_G(X,T)) ^{\beta'} \to \Ho^{BM}_*(\Conf^{n+m}_G(X,T)) ^{\beta\times \beta'}
\]
and in particular $F_k \otimes F_\ell \to F_{k+\ell}$. We shall therefore use the GTCA notation \[F_k(\Ho^{BM}_*(\Conf^\bullet_G(X,T)))\] to refer to the respective filtrations on all powers of $X$ simultaneously. 
This is \emph{the collision filtration}.

The spectral sequence associated with this filtration admits a natural product decomposition of GTCAs, as the following shows.

\begin{theorem}[\textbf{Spectral sequence factorization}] \label{thm:ss_factorization}
Let $X$ be an almost free $G$-space as in Conventions \ref{conventions}, let $T\subseteq X$ be a finite $G$-invariant subset, and let $S=\Sing_G(X)\cup T$. Pick orbit representatives $s\in [s]$ for every $[s]\in \F/G$ and let $G_s\leq G$ denote its stabilizer.
The collision filtration gives rise to a spectral sequence of GTCAs converging to the GTCA $\Ho_{*}^{BM}(\Conf^{\bullet}_G(X,T))$, with $E^1\cong$
\begin{equation}\label{eq:ss_product_factorization}
\bigotimes_{n=1}^{\infty}\Ind_{G\times \sym}^{\FB} \left(\Ho^{BM}_*(X) \boxtimes \wt{\Ho}_{n-3}(\ol{\Q}_n)\right)
\otimes \bigotimes_{[s]\in \F/G} \Ind_{\FB[G_s]}^{\FB} \wt{\Ho}_{|\bullet|-2}\left(\ol{\D[\bullet]^T(G_s,s)}\right),
\end{equation}
where $\Ho^{BM}_*(X) \boxtimes \wt{\Ho}_{n-3}(\ol{\Q}_n)$ is placed in bidegree $(n-1,*)$ and $\wt{\Ho}_{|\bullet|-2}(\ol{\D[\bullet]^T(G_s,s)})$ is in bidegree $(\bullet,0)$. These GTCAs are in the category of bigraded modules with standard tensor product and (graded-commutative) symmetry.

Moreover, the spectral sequence and the product factorization are natural with respect to proper $G$-equivariant maps.
\end{theorem}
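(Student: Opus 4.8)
The plan is to combine the stratification spectral sequence of Petersen with the two compatible product decompositions already established. First I would recall the setup: the Dowling stratification of $X^n$ from \S\ref{sec:stratification} gives an associated graded for the collision filtration on $\Ho^{BM}_*(\Conf^n_G(X,T))$, and by Petersen's theorem on Borel--Moore homology of stratified spaces \cite{petersen}, the $E^1$-page is built out of the (reduced) homology of the order complexes of the intervals $\ol{\D[n](X,T)^{\leq \beta}}$ tensored with $\Ho^{BM}_*(X^\beta)$, summed over strata $\beta$ and with the appropriate degree shift by $\rk(\beta)$. The key point is that this $E^1$-page is \emph{exactly} the Whitney homology of the incidence variety: unwinding definitions, $E^1 = \WH_*$ of $\mathfrak{L}_\bullet$ in the sense that it is $\bigoplus_\beta \wt\Ho_{*-2}(\ol{\D[n]^{\leq\beta}}) \otimes \Ho^{BM}_*(X^\beta)$, the homology of the fibers of $\pi_2$ on the right-discretized order relation. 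So the first step is to identify $E^1$ with a homological invariant of the GTCA map $\pi_2\colon \mathfrak{L}_\bullet \to \D[\bullet]^T(G,S)$, and to observe that this identification is multiplicative because the collision filtration is multiplicative (as checked just before the theorem statement) and Petersen's spectral sequence is compatible with external products over open inclusions.

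Next I would invoke the two factorization results directly. Lemma \ref{lem:interval_factorization} factors the right-discretized order relation $\leq^{r\delta}_\bullet$ of $\D[\bullet]^T(G,S)$ as a product of induced poset GTCAs, and Lemma \ref{thm:topology_factorization} factors the incidence variety GTCA $\mathfrak{L}_\bullet$ compatibly. Since both factorizations are over the same factorization of the base $\D[\bullet]^T(G,S)$ from Lemma \ref{lem:poset_factorization}, and the relevant homological functor ($Q \mapsto \wt\Ho_{*-2}(\ol{Q})$ extended to disjoint unions componentwise, tensored appropriately with $\Ho^{BM}_*$ of the topological fiber) is both additive and multiplicative, the argument in the proof of Theorem \ref{thm:Whitney_factorization} applies verbatim: additive-and-multiplicative functors commute with induction and with the Day-convolution product of GTCAs, since both are assembled from finite disjoint unions and products. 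This yields the product decomposition of $E^1$: each factor $\Ind^{\FB}_{G\times\sym}(\ldots)$ contributes its topological fiber $X$ (from Lemma \ref{thm:topology_factorization}) paired with the partition-lattice homology $\wt\Ho_{n-3}(\ol{\Q}_n)$ (from Lemma \ref{lem:interval_factorization}), giving $\Ho^{BM}_*(X) \boxtimes \wt\Ho_{n-3}(\ol\Q_n)$; each factor indexed by $[s] \in S/G$ contributes a point or empty space topologically, and the Dowling interval homology $\wt\Ho_{|\bullet|-2}(\ol{\D[\bullet]^T(G_s,s)})$. The bidegree bookkeeping ($(n-1,*)$ for the first type, $(\bullet,0)$ for the second) comes from matching the rank shift in the spectral sequence with the homological degree of the order complex, using $\rk(\Q_n) = n-1$ and $\rk(\D^T(G_s,s)) = n$ as recorded in the proof of Theorem \ref{thm:Whitney_factorization}.

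Finally I would address that the spectral sequence is one of GTCAs and is natural with respect to proper $G$-equivariant maps. For the GTCA structure: the multiplication on $\Ho^{BM}_*(\Conf^\bullet_G(X,T))$ comes from the open inclusions $\Conf^{n+m}_G(X,T) \hookrightarrow \Conf^n_G(X,T) \times \Conf^m_G(X,T)$, which respect the collision filtration ($F_k \otimes F_\ell \to F_{k+\ell}$, already noted), hence induce a pairing on the associated spectral sequences commuting with differentials; on $E^1$ this pairing agrees under the identification above with the GTCA product on the Whitney-homology side, which is exactly the product appearing in \eqref{eq:ss_product_factorization}. For naturality: a proper $G$-equivariant map $f\colon X \to Y$ carrying $T_X$ into $T_Y$ induces a proper map on the $X^n$ compatible with the Dowling stratifications, hence a morphism of filtered complexes, hence a morphism of spectral sequences; since all of Lemmas \ref{lem:poset_factorization}, \ref{lem:interval_factorization}, \ref{thm:topology_factorization} were noted to be natural, the product decomposition of $E^1$ is natural too.

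The main obstacle I expect is not any single hard estimate but the careful identification of Petersen's $E^1$-page with the Whitney homology of the incidence variety as a GTCA — that is, checking that the degree shifts, the signs coming from the graded-commutative symmetry in the Day convolution, and the way $\Ho^{BM}_*(X^\beta)$ (a product of diagonals, each $\cong X$, and points) factors through the incidence-variety decomposition all line up on the nose with the multiplicative structure. Once that bookkeeping is in place, the factorization is a formal consequence of the additive-multiplicative functor principle already used for Theorem \ref{thm:Whitney_factorization}. A secondary subtlety is verifying that Petersen's spectral sequence for stratified spaces is genuinely multiplicative with respect to external products along open inclusions — this requires that the construction of the spectral sequence (e.g.\ via the filtration of Borel--Moore chains, or via a suitable hypercohomology filtration in the scheme case) be visibly compatible with the Künneth isomorphism of Convention \ref{conventions}; I would either cite this from \cite{petersen} or spell out the chain-level compatibility.
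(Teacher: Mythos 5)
Your proposal follows essentially the same route as the paper's proof: Petersen's stratification spectral sequence (with the $E^1$-description coming from \cite[Theorem~D]{BG}), combined with the compatible factorizations of Theorem \ref{thm:Whitney_factorization} (via Lemma \ref{lem:interval_factorization}) and Lemma \ref{thm:topology_factorization} through the additive-and-multiplicative functor principle. The one step you assume rather than argue --- and which the paper devotes its final two paragraphs to --- is that Petersen's filtration of the resolution of $j_!j^{-1}\mathcal{F}$ by $\rk(\beta)$ coincides at the chain level with the collision filtration defined by restriction from the partial configuration spaces $\Conf^\beta(X)$; since the theorem is stated specifically for the collision filtration's spectral sequence, this identification should be made explicit.
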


\begin{proof}
In \cite[Lemma 4.12]{petersen} Petersen constructs a spectral sequence of TCAs converging to $\Ho^{BM}_*(\Conf^\bullet_G(X,T))$, which by  \cite[Theorem~D]{BG} has $E^1$ given by
\begin{equation*}
E^1_{p,q}[\bullet] = \bigoplus_{\substack{\beta\in \D[\bullet] \\ \rk(\beta) = p} } \Ho^{BM}_q\left(X^\beta\right) \otimes \wt\Ho_{p-2}\left(\ol{\D[\bullet]^{\leq \beta}}\right),
\end{equation*}
where $\D[n]= \D[n]^T(G,S)$. 
Note that while Petersen explicitly considered only the symmetric group action on each term, his arguments apply to the $G$-equivariant context and in fact give a spectral sequence of GTCAs.

We use Theorem \ref{thm:Whitney_factorization} and Lemma \ref{thm:topology_factorization} (applying Borel--Moore homology to the second), in which we have factorizations of two GTCAs of graded $\K$-modules. By taking their pointwise tensor product over the GTCA $\D[\bullet]^T(G,S)$, one obtains the factorization stated here. The details are completely straightforward and thus omitted.

It remains to show that Petersen's construction in fact coincides with the spectral sequence associated with our collision filtration.
As a quick reminder, let $j:\Conf^n_G(X,T)\into X^n$ denote the inclusion. Starting with a sheaf $\mathcal{F}$ on $X^n$, Petersen resolves the sheaf $j_! j^{-1} \mathcal{F}$ which computes $\Ho^*_c(\Conf^n_G(X,T);\mathcal{F})$ by the complex
\[
\mathcal{F} \to \bigoplus_{0<\beta\in \D} \mathcal{F}|_{X^\beta} \to \bigoplus_{0<\beta_1<\beta_2} \mathcal{F}|_{X^{\beta_1}} \to \ldots \to \bigoplus_{0<\beta_1<\ldots< \beta_n} \mathcal{F}|_{X^{\beta_n}} \to \ldots
\]
and filters it by $\rk(\beta_n)$. This filtration produces the aforementioned spectral sequence. Applying the functor computing global sections of the Verdier dual $R\Gamma \circ \mathbb{D}$ to this complex gives a filtered chain complex computing $\Ho^{BM}_*(\Conf^n_G(X,T);\mathcal{F})$.

Similarly, for a partial configuration space $j^\beta: \Conf^\beta(X) \into X^n$, the sheaf $(j^\beta)_! (j^\beta)^{-1}\mathcal{F}$ is resolved in the same way, but with the poset $\D$ replaced by the subposet $\D^{\leq \beta}$. Thus the complex computing $\Ho^{BM}_*(\Conf^\beta(X))$ naturally sits as a subcomplex of the one computing $\Ho^{BM}_*(\Conf^n_G(X,T))$, and this inclusion realizes the restriction from the former to the latter.

Since the subcomplex of Petersen's filtration degree $p$ is precisely the sum over all terms in $\D^{\leq \beta}$ for $\rk(\beta)\leq p$, it is precisely the chains restricted from those $\Conf^\beta$. This is our definition of the collision filtration.
\end{proof}

\begin{remark}
Dan Peterson and Phil Tosteson communicated to us that they had each observed this factorization in the special case that $G$ is trivial and $S$ is empty: in the language of species, one views this as a composition with an exponential.
For more general $G$ and $S$, we have more elaborate group inductions as well as additional factors corresponding to non-free orbits.
\end{remark}

\begin{remark}[\textbf{Factorization at the space level}]
One could build a more systematic framework to show a factorization at the space level, making sense of the object 
\[\bigsqcup_{\beta\in\D} X^\beta\times\D^{\leq\beta}\]
in a category that allows us to pair together topological spaces and posets. 
While the factorization is clear in the category of sets, one needs the additional structure on the objects in order to apply homology functors.
\end{remark}

\begin{remark}[\textbf{Differentials}]
\label{rmk:differentials}
This result would still be of limited use if nothing could be said about the differentials of the spectral sequence. Fortunately, several general statements could be made. 
\begin{itemize}
\item When $X$ is a smooth projective variety, there can be at most one nonzero differential (by a standard weight argument). This nontrivial differential is completely determined by what it does to the generators of the GTCA.
\item When $X$ is $i$-acyclic (the map $\Ho_*(X)\to \Ho^{BM}_*(X)$ is zero), all differentials must vanish \cite{Arabia,petersen-formal}. For example, a product of any space by affine spaces and copies of the multiplicative group $\mathbb{G}_m$ is $i$-acyclic.
In this case, it is not known whether the Borel-Moore homology GTCA factors as a product of inductions, even though the GTCAs are isomorphic as species (forgetting the algebra structure).
\end{itemize}
Other general statements involve the representation stability, which is the subject of the next section. In short, the kernel and cokernel of every differential are finitely generated modules for certain algebras actions, and this constrains the $\G$-representation that may occur.
\end{remark}

\section{Representation stability and secondary stability}\label{sec:repstability}

The product formula of Theorem \ref{thm:ss_factorization} gives a good handle on how to generate the $E^1$-page computing the homology of configuration spaces as a GTCA. Unfortunately, the differentials are difficult to describe in terms of these generators. The main goal of this section is to address this difficulty and shed light on the structure of the homology $\Ho^{BM}_*(\Conf^n_G(X,T))$ using the tools of representation stability.

Stated vaguely, this section will show that under mild hypotheses on $X$, the Borel-Moore homology groups $\Ho^{BM}_*(\Conf_G^n(X,T))$ stabilize as sequences of representations of the various wreath products. One of the central contributions of the representation-stability point of view is that this notion of stability is best understood as the finite-generation of modules over GTCAs. Then the Noetherian property of certain GTCAs ensures that stability is a robust property, preserved under subquotients and extensions. It is therefore enough to establish finite-generation at the $E^1$-page of a spectral sequence converging to the modules in question.

\subsection{Geometric criteria for finite-generation} \label{subsec:generation_locus}
Here we present a simple geometric technique for identifying many structures of finitely-generated modules on a bigraded GTCA $E_{*,*}[\bullet]$ as occurs in the $E^1$-page of the spectral sequence computing $\Ho^{BM}_*(\Conf_G^\bullet(X,T))$. Very briefly, it says that there exists a polygon whose corners govern the finitely-generated module structures on $E$. 

Comparing GTCA generators appearing at different $\bullet=n$ gets rather confusing. An effective trick to simplify this problem is to work as though all generators appear already in $\bullet=1$, which is formally similar to adjoining an $n$-th root to generators $\in E[n]$. Making this approach systematic, we make the following definition.

\begin{definition}[\textbf{The generation locus}] \label{def:genlocus}
Let $E_{*,*}[\bullet]$ be a bigraded GTCA, generated by the finite-dimensional subspaces $V_i \leq E_{p_i,q_i}[n_i]$ with $i\in \NN$.

For every $e\in E_{p,q}[n]$ write $v(e):= \left( \frac{p}{n} , \frac{q}{n} \right) \in \mathbb{Q}^2$ and define the \emph{generation locus} to be
\[
\Gen := \left\{ v(V_i) \mid i\in \NN \right\} \subset \mathbb{Q}^2.
\]
These are the (fractional) bidegrees in which generators would have appeared had they all existed already when $\bullet=1$.
\end{definition}

Consider a point $v\in \mathbb{Q}^2$. Let $A_v\leq E$ be the subalgebra generated by all subspaces $V_i$ with $v(V_i) = v$, and note that multiplication by $A_v$ preserves the subspaces 
\begin{equation} \label{eq:submod}
E_{(p,q)+\bullet v} [n+\bullet]
\end{equation}
where we declare that $E_{a,b}[c]=0$ unless $a,b,c\in \ZZ_{\geq 0}$. Thus $E$ breaks up into a collection of $A_v$-submodules.

\begin{lemma}[\textbf{Corner criterion for finite generation}]\label{lem:corner_fin_gen}
Let $E_{*,*}$ be a bigraded GTCA, generated by subspaces $V_i\leq E_{p_i,q_i}[n_i]$.
Suppose that for every point $v\in \Gen$ and fixed $n\in \NN$ the corresponding generating subspace 
\[\bigoplus_{\substack{v(V_i)=v \\ n_i = n}} V_i\]
is finite dimensional.
Then for every isolated corner $v_0$ of the closed convex hull $\ol{\conv(\Gen)}$, the $A_{v_0}$-submodules $E_{(p,q)+\bullet v_0}[n+\bullet]$ that make up $E_{*,*}$ are finitely-generated.

Quantitatively, if $L\subset \mathbb{R}^2$ is a line meeting $\ol{\conv(\Gen)}$ only at a corner $v_0\in \Gen$, and there are no other points in $\Gen$ within distance $\epsilon>0$ from $L$, then $E_{(p,q)+\bullet v_0}[n+\bullet]$ is generated as an $A_{v_0}$-module by the finitely many products $V_{i_1}\cdot \ldots \cdot V_{i_k}$ with $\sum n_{i_j} \leq \frac{\operatorname{dist}(nL,(p,q))}{\epsilon},$ and each for these is finite-dimensional.
\end{lemma}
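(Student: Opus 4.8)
\medskip
\noindent\textbf{Plan of proof.}

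I would base the whole argument on one device: a linear ``height'' functional on bidegrees, read off from the supporting line $L$, which (i) is additive across the GTCA multiplication, (ii) vanishes on exactly the generators of $A_{v_0}$ and is bounded below by $\epsilon$ on all the other generators, and (iii) is constant along each strand $E_{(p,q)+\bullet v_0}[n+\bullet]$. First I would set up the geometry: since $v_0\in\Gen$ is a corner of $\ol{\conv(\Gen)}$ isolated by the line $L$, let $\phi$ be the signed distance to $L$, oriented so that $\ol{\conv(\Gen)}\subseteq\{\phi\geq0\}$; then $\phi(v_0)=0$ and, by the hypothesis on $L$ and $\epsilon$, $\phi(v)\geq\epsilon$ for every $v\in\Gen\setminus\{v_0\}$. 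Homogenizing, I set $\Phi(e):=N\,\phi\!\left(\frac{P}{N},\frac{Q}{N}\right)$ for homogeneous $e\in E_{P,Q}[N]$ with $N\geq1$; since $\phi$ is affine, $\Phi(e)$ is a linear function of $(P,Q,N)$ -- in fact the signed distance of $(P,Q)$ from $NL$ -- so it is additive, $\Phi(e\cdot e')=\Phi(e)+\Phi(e')$. Property (ii) is then immediate: $\Phi(V_i)=n_i\,\phi(v(V_i))$ is $0$ exactly when $v(V_i)=v_0$ and is $\geq n_i\epsilon\geq\epsilon$ otherwise. For (iii), a short linear computation using $\phi(v_0)=0$ shows $\Phi$ is constant on the strand, with value the signed distance $\delta$ of $(p,q)$ from $nL$; if $\delta<0$ the strand contains no nonzero product of generators (these all have $\Phi\geq0$, as $\ol{\conv(\Gen)}\subseteq\{\phi\geq0\}$) and so is zero, and otherwise $\delta=\dist(nL,(p,q))=:\Phi_0$.

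The heart of the argument is a degree bound on factorizations. Because the $V_i$ generate $E$ as a GTCA, every homogeneous element of a strand piece $E_{(p,q)+mv_0}[n+m]$ is a $\K$-linear combination of products $V_{i_1}\cdots V_{i_k}$ -- images of the iterated multiplication maps applied to induced tensor powers -- of total degree $n+m$ in that bidegree. I would fix such a product and split its factors into $J_0=\{\,j:v(V_{i_j})=v_0\,\}$ and $J_1=\{\,j:v(V_{i_j})\neq v_0\,\}$. Additivity together with (ii) and (iii) gives $\Phi_0=\Phi(V_{i_1}\cdots V_{i_k})=\sum_{j\in J_1}\Phi(V_{i_j})\geq\epsilon\sum_{j\in J_1}n_{i_j}$, so the $J_1$-subproduct has total degree $\sum_{j\in J_1}n_{i_j}\leq\Phi_0/\epsilon=\dist(nL,(p,q))/\epsilon=:D$. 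Using graded-commutativity to permute the tensor factors -- at worst introducing a sign -- $V_{i_1}\cdots V_{i_k}$ equals a unit times the product of $\prod_{j\in J_1}V_{i_j}$ with $\prod_{j\in J_0}V_{i_j}$; the second factor is a product of generators of $A_{v_0}$ and so lies in $A_{v_0}$, while a bidegree count shows $\prod_{j\in J_1}V_{i_j}$ again lands in some graded piece of the same strand. Hence the strand is spanned over $A_{v_0}$ by those products $V_{i_1}\cdots V_{i_k}$ of total degree $\leq D$ that lie in it.

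It remains to see that this spanning set is finite, with finite-dimensional terms. Each product $V_{i_1}\cdots V_{i_k}$ is the image of $\Ind^{\FB}_{\G[n_{i_1}]\times\cdots\times\G[n_{i_k}]}\bigl(V_{i_1}\boxtimes\cdots\boxtimes V_{i_k}\bigr)$, an induction of a finite tensor product of finite-dimensional spaces, and so is finite-dimensional. Moreover $\Gen$ is bounded, so for each $N\leq D$ the generators $V_i$ with $n_i=N$ occupy only finitely many bidegrees of $E[N]$, and in each the finite-dimensionality hypothesis bounds their span; hence there are only finitely many $V_i$ with $n_i\leq D$, and therefore only finitely many products of them of total degree $\leq D$. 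This gives the quantitative statement; the qualitative statement -- finite generation of the strands at each isolated corner -- then follows by choosing a supporting line $L$ meeting $\ol{\conv(\Gen)}$ only at $v_0$ and an $\epsilon>0$ small enough to keep the rest of $\Gen$ out of its $\epsilon$-neighborhood. The same reasoning even covers the degenerate possibility $v_0\notin\Gen$, in which $A_{v_0}=\K$ and the conclusion reduces to the strand being finite-dimensional.

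The step I expect to demand the most care is the bookkeeping around the strand: checking that $\Phi$ is genuinely constant along $E_{(p,q)+\bullet v_0}[n+\bullet]$ and, more delicately, that after peeling off the $A_{v_0}$-factors the leftover product $\prod_{j\in J_1}V_{i_j}$ still lies in that strand, so that ``generation as an $A_{v_0}$-module'' is literally what has been established. Everything else -- additivity of $\Phi$, the degree estimate, the harmlessness of reordering in a graded-commutative GTCA, and the counting behind finiteness -- is routine once $\Phi$ is in hand; the conceptual content lies entirely in taking the functional dual to the supporting line $L$.
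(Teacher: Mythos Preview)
Your approach is essentially identical to the paper's: both introduce the linear height functional $\Phi(e)=N\,\phi(P/N,Q/N)$ coming from the supporting line $L$, use additivity under multiplication to separate the $A_{v_0}$-factors from the rest, and bound the total degree of the non-$v_0$ part by $\dist(nL,(p,q))/\epsilon$.

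There is one small gap in your finiteness step. You assert that ``$\Gen$ is bounded'' and deduce from this that there are only finitely many $V_i$ with $n_i\leq D$. Boundedness of $\Gen$ is \emph{not} among the hypotheses of the lemma, and in general an unbounded convex region can still have isolated corners. The paper handles this differently: since $v_0$ is a genuine corner, the tangent cone of $\ol{\conv(\Gen)}$ at $v_0$ is a proper cone containing all of $\Gen$, and intersecting this cone with the strip $\{0\leq \phi\leq \lambda/N\}$ cuts out a compact triangle containing only finitely many points of $\frac{1}{N}\ZZ^2$. Alternatively, you could repair your argument by observing that the $J_1$-products you care about must land in the strand at some position $m'$ with $n+m'\leq D$, so their total bidegree $(p,q)+m'v_0$ is bounded; since individual bidegrees are non-negative integers, each factor's bidegree is then bounded as well. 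Either fix is routine, and with it your argument is complete and matches the paper's.
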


\begin{proof}
Let $L(x,y) = ax+by + c = 0$ be a defining equation for a line $L$ meeting $\ol{\conv(\Gen)}$ at the corner $v_0\in \Gen$ and is at least distance $\epsilon>0$ to any other point in $\Gen$. Normalize the equation $L$ so that $|L(v)| = \operatorname{dist}(v,L)$ for every $v\in \mathbb{R}^2$ and takes non-negative values on $\Gen$. In particular, one has $L(v)\geq \epsilon$ for all $v (\neq v_0)\in \Gen$.

Next, with the notation of Definition \ref{def:genlocus} above, define a `height' function $|e| := n\cdot L(v(e)) = ap+bq+nc$ when $e\in E_{p,q}[n]$. Linearity in $(p,q,n)$ implies $|e\cdot e'| = |e|+|e'|$ for every two homogeneous elements of $E$. Furthermore, the hypotheses of the previous paragraph imply
\begin{itemize}
    \item $|V_i| = 0$ if and only if $v(V_i) = v_0$,
    \item and otherwise $|V_i|\geq n_i\epsilon$.
\end{itemize}

Consider the $A_{v_0}$-module $E_{(p,q)+\bullet v_0}[n+\bullet]$: its height is given by the constant $\lambda := nL(\frac{p}{n},\frac{q}{n})$ since $L(v_0)=0$ (note that $\lambda$ is well defined even if $n=0$). By the assumption on $E_{*,*}$, this subspace is generated by various products $V_{i_1}\ldots V_{i_k}$. But since the height is additive and non-negative, products of generators that produce elements of height $\lambda$ must have $|V_i|\leq \lambda$.

We claim that for every fixed $\lambda \in \mathbb{R}$ there are finitely many elements that generate all spaces $V_i$ with $v(V_i)\neq v_0$ and $|V_i|\leq \lambda$. Indeed, the second property of the height above implies $n_i\epsilon\leq \lambda$, so $n_i \leq \lambda/\epsilon$. Furthermore, the triangle bounded by the cone angle of $\ol{\conv(\Gen)}$ at $v_0$ and $L(x,y)\leq \lambda$ is compact (see Figure \ref{fig:triangle}), so meets every lattice $\frac{1}{n}\mathbb{Z}$ only at finitely many points. Since the generating spaces $V_i$ with $|V_i|\leq \lambda$ must correspond to bounded $n_i\leq \lambda/\epsilon$ and have $v(V_i)$ among the corresponding finite sets of lattice points, there is a finite list of elements in $E$ that generates all of them by hypothesis.

It follows that the $A_{v_0}$-module $E_{(p,q)+\bullet v_0}[n+\bullet]$ is generated under multiplication by $A_{v_0}$ by a finite list of elements -- those in products $V_{i_1}\ldots V_{i_k}$ of total height $\lambda$. Again, the lower bound on non-zero height implies that in all such products $\epsilon\sum n_{i_j} \leq \lambda$, bounding the possible degrees as claimed.
\begin{figure}[ht]
\begin{tikzpicture}[scale=2]
\node at (-0.1,1.1) {$v_0$};
\node at (1.3,1.1) {$L(x,y)=\lambda$};
\node at (1,1.55) {$L(x,y)=0$};
\tikzstyle{every node}=[draw,circle,fill,minimum size=2.5pt,inner sep=0pt]
\foreach \x in {0,1,2,3,4,5}
{\node at (0,\x/5) {};};
\foreach \x in {0,1,2,3,4}
{\node at (1/5,\x/5) {};};
\foreach \x in {0,1,2,3}
{\node at (2/5,\x/5) {};};
\foreach \x in {0,1,2}
{\node at (3/5,\x/5) {};};
\foreach \x in {0,1}
{\node at (4/5,\x/5) {};};
\node at (1,0) {};
\draw (-0.5,.5)--(.5,1.5);
\draw (-0.5+.3,.5-0.4)--(.5+.3,1.5-0.4);
\draw[fill=gray, fill opacity=0.3, color=gray] (0,0)--(0,1)--(1,0)--(0,0);
\draw[fill=blue, fill opacity=0.3, color=blue]
(0,1)--(0,.3)--(1.3/2-.3,1.3/2)--(0,1);
\end{tikzpicture}
\caption{Triangle bounded by cone angle of $v_0$ and $L(x,y)\leq \lambda$}
\label{fig:triangle}
\end{figure}
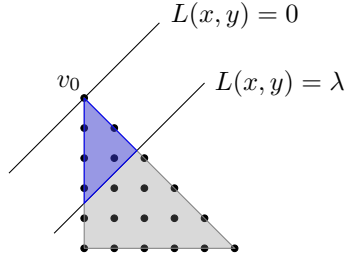
\end{proof}

In practice, one is most often interested in diagonals $\displaystyle\bigoplus_{p+q=i}E_{p,q}$ and their finite-generation as modules over GTCAs. The following slight adjustment to the corner criterion argument of Lemma \ref{lem:corner_fin_gen} extends the finite-generation result to the entire diagonal.
\begin{lemma}[\textbf{Slope $-1$ criterion for finite generation of diagonals}]\label{lem:slope_1}
Let\\ $E_{*,*}$ be as in Lemma \ref{lem:corner_fin_gen} -- a bigraded GTCA generated by subspaces $V_i\leq E_{p_i,q_i}[n_i]$ such that for every $v\in \Gen$ and $n\in \NN$ the corresponding generating subspace \[\bigoplus_{\substack{v(V_i)=v\\ n_i=n}} V_i\] is finite dimensional.

If $v_0=(r_0,t_0)$ is a corner of the closed convex hull $\ol{\conv(\Gen)}$ and the line $L$ of slope $(-1)$ through $v_0$ is at least of distance $\epsilon>0$ to any other point in $\Gen$ (see Figure \ref{fig:slope-1} and compare with Lemma \ref{lem:corner_fin_gen}), then for every $i\in\ZZ$ the diagonals
\begin{equation}\label{eq:diagonal}
    \bigoplus_{p+q = i+\bullet(r_0+t_0)}E_{p,q}[n+\bullet]
\end{equation}
form a finitely-generated $A_{v_0}$-module. More explicitly, this module is generated by products $V_{i_1}\cdot \ldots \cdot V_{i_k}$ with $\sum n_{i_j} \leq \frac{\operatorname{dist}(nL,(i,0))}{\epsilon}$.
\end{lemma}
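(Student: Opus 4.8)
The plan is to recycle the proof of Lemma~\ref{lem:corner_fin_gen} essentially word for word, with the slope $-1$ line playing the role of $L$. The only genuinely new observation required is a one-line computation: each diagonal \eqref{eq:diagonal} is simultaneously an $A_{v_0}$-submodule of $E_{*,*}$ and a single level set of the ``height'' function built from $L$. Once that is verified, the compactness argument of Lemma~\ref{lem:corner_fin_gen} carries over verbatim.

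First I would set up the height. Let $L$ be the slope $-1$ line through $v_0=(r_0,t_0)$, normalized so that $|L(v)|=\dist(v,L)$ for all $v\in\RR^2$; as in Lemma~\ref{lem:corner_fin_gen} (and as Figure~\ref{fig:slope-1} shows) $L$ meets $\ol{\conv(\Gen)}$ only at $v_0$, hence is a supporting line of $\ol{\conv(\Gen)}$, and we choose its sign so that $L\geq 0$ on $\Gen$, with $L\geq\epsilon$ on $\Gen\setminus\{v_0\}$ by hypothesis. For $e\in E_{p,q}[m]$ put $|e|:=m\cdot L(v(e))=\frac{1}{\sqrt2}(p+q-m(r_0+t_0))$. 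This is linear in $(p,q,m)$, hence additive on products; moreover $|V_i|=0$ precisely when $v(V_i)=v_0$ (i.e.\ exactly when $V_i$ is one of the generators of $A_{v_0}$), and $|V_i|\geq n_i\epsilon$ otherwise. Note also that any product $V_{i_1}\cdots V_{i_k}$ has $v$-value a convex combination of the $v(V_{i_j})$, so lies in $\conv(\Gen)$ and therefore has nonnegative height.

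Next I would record the two structural facts about the module \eqref{eq:diagonal}. Writing its $\bullet=k$ summand as $\bigoplus_{p+q=i+k(r_0+t_0)}E_{p,q}[n+k]$, multiplication by a generator $V_i$ of $A_{v_0}$ (so $(p_i,q_i)=n_i(r_0,t_0)$) increases $p+q$ by $n_i(r_0+t_0)$ and the index $m$ by $n_i$, hence sends the $k$-th summand into the $(k+n_i)$-th; so \eqref{eq:diagonal} is indeed an $A_{v_0}$-submodule. And on this submodule the height equals the \emph{constant} $\lambda:=\frac{1}{\sqrt2}(i-n(r_0+t_0))$, independent of $k$ --- one recognizes $|\lambda|$ as $\dist(nL,(i,0))$, where $nL$ is the dilate of $L$ by $n$. (If $\lambda<0$ the module is zero by the nonnegativity remark, so assume $\lambda\geq 0$.) Now the finiteness step goes exactly as in Lemma~\ref{lem:corner_fin_gen}: since $E_{*,*}$ is generated by the $V_i$, every element of \eqref{eq:diagonal} is a sum of products $V_{i_1}\cdots V_{i_k}$, and additivity plus nonnegativity of the height force $|V_{i_j}|\leq\lambda$ for each factor and $\epsilon\sum_j n_{i_j}\leq\lambda$ over the factors not generating $A_{v_0}$. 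Those generators $V_i$ with $v(V_i)\neq v_0$ and $|V_i|\leq\lambda$ have $n_i\leq\lambda/\epsilon$ and $v(V_i)\in\conv(\Gen)\cap\{L\leq\lambda\}$, which is compact because $L$ touches $\ol{\conv(\Gen)}$ only at $v_0$ (so $\conv(\Gen)$ has no recession direction parallel to $L$, and being a supporting line forces every recession direction to strictly increase $L$); together with the hypothesis that each $\bigoplus_{v(V_i)=v,\,n_i=n}V_i$ is finite dimensional, only finitely many $V_i$ arise, and they generate \eqref{eq:diagonal} over $A_{v_0}$ by products of total degree $\sum n_{i_j}\leq\lambda/\epsilon=\dist(nL,(i,0))/\epsilon$, as claimed.

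As for the main difficulty: there isn't really one --- all the work is in Lemma~\ref{lem:corner_fin_gen}, and the content here is the bookkeeping of the previous paragraph, namely that \eqref{eq:diagonal} is at once an $A_{v_0}$-submodule and a single height level set. The one place where a little care is needed is the compactness of $\conv(\Gen)\cap\{L\leq\lambda\}$, which is exactly where the hypothesis that $L$ meets $\ol{\conv(\Gen)}$ only at $v_0$ (rather than merely that $v_0$ be a corner) is used.
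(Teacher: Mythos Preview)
Your proposal is correct and follows essentially the same approach as the paper's proof: both set up the height function from the slope $-1$ line, observe that the entire diagonal \eqref{eq:diagonal} has constant height $\lambda$ (this is the key computation exploiting slope $-1$), and then invoke the finiteness argument from Lemma~\ref{lem:corner_fin_gen}. Your version is slightly more explicit in places (e.g.\ verifying that \eqref{eq:diagonal} is an $A_{v_0}$-submodule, and arguing compactness via recession directions rather than just referring back to the cone-angle triangle), but these are cosmetic differences.
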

\begin{figure}[ht]
\begin{tikzpicture}[scale=2]
\node at (0.1,2.1) {$v_0$};
\node at (1.3,0.9) {$L$};
\draw[<->] (1.5+.05,.5-.05) -- node[pos=.3,below] {$\epsilon$} (1.3+.05,.3-.05);
\tikzstyle{every node}=[draw,circle,fill,minimum size=2.5pt,inner sep=0pt]
\node at (0,0) {};
\node at (0,1) {};
\node at (0,2) {};
\foreach \x in {0,1,2} {
\foreach \y in {1,2,3,4,5,6,7,8,9,10,11,12,13,14,15,16,17,18,19,20,25,30,35,40,45,50,55,60,65,70,75,80,85,90,95,100} {
\node at (1-1/\y,\x/\y) {};
};
};
\node at (1,0) {};
\draw[thick] (-.3,2.3)--(1.5,0.5);
\fill[color=blue, fill opacity=0.1]
(-.3,2.3)--(-.5,2.1)--(1.3,.3)--(1.5,.5)--(-.3,2.3);
\draw[fill=gray, fill opacity=0.3, color=gray] (0,0)--(0,2)--(1,0)--(0,0);
\end{tikzpicture}
\caption{Slope $-1$ criterion at the corner $v_0$ of $\overline{\conv(\Gen)}$}
\label{fig:slope-1}
\end{figure}
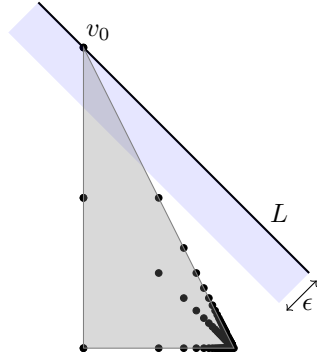
\begin{proof}
The set-up here is a special case of the corner criterion of Lemma \ref{lem:corner_fin_gen}, and the same proof holds to show finite generation of the diagonals.
We shall keep the same notation as in that proof, except that here the slope $(-1)$ condition on $L$ implies that the line equation $L(x,y)=0$ is of the special form $L(x,y)=ax+ay+c$.

Now, if $E_{p,q}[n+\bullet]$ is a summand in the $i$-th diagonal as expressed in \eqref{eq:diagonal}, then its height is
\[
\left|E_{p,q}[n+\bullet]\right| = a(p+q)+(n+\bullet)c = ai+nc + \bullet L(v_0) = ai+nc.
\]
In particular, the entire diagonal is built from summands with constant height $\lambda:= ai+nc = nL(i/n,0) = n\operatorname{dist}(L,(i/n,0))$. Note that the latter distance is equivalently described as the distance between the scaled line $nL$ through $nv_0$ and the diagonal $p+q = i$.

But in the proof of Lemma \ref{lem:corner_fin_gen} we showed that the only elements of height $\lambda$ are products of the GTCA $A_{v_0}$ and the finitely many products of generating subspaces $V_{i_1}\cdot\ldots\cdot V_{i_k}$ with $\sum n_{i_j} \leq \frac{\lambda}{\epsilon}$. It follows that the diagonals in question are each finitely-generated as an $A_{v_0}$-module, and with generators in the said degrees.
\end{proof}

\begin{example}[\textbf{Generation locus of orbit configuration spaces}]
\label{ex:genlocus}
The product formula in Theorem \ref{thm:ss_factorization}\eqref{eq:ss_product_factorization} provides a list of (free) generators for the $E^1$-page of the spectral sequence that computes $\Ho^{BM}_*(\Conf^\bullet_G(X,T))$:
\begin{itemize}
    \item $V_{i,n}:= \Ho^{BM}_i(X)\otimes \wt\Ho_{n-3}(\ol{\Q}_n) \leq E^{1}_{n-1,i}[n]$ for every $i\geq0$ and $n\geq 1$, and,
    \item $V_{[s],n}:= \wt\Ho_{n-2}(\ol{\D^T(G_s,s)}) \leq E^{1}_{n,0}[n]$ for orbit $[s]\in \F/G$ and every $n\geq 1$.
\end{itemize}
Letting $d$ be the degree of the top homology of $X$, the generation locus is thus
\[\Gen = \left\{\left(\frac{n-1}{n},\frac{i}{n}\right) \colon 0\leq i\leq d, n\geq 1\right\}\cup\{(1,0)\}.\]
Figure \ref{fig:genlocus} shows a plot of the generation locus, in the case that the top homology of $X$ is in degree $d=3$. Notice that, as $n$ approaches infinity, the points converge to the corner $(1,0)$.

Since for every fixed $\bullet=n$ there are only finitely many generating subspaces and each is finite-dimensional, the hypotheses necessary for the corner criterion hold. The two marked corners, at $(0,0)$ and $(0,d)$, in Figure \ref{fig:genlocus} are those isolated corners that give rise to finitely-generated module structures on $E^1_{*,*}$ according to Lemma \ref{lem:corner_fin_gen}.

Also notice that when $d=1$, the line of slope $(-1)$ through the corner $(0,1)$ passes through infinitely many points in the generation locus. 
In fact, the corner $(0,d)$ satisfies the conditions of Lemma \ref{lem:slope_1} if and only if $d\neq1$.

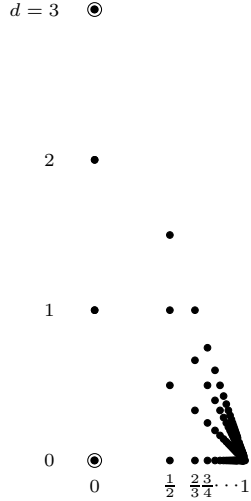
\begin{figure}[ht]
\begin{tikzpicture}[scale=2]
\node[draw,circle,minimum size=3pt,inner sep=0pt] at (0,3) {\scriptsize$\bullet$};
\node[draw,circle,minimum size=3pt,inner sep=0pt] at (0,0) {\scriptsize$\bullet$};
\node at (0,-1/6) {\scriptsize$0$};
\node at (1/2,-1/6) {\scriptsize$\frac{1}{2}$};
\node at (2/3,-1/6) {\scriptsize$\frac{2}{3}$};
\node at (3/4,-1/6) {\scriptsize$\frac{3}{4}$};
\node at (8/9,-1/6) {\scriptsize$\cdots$};
\node at (1,-1/6) {\scriptsize$1$};
\node at (-.3,0) {\scriptsize $0$};
\node at (-.3,1) {\scriptsize $1$};
\node at (-.3,2) {\scriptsize $2$};
\node at (-.4,3) {\scriptsize $d=3$};
\tikzstyle{every node}=[draw,circle,fill,minimum size=2.5pt,inner sep=0pt]
\node at (0,1) {};
\node at (0,2) {};
\foreach \x in {0,1,2,3} {
\foreach \y in {1,2,3,4,5,6,7,8,9,10,11,12,13,14,15,16,17,18,19,20,25,30,35,40,45,50,55,60,65,70,75,80,85,90,95,100} {
\node at (1-1/\y,\x/\y) {};
};
};
\node at (1,0) {};
\end{tikzpicture}
\caption{Generation locus for the $E^1$-page of the spectral sequence computing $\Ho^{BM}_*(\Conf^n_G(X,T))$; see Example \ref{ex:genlocus}.}
\label{fig:genlocus}
\end{figure}

\end{example}

The difference between the corner criterion \ref{lem:corner_fin_gen} and the slope $(-1)$ criterion \ref{lem:slope_1} becomes important when studying the homology of $\Conf^n_G(X,T)$: it may happen that every term $E^\infty_{(p,q)+\bullet v_0}$ by itself is a finitely-generated module, but there are infinitely many of them that contribute to the module $\Ho^{BM}_{i+\bullet v_0}$. This kind of behavior leads to the following definition.

\begin{definition}[\textbf{Filtered representation stability}]
Let $A[\bullet]$ be a GTCA and let $M[\bullet]$ be an $A$-module equipped with a filtration $F_*(M)$. There are two complementary notions of finite generation with respect to the filtration.
\begin{itemize}
    \item $M$ is \emph{bounded} finitely generated if for every bound $p$ on the filtration degree, $F_p(M)$ is a finitely generated $A$-module.
    \item $M$ is \emph{truncated} finitely generated if for every $p$, the quotient $M/F_p(M)$ is a finitely generated $A$-module.
\end{itemize}
\end{definition}
\begin{remark}[\textbf{Shifted filtration}]\label{rmk:shift_filtration}
It will often be the case that the module $M$ in the above definition is a GTCA, and $A$ is a subGTCA. If $A[\bullet]$ is concentrated in filtration degree $\ell\bullet$, one makes the $A$-module structure on $M$ compatible with the filtration by shifting $\wt{F}_p M[\bullet] := F_{p+\ell\bullet} M[\bullet]$. Then multiplication by $A$ preserves the filtration, and one can discuss filtered finite generation.
\end{remark}

\begin{lemma}[\textbf{Slope$\neq-1$ criterion for finite generation of diagonals}]
\label{lem:filtered_fg}
Let $E_{*,*}$ be again as in Lemma \ref{lem:corner_fin_gen} -- a bigraded GTCA generated by subspaces $V_i\leq E_{p_i,q_i}[n_i]$ such that for every $v\in \Gen$ and $n\in \NN$ the corresponding generating subspace \[\bigoplus_{\substack{v(V_i)=v\\ n_i=n}} V_i\] is finite dimensional.

Filter the diagonals by 
\[F_{\ell}\left(\bigoplus_{p+q=i} E_{p,q}\right) = \bigoplus_{\substack{p+q = i \\ p \leq \ell}} E_{p,q}.\]
Then for \emph{every} isolated corner $v_0=(r_0,t_0)$ of the closed convex hull $\ol{\conv(\Gen)}$, the $A_{v_0}$-module structures on the diagonals 
\[
    \bigoplus_{p+q = i+\bullet(r_0+t_0)}E_{p,q}[n+\bullet]
\]
are filtered finitely generated with respect to the shifted filtration $\wt{F}_{\ell} = F_{\ell+\bullet r_0}$.

More specifically, let $L$ be a non-vertical line that meets $\Gen$ only at $v_0$.
\begin{itemize}
    \item Suppose $\Gen$ lies below $L$. If $L$ has slope $m > -1$, then the diagonals are bounded finitely generated. Otherwise, if $m < -1$, then the diagonals are truncated finitely generated.
    \item If $\Gen$ lies above $L$, then the previous two cases are reversed.
\end{itemize}
Quantitatively, one could give explicit bounds on the values $\bullet$ at which generators might appear in terms the bounds in the corner criterion and the filtration degree $\ell$.
\end{lemma}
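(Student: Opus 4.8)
The plan is to follow the template of Lemmas~\ref{lem:corner_fin_gen} and~\ref{lem:slope_1}; the only genuinely new ingredient is a computation of how the height function behaves along a diagonal when the defining line $L$ does not have slope $-1$, where it is no longer constant. I would first fix a normalization: write $L(x,y)=ax+by+c$ with $b\neq 0$ (since $L$ is non-vertical), normalized so that $L$ is non-negative on $\Gen$ and vanishes there exactly at the isolated corner $v_0$ of $\overline{\conv(\Gen)}$. As in the quantitative part of Lemma~\ref{lem:corner_fin_gen}, the hypotheses that $v_0$ is an isolated corner, that $\Gen$ lies on one side of $L$, and that $L$ meets $\Gen$ only at $v_0$ together furnish an $\epsilon>0$ with $L(v)\ge\epsilon$ for every $v\in\Gen\setminus\{v_0\}$. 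Then, exactly as there, set $|e|:=ap+bq+nc$ for $e\in E_{p,q}[n]$; this ``height'' is additive under multiplication, vanishes on $A_{v_0}$, and has $|V_i|\ge n_i\epsilon$ whenever $v(V_i)\neq v_0$, so for each bound $\lambda$ only finitely many generating subspaces $V_i$ with $v(V_i)\neq v_0$ have $|V_i|\le\lambda$, each finite-dimensional.

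The key step is to see how the height varies along the diagonal $D=\bigoplus_\bullet D_\bullet$, where $D_\bullet=\bigoplus_{p+q=i+\bullet(r_0+t_0)}E_{p,q}[n+\bullet]$. If $e\in E_{p,q}[n+\bullet]$ lies in $D$, its shifted filtration degree (the one relevant to $\wt F_\ell=F_{\ell+\bullet r_0}$) is $\mu:=p-\bullet r_0$, and substituting $q=i+\bullet(r_0+t_0)-p$ together with $ar_0+bt_0+c=0$ (that is, $L(v_0)=0$) gives
\[
|e|\;=\;(a-b)\,\mu\;+\;(bi+nc),
\]
so on $D$ the height is an affine function of $\mu$ alone with slope $a-b$, and $a-b\neq 0$ precisely because $L$ does not have slope $-1$ — this is where the hypothesis enters. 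Next I would do the sign bookkeeping: ``$\Gen$ below $L$'' forces $b<0$, and then the slope $-a/b$ of $L$ exceeds $-1$ iff $a-b>0$; ``$\Gen$ above $L$'' forces $b>0$, and then the slope is less than $-1$ iff $a-b>0$. Hence the two ``bounded'' cases of the statement are exactly $a-b>0$ and the two ``truncated'' cases are exactly $a-b<0$.

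To finish, suppose $a-b>0$. Then an element of $D$ lies in $\wt F_\ell D$, i.e.\ has $\mu\le\ell$, if and only if its height is at most $\lambda:=(a-b)\ell+(bi+nc)$, so $\wt F_\ell D$ is spanned by monomials of height $\le\lambda$ lying in $D$; rerunning the argument of the proofs of Lemmas~\ref{lem:corner_fin_gen} and~\ref{lem:slope_1} — split off the $v_0$-generator factors, which lie in $A_{v_0}$ and contribute $0$ to the height, and note that additivity forces any remaining product of non-$v_0$ generators to use at most $\lambda/\epsilon$ of the finitely many available ones — shows $\wt F_\ell D$ is a finitely generated $A_{v_0}$-module, with generators in degrees $\bullet$ satisfying $\sum n_{i_j}\le\lambda/\epsilon$ (one recovers a bound of the same shape as in Lemma~\ref{lem:slope_1} by rewriting $\lambda$ as the distance from the scaled line $nL$ to the point $(\ell,i-\ell)$ on the relevant anti-diagonal). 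If instead $a-b<0$, the same identity shows that $\mu>\ell$ is equivalent to $|e|\le(a-b)(\ell+1)+(bi+nc)=:\lambda$, so it is the quotient $D/\wt F_\ell D$ that is spanned by bounded-height monomials, and the identical corner argument shows it is finitely generated over $A_{v_0}$; thus $D$ is truncated finitely generated. Treating all four combinations of side and slope this way completes the proof. I expect the main obstacle to be precisely the interplay in the middle paragraph — getting the affine dependence $|e|=(a-b)\mu+\mathrm{const}$ right and correctly aligning the sign of $a-b$ with the side on which $\Gen$ sits and with the ``$>-1$ versus $<-1$'' dichotomy — together with checking that the finiteness mechanism of the corner criterion genuinely survives being restricted to the bounded-height part (resp.\ quotient) of $D$ rather than to a single $v_0$-shifted family.
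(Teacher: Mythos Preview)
Your proposal is correct and follows essentially the same approach as the paper: both introduce the height $|e|$ coming from the line $L$, compute that along the diagonal it depends affinely on the shifted filtration degree $\mu=p-\bullet r_0$ with slope $(a-b)$ (the paper normalizes $b=-1$ so this becomes $m+1$), and then use the sign of this slope together with $|e|\ge 0$ and the filtration bound to trap $\mu$ in a finite range, reducing to the corner criterion. The only cosmetic difference is that the paper first decomposes the diagonal into the $A_{v_0}$-submodules $E_{(p_0,q_0)+\bullet v_0}$ and counts how many contribute, whereas you bound the height on $\wt F_\ell D$ (respectively $D/\wt F_\ell D$) directly and rerun the generator-counting argument; these are equivalent packagings of the same estimate.
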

\begin{proof}
Let $L(x,y) = mx-y+c = 0$ be an equation of a line that intersects the convex hull of $\Gen$ only at the corner $v_0=(r_0,t_0)$. 
If $\Gen$ lies below $L$, then $L(v)\geq 0$ for all $v\in\Gen$ and $L(v_0)=0$. Otherwise, if $\Gen$ lies above $L$, then $L(v)\leq 0$ for all $v\in\Gen$ and $L(v_0)=0$.

Let us assume that $\Gen$ lies below $L$; the other case is completely analogous with all inequalities swapped.
As in the proof of Lemma \ref{lem:corner_fin_gen}, define the height of $e\in E_{p,q}[n]$ to be
\[
|e| = n\cdot L(v(e)) = mp-q+nc.
\]
Again, since the height is additive in all inputs $(p,q,n)$, it is additive under multiplication. And since it is nonnegative on a generating set for $E$, it follows that $|e|\geq 0$ for every element.

Now, consider the diagonal
\[
    \bigoplus_{p+q = i+\bullet(r_0+t_0)}E_{p,q}[n+\bullet].
\]
For an element $e\in E_{p,q}[n+\bullet]$ in this sum, 
\begin{align*}
0 & \leq mp-q+(n+\bullet)c & \text{since } |e|\geq 0\\
& = -(p+q) + (n+\bullet)c + (m+1)p\\
& = -(i+\bullet(r_0+t_0)) + (n+\bullet)c + (m+1)p & \text{since } e \text{ is on this diagonal}\\
& = -i + \bullet(-r_0-t_0+c) + nc + (m+1)p\\
& = -i + \bullet(-r_0-mr_0) + nc+ (m+1)p & \text{since } L(v_0)=0\\
& = -i + nc + (m+1)(p-\bullet r_0)
\end{align*}
which gives
\begin{equation}\label{eq:p_bound}
(m+1)(p-\bullet r_0) \geq i-nc.
\end{equation}

The diagonal is a direct sum of $A_{v_0}$-modules $E_{p_0+\bullet r_0,q_0+\bullet t_0}[n+\bullet]$. According to the inequality \eqref{eq:p_bound}, such a module contributes to the diagonal nontrivially only when
\[(m+1)p_0\geq i-nc.\]
When $m>-1$, this is equivalent to having a lower bound $p_0\geq \frac{i-nc}{m+1}$.
By additionally bounding the filtration degree by $\ell+\bullet r_0$, one also imposes a restriction $\ell\geq p_0$.
These bounds on $p_0$ leave only finitely many $A_{v_0}$-modules contributing to the $i$-th diagonal, each of which is finitely-generated by the corner criterion in Lemma \ref{lem:corner_fin_gen}. Thus, the diagonals are bounded finitely generated with respect to the shifted filtration $\wt{F}_\ell = F_{\ell+\bullet r_0}$.

On the other hand, if $m<-1$, inequality \eqref{eq:p_bound} gives the restriction $p_0\leq\frac{i-nc}{m+1}$. Then truncating the filtration would give an additional lower bound on $p_0$, thus again ensuring that only finitely many of these finitely generated $A_{v_0}$-modules contribute to the $i$-th diagonal. The diagonals are therefore truncated finitely generated.
\end{proof}

\subsection{Primary representation stability}
\label{sec:primarystab}
The $E^1$-page of the collision spectral sequence gives access to only limited information regarding $\Ho^{BM}_*(\Conf^n_G(X,T))$. From this point on we handle nonvanishing differentials by employing the powerful Noetherianity results for GTCAs. The primary, and most important example of this approach is given in Theorem \ref{thm:intro-primary_stability} and has already appeared in various special cases \cite{casto,petersen}.

\begin{theorem}[\textbf{Primary finite generation of homology}] \label{thm:primary-finite-generation}
Assume that $X$ is an almost free $G$-space following Conventions \ref{conventions} with $\dim \Ho_*^{BM}(X)<\infty$, and let $T\subset X$ be a finite $G$-invariant subset. 
Let $\Ho^{BM}_{d}(X)\neq 0$ be the top nonvanishing Borel-Moore homology group, and let \[A:= \Ind_{(1)}^{\FB} \Ho^{BM}_d(X)\] be the GTCA freely generated by $\Ho^{BM}_{d}(X)$.
The cross product \[\Ho^{BM}_{d}(X)\otimes \Ho^{BM}_{i}(\Conf^k_G(X,T)) \to \Ho^{BM}_{d+i}(\Conf^{k+1}_G(X,T))\] gives an action of $A$ on the $\FB$-modules of constant codimension $\Ho^{BM}_{d\bullet-i}(\Conf^\bullet_G(X,T))$ for every $i \geq 0$, preserving the collision filtration.

When $d\geq 2$, these $A$-modules are finitely generated. Explicitly, for every $i \geq 0$ there exist finitely many classes 
\[
\alpha_1,\ldots,\alpha_k \in \coprod_{n \in \NN} \Ho^{BM}_{dn-i}(\Conf^n_G(X,T))
\]
whose images under repeated multiplication by $\Ho^{BM}_{d}(X)$ generate  $\Ho^{BM}_{dm-i}(\Conf^m_G(X,T))$ as a $\G[m]$-representation for every $m\in \NN$.

Otherwise, when $d= 1$ these $A$-modules are \emph{bounded-}finitely generated relative to the collision filtration, i.e. every term 
\[
F_{p}\Ho^{BM}_{\bullet - i}(\Conf^\bullet_G(X,T))
\]
is finitely-generated.
\end{theorem}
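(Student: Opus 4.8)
The plan is to deduce the theorem from the spectral sequence factorization of Theorem \ref{thm:ss_factorization} together with the geometric finite-generation criteria of \S\ref{subsec:generation_locus}, and then pass from the $E^1$-page to the homology itself via Noetherianity of the relevant GTCAs. First I would identify, as in Example \ref{ex:genlocus}, the generation locus $\Gen$ of the $E^1$-page of the collision spectral sequence: the free generators are $V_{i,n}\subseteq E^1_{n-1,i}[n]$ coming from $\Ho^{BM}_i(X)\otimes\wt\Ho_{n-3}(\ol\Q_n)$ together with the zero-block generators $V_{[s],n}\subseteq E^1_{n,0}[n]$, so that $\Gen$ consists of the points $(\frac{n-1}{n},\frac{i}{n})$ for $0\le i\le d$, $n\ge 1$, together with $(1,0)$. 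The primary stabilization operation by $\Ho^{BM}_d(X)$ corresponds in these coordinates to multiplication by generators at the fractional bidegree $v_0=(0,d)$, which is precisely the top-left corner of $\ol{\conv(\Gen)}$; the subGTCA $A_{v_0}$ is the free GTCA on $\Ho^{BM}_d(X)$, i.e.\ the algebra $A=\Ind^{\FB}_{(1)}\Ho^{BM}_d(X)$ of the statement, and the constant-codimension strand $\Ho^{BM}_{d\bullet-i}(\Conf^\bullet_G(X,T))$ is exactly a diagonal $p+q=i+\bullet d$ of the spectral sequence.

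The dichotomy $d\ge 2$ versus $d=1$ is then exactly the dichotomy between the slope $-1$ criterion and the slope$\,\neq -1$ criterion. When $d\ge 2$, the line of slope $-1$ through $v_0=(0,d)$ meets $\ol{\conv(\Gen)}$ only at $v_0$ and stays at positive distance $\epsilon$ from every other point of $\Gen$ (as noted in Example \ref{ex:genlocus}, this fails precisely when $d=1$), so Lemma \ref{lem:slope_1} applies and each diagonal $\bigoplus_{p+q=i+\bullet d}E^1_{p,q}[n+\bullet]$ is a finitely generated $A_{v_0}$-module, with the quantitative degree bound from that lemma. When $d=1$ the same corner $v_0=(0,1)$ is still an isolated corner of $\ol{\conv(\Gen)}$, but now one must use a line $L$ through $v_0$ of slope $m\neq -1$ meeting $\Gen$ only at $v_0$, with $\Gen$ lying below $L$ and $m>-1$; Lemma \ref{lem:filtered_fg} then gives that the diagonals are \emph{bounded} finitely generated with respect to the (appropriately shifted, cf.\ Remark \ref{rmk:shift_filtration}) collision filtration, which is the filtration on $\Ho^{BM}_*$ by construction (Theorem \ref{thm:ss_factorization} is a spectral sequence of filtered GTCAs and the collision filtration on the $E^1$-page is the $p$-grading).

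The final step is to transport finite generation from $E^1$ — equivalently $E^\infty$, since finite generation passes to subquotients — to the abutment $\Ho^{BM}_{d\bullet-i}(\Conf^\bullet_G(X,T))$. Here I would invoke Noetherianity of the free GTCA $A$ (which is $\FI$-module-type Noetherianity over the base group $G$, available since $G$ is finite and $\K$ is Noetherian; this is where the hypotheses of \S\ref{conventions} enter): an extension of finitely generated $A$-modules is finitely generated, and the associated graded of $\Ho^{BM}$ with respect to the collision filtration is a subquotient of $E^1$, so finitely generated over $A_{v_0}=A$; since for fixed $i$ only finitely many filtration steps are nonzero in each homological degree along the strand (bounded above by $d$ and by the dimension of $\Ho^{BM}_*(X)$), one assembles finite generation of $\Ho^{BM}_{d\bullet-i}$ itself in the $d\ge 2$ case, and of each $F_p\Ho^{BM}_{\bullet-i}$ in the $d=1$ case. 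I expect the main obstacle to be bookkeeping rather than conceptual: making precise that the spectral sequence differentials, though uncontrolled, cannot destroy finite generation — one needs that the $E^r$-pages remain finitely generated $A$-modules (kernels and cokernels of $A$-module maps between finitely generated modules over a Noetherian $A$), and that the filtration on $\Ho^{BM}$ whose associated graded is $E^\infty$ is exactly the collision filtration, which is the content already extracted in the proof of Theorem \ref{thm:ss_factorization}. A secondary subtlety, in the $d=1$ case, is checking that the slope of an admissible line $L$ can indeed be taken in $(-1,\infty)$ — this is visible from the plot in Figure \ref{fig:genlocus} since for $d=1$ all points $(\frac{n-1}{n},\frac1n)$ with $n\ge 2$ lie strictly below any line through $(0,1)$ of slope slightly greater than $-1$, and $(1,0)$ does too.
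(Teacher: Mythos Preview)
Your proposal is correct and follows essentially the same route as the paper: identify $A$ with the corner $(0,d)$ of the generation locus, apply Lemma~\ref{lem:slope_1} when $d\ge 2$ and Lemma~\ref{lem:filtered_fg} when $d=1$ to get (bounded) finite generation on $E^1$, then propagate through the spectral sequence using Noetherianity of $A$-modules (via Sam--Snowden's $\FI[D]$ result, bootstrapped to general $G$ by restriction along $1\le G$).

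One point you gloss over that the paper makes explicit: for the induction on pages to work you need the differentials $d_r$ to be $A$-module maps, not merely $\K$-linear. The reason this holds is that $A\subseteq E^1_{0,d\bullet}$ sits entirely in the $p=0$ column, so every differential (which decreases $p$) vanishes on $A$; hence $A$ survives unchanged to every page and the multiplicative structure on the spectral sequence makes each $d_r$ automatically $A$-linear. You flag this as ``bookkeeping,'' and it is, but it is the one place where something could in principle go wrong, so it is worth stating the one-line argument. Your aside about ``only finitely many filtration steps being nonzero'' is unnecessary: Lemma~\ref{lem:slope_1} already gives finite generation of the entire diagonal at once, and that passes directly to $E^\infty$ and hence to the associated graded of the abutment.
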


\begin{proof}
From the compatibility of the collision filtration with GTCA multiplication, it is clear that the cross product with classes in $X$ respects the filtration. Passing to the associated spectral sequence, one identifies the sub-GTCA $A = \Ind^{\FB}_{(1)}\Ho_d^{BM}(X) \leq E^1_{*,*}$ in the product decomposition \ref{thm:ss_factorization}\eqref{eq:ss_product_factorization} as the one generated by this product operation.
Since all differentials point to the left, they must all vanish on $A\subseteq E^1_{0,d\bullet}$.
Thus the entire spectral sequence is in fact one of $A$-modules, and the $A$-action on homology is compatible with this structure.

We show by induction on $r$ that every diagonal
\[
\bigoplus_{p+q = d\bullet-i} E^r_{p,q}[\bullet]
\]
is a (bounded-) finitely generated $A$-module.
For the base case $r=1$, note that $A$ contributes the point $(0,d)\in \Gen(E^1)$ (see Figure \ref{fig:genlocus}) which is always an isolated corner separated from other points by a horizontal line -- this gives bounded-finite generation by Lemma \ref{lem:filtered_fg}. Furthermore, if $d\geq 2$ then the point $(0,d)$ is separated from the other points by a line of slope $(-1)$, thus by the slope $(-1)$ criterion in Lemma \ref{lem:slope_1} absolute finite generation follows.

For the induction step, since the differentials are $A$-linear, their kernel and cokernel are $A$-submodules. Therefore, for the induction to proceed it would suffice to know that the category of $A$-modules is locally Noetherian, i.e. that submodules of a finitely generated module are also finitely generated: then finite-generation would persist under computing the subquotients $E^{r+1} = \Ho(E^r,\partial_r)$. This Noetherianity property indeed holds as a result of the work of Sam-Snowden, explained next.

Consider the case $G=1$ first.
For any ring $R$ and $D\in \NN$, modules over $\Ind_{(1)}^{\FB[]} (R^D)$ are equivalent to representations of the category $\FI[D]$ (see \cite[Prop. 7.2.5.]{SS-Grobner}). Then \cite[Cor. 7.1.5]{SS-Grobner} is the claim that representations of $\FI[D]$ over a Noetherian ring are again Noetherian. Lastly, if $U$ is any finitely generated $R$-module, it receives a surjection $R^D\onto U$ and thus there is an induced map of TCAs $\Ind_{(1)}^{\FB[]} (R^D) \onto \Ind_{(1)}^{\FB[]} (U)$. Since the former TCA has a locally Noetherian category of representations, it follows by restriction that the latter TCA has the same property.

With a general finite group $G$, one can bootstrap from the TCA case in the previous paragraph. This proceeds by observing that the restriction of GTCAs to TCAs along the inclusion $1\leq G$ reflects finite generation of modules. Indeed, upon restriction one only has to consider all $G^n$-translates of generators, of which there would be finitely many.

Returning to our original problem of configuration spaces, Noetherianity of $A$-modules allows the induction to proceed. Since every $A$-module occurring in $E^1$ is (locally) finitely-generated, every later page $E^r$ would also be comprised of finitely-generated $A$-modules, and thus so will be $E^{\infty}$. Since the diagonal $\oplus_{p+q=d\bullet-i}E^{\infty}_{p,q}[\bullet]$ is the associated graded of $\Ho^{BM}_{d\bullet-i}$, they are all (bounded-)finitely generated as claimed.
\end{proof}

\begin{remark}[\textbf{Poincar\'{e} dual statement for manifolds}]\label{rem:duality}
When $X$ is an orientable $d$-manifold, its orbit configuration space $\Conf^n_G(X,T)$ is an orientable $dn$-manifold. Poincar\'{e} duality gives an identification
\[
\Ho^{BM}_{d\bullet-i}(\Conf^\bullet_G(X,T)) \cong \Ho^{i}(\Conf^\bullet_G(X,T))
\]
and the cross product with the fundamental class $[X]$ is conjugate to the ordinary pullback along the projection $\Conf_G^{n+1} \to \Conf_G^n$ forgetting a point.
Thus our Theorem \ref{thm:primary-finite-generation} recovers and extends classical representation stability for connected oriented manifolds.
\end{remark}

\begin{remark}[\textbf{Degrees of generators and relations}]
For finite generation to give explicit applications, one must get a handle on the degrees at which generators and relations appear. 
In Theorem \ref{thm:E1_finite_generation} below, we examine the $E^1$ page more throughly and obtain explicit bounds on degrees at which generators may appear.
We show that the
$A$-modules that make up the $E^1$-page are all free, generated in known degrees and satisfy no relations. Explicitly, the diagonal
\[\bigoplus_{p+q = d\bullet-i}E^1_{p,q}[\bullet]\]
is generated by elements in $E^1[n]$ with $n\leq \frac{i}{\epsilon}$ for $\epsilon=\min(\frac{d-1}{2},k)$ where $k\geq 1$ is least such that $\Ho^{BM}_{d-k}\neq 0$.

From this point, one would need to bound the effect differentials might have. This is standard practice in representation stability, and proceeds using the notions of injectivity and surjectivity degree (see e.g. \cite[\S3.1]{CEF} or \cite[\S4.2]{Wi-FIW}). We will not address this more quantitative aspect of stability in this work.
\end{remark}

Let us now relate the finite generation results of Theorem \ref{thm:primary-finite-generation} with representation theory, and constrain the irreducible decompositions of the sequence of representations $\G[n]\curvearrowright \Ho^{BM}_{dn-i}(\Conf^n_G(X,T))$. When the top homology $\Ho^{BM}_d(X)$ is an irreducible representation of $G$ (e.g. the trivial representation), and under the mild hypotheses of Theorem \ref{thm:primary-finite-generation}, there is a nice characterization for these irreducible decompositions. Similar descriptions are possible when $\Ho^{BM}_d(X)$ is reducible, but these get messy and hard to write down explicitly.

We will first recall some representation theory of the wreath product group $\G[n]$ (see \cite{kerber} for an exposition).
Given an $\sym$-representation $V$ and a $G$-representation $U$, denote $V[U]:=V\otimes U^{\otimes n}$, a representation of $\G$.

Suppose that $U_1,\dots,U_\ell$ is a complete list of irreducible representations of $G$. 
The irreducible representations of $\G[n]$ are characterized by the induction products $S^{\lambda_1}[U_1]\cdot S^{\lambda_2}[U_2]\cdots S^{\lambda_\ell}[U_\ell]$, 
where $\lambda_1,\dots,\lambda_\ell$ are integer partitions with $|\lambda_1|+\cdots+|\lambda_\ell|=n$ 
and $S^{\lambda_i}$ is the irreducible representation of $\sym[|\lambda_i|]$ corresponding to the partition $\lambda_i$.
Let us denote \[V(\lambda_1,\dots,\lambda_\ell):=S^{\lambda_1}[U_1]\cdot S^{\lambda_2}[U_2]\cdots S^{\lambda_\ell}[U_\ell].\]
Given an integer partition $\lambda=(a_1,\dots,a_k)$ and an integer $m\geq |\lambda|+a_1$, define a partition $\lambda\langle m\rangle = (m-|\lambda|,a_1,\dots,a_k)$. Considering the Young diagram of $\lambda$, this operation adds a top row to the diagram, obtaining a partition of $m$.

\begin{theorem}[\textbf{Multiplicity stability}] \label{thm:primary-multiplicity}
Assume that $X$ is an almost free $G$-space following Conventions \ref{conventions} with $\dim \Ho_*^{BM}(X)<\infty$, and let $T\subset X$ be a finite $G$-invariant subset.
Let $H^{BM}_{d}(X)\neq 0$ be the top nonvanishing homology group, and assume that $\Ho^{BM}_d(X)$ is an irreducible $G$-representation; without loss of generality we may assume $\Ho^{BM}_d(X)=U_1$. 

When $d\geq 2$, there exists a \emph{finite} set $\Lambda$ of $\ell$-tuples of partitions  
and positive integers $c(\underline{\lambda})$ for each $\underline{\lambda}=(\lambda_1,\dots,\lambda_\ell)\in\Lambda$ such that for all $n\gg 1$,
\[
\Ho^{BM}_{dn-i}(\Conf^n_G(X,T)) = \bigoplus_{\underline{\lambda}=(\lambda_1,\dots,\lambda_\ell)\in \Lambda} V(\lambda_1\langle n\rangle,\lambda_2,\dots,\lambda_\ell)^{\oplus c(\underline{\lambda})}.
\]
Furthermore, all $\underline{\lambda}=(\lambda_1,\dots,\lambda_\ell)\in\Lambda$ satisfy the bound $|\lambda_1|+\cdots+|\lambda_\ell|\leq i/\epsilon$ 
with $\epsilon=\min(\frac{d-1}{2},k)$ where $k\geq 1$ is least such that $\Ho^{BM}_{d-k}(X)\neq 0$.

When $d=1$ the same multiplicity stability result holds after bounding the collision filtration to degree $p$, now with partition bound $\sum |\lambda_j| \leq \frac{\sqrt2}{2}(i+2p)$.
\end{theorem}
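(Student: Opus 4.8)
The plan is to derive the statement from the finite generation established in Theorem~\ref{thm:primary-finite-generation}, combined with the representation theory of the GTCA $A := \Ind_{(1)}^{\FB}(U_1)$, which for an irreducible $U_1$ behaves like a wreath-product analogue of the category $\FI$. As the decomposition into irreducibles presupposes, one works over a field of characteristic zero, so all $\G[n]$-representations are semisimple. By Theorem~\ref{thm:primary-finite-generation}, when $d\geq 2$ the sequence $M[\bullet] := \Ho^{BM}_{d\bullet-i}(\Conf^\bullet_G(X,T))$ is a finitely generated $A$-module, and when $d=1$ each truncation $F_pM[\bullet]$ is. It therefore suffices to prove a purely algebraic assertion: a finitely generated $A$-module $M$ satisfies $M[n]\cong\bigoplus_{\underline\lambda\in\Lambda}V(\lambda_1\langle n\rangle,\lambda_2,\dots,\lambda_\ell)^{\oplus c(\underline\lambda)}$ for $n\gg 0$ with $\Lambda$ finite, and then to identify the size bound on the $\underline\lambda$ via the degrees of generators.

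The key computation is the $\G[n]$-decomposition of a free $A$-module generated by a $\G[m]$-representation $W$, whose degree-$n$ part is $\Ind_{\G[n-m]\times\G[m]}^{\G[n]}\bigl(U_1^{\otimes(n-m)}\otimes W\bigr)$. Writing $W$ as a sum of irreducibles $V(\mu_1,\dots,\mu_\ell)$ and using $U_1^{\otimes(n-m)}=S^{(n-m)}[U_1]=V((n-m),\emptyset,\dots,\emptyset)$, the wreath-product induction-product formula (see \cite{kerber}) reduces the computation to symmetric-group induction products on each colour strand; on the $U_1$-strand one must expand $S^{(n-m)}\cdot S^{\mu_1}$, which the Pieri rule writes as a multiplicity-free sum $\bigoplus_\kappa S^\kappa$ over $\kappa$ obtained from $\mu_1$ by adding a horizontal strip of size $n-m$. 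For $n-m$ large these $\kappa$ are exactly the partitions $\nu\langle\,\cdot\,\rangle$ with $\nu$ interlacing $\mu_1$ and the first row padded so the total degree is $n$. Hence for $n\gg 0$ the decomposition of the free module stabilizes into the prescribed form, in which only the Young diagram indexed by $U_1$ grows, and every contributing $\underline\lambda$ obeys $|\lambda_1|+\dots+|\lambda_\ell|\leq m$.

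Next I bootstrap to arbitrary finitely generated $A$-modules using that the category of $A$-modules is locally Noetherian --- the input already invoked in the proof of Theorem~\ref{thm:primary-finite-generation} via \cite{SS-Grobner,SS-FIG}. Choosing a presentation $0\to K\to F\to M\to 0$ with $F$ a finite free $A$-module and $K$ finitely generated (hence itself a quotient of a finite free module), one gets $[M[n]]=[F[n]]-[K[n]]$ in the representation ring of $\G[n]$; by the previous paragraph both classes are, for $n\gg 0$, integer combinations of the $V(\lambda_1\langle n\rangle,\lambda_2,\dots,\lambda_\ell)$ with eventually constant coefficients, and therefore so is $[M[n]]$. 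Since $M[n]$ is an honest representation, those coefficients are non-negative and eventually constant, which produces the finite data $(\Lambda,c)$. For the quantitative bound I do not argue with generators of $M$ itself but track the $E^1$-page of the collision spectral sequence: by Theorem~\ref{thm:E1_finite_generation} the diagonal $\bigoplus_{p+q=d\bullet-i}E^1_{p,q}[\bullet]$ is free and generated in degrees $n\leq i/\epsilon$ with $\epsilon=\min(\tfrac{d-1}{2},k)$, so by the free-module computation every irreducible appearing there obeys $\sum_j|\lambda_j|\leq i/\epsilon$; since every later page is a subquotient and a submodule or quotient only involves irreducibles already present, the bound survives to $E^\infty=\operatorname{gr}\Ho^{BM}_{dn-i}$ and then, by semisimplicity, to $\Ho^{BM}_{dn-i}$ itself. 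The $d=1$ case is identical with $M[\bullet]$ replaced by $F_pM[\bullet]$: the corner $(0,1)$ of the generation locus lies on a slope-$(-1)$ line through infinitely many other points, so only the truncation is finitely generated (Lemma~\ref{lem:filtered_fg}), and the bound $\sum_j|\lambda_j|\leq\tfrac{\sqrt2}{2}(i+2p)$ is the effective generation-degree estimate of the slope-$(-1)$ corner criterion of Lemma~\ref{lem:slope_1} applied with the filtration truncated at $p$, the factor $\tfrac{1}{\sqrt2}$ being the spacing between consecutive antidiagonals relative to that line.

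The main obstacle is the content of the second and third paragraphs: showing that the Church--Ellenberg--Farb stability theorem for $\FI$-modules \cite{CEF} extends to modules over $A=\Ind_{(1)}^{\FB}(U_1)$ for an arbitrary irreducible $U_1$, with the stabilization concentrated entirely in the diagram indexed by the colour $U_1$. This is precisely where irreducibility of $\Ho^{BM}_d(X)$ is used --- for a reducible top homology several diagrams would grow simultaneously and, while a stability statement still holds, the branching bookkeeping becomes too unwieldy to record cleanly, which is why the theorem restricts to the irreducible case.
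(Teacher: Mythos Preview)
Your overall strategy is the same as the paper's: deduce multiplicity stability from finite generation (Theorem~\ref{thm:primary-finite-generation}), compute the induction product $V_j\cdot S^{(k)}[U_1]$ via Pieri on the $U_1$-strand, and read off the bounds on $\sum|\lambda_j|$ from the slope-$(-1)$ criterion applied to $E^1$.

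There is, however, a genuine gap in your third paragraph. You write $[M[n]]=[F[n]]-[K[n]]$ and assert that ``by the previous paragraph both classes are \ldots\ with eventually constant coefficients.'' The previous paragraph establishes this only for \emph{free} $A$-modules; $K$ is merely finitely generated, and being a quotient of a free module is exactly the situation of $M$ itself, so the argument is circular as written. Boundedness of the multiplicities of $K[n]$ (from $K\subseteq F$) is not enough to force eventual constancy without further structural input.

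The paper closes this gap differently: rather than bootstrapping from free modules via a presentation, it invokes \cite[Proposition~3.1.3]{SS-FIG}, which identifies $A$-modules (for $A=\Ind_{(1)}^{\FB}U_1$ with $U_1$ irreducible) with representations of $\FI[]\times\FB[]^{\ell-1}$. This equivalence applies to the module $M$ directly, not just to its free cover, and reduces multiplicity stability to the classical $\FI[]$-module statement of \cite{CEF}. You should either cite this equivalence as well, or else supply a finite resolution of $M$ by projectives (available in characteristic zero, but this already presupposes the $\FI$-type structure theory you are trying to avoid). Once the categorical equivalence is in hand, your Pieri computation and the $E^1$ bound argument go through exactly as you wrote them.
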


\begin{proof}
From Theorem \ref{thm:primary-finite-generation} the module $\Ho^{BM}_{d\bullet-i}(\Conf^\bullet_G(X,T))$ is finitely generated under repeated multiplication by $\Ho^{BM}_d(X)\cong U_1$. Let $\{V_j\}_{j=1}^g$ enumerate the collection of generating irreducible representations, say $V_j$ occurs in configurations of $n_j$-points. Explicitly, for every $n\geq 1$ the $\G$-representation $\Ho^{BM}_{dn-i}(\Conf^n_G(X,T))$ is a quotient of the sum of the induction products
\begin{equation}\label{eq:induction_product}
    V_j \boxtimes (\underbrace{U_1\boxtimes\ldots \boxtimes U_1}_{k \text{ times}}) = V_j\cdot S^{(k)}[U_1]
\end{equation}
where $S^{(k)}$ is the trivial representation of $\sym[k]$ and $k= n-n_j$.

By the classification of irreducibles $V_j= S^{\mu_1}[U_1]\cdot S^{\mu_2}[U_2]\cdots S^{\mu_\ell}[U_\ell]$ for some partitions satisfying $|\mu_1|+\ldots+|\mu_{\ell}|=n_j$. Thus the induction product in \eqref{eq:induction_product} is
\[
\left(S^{\mu_1}[U_1]\cdot S^{\mu_2}[U_2]\cdots S^{\mu_\ell}[U_\ell]\right)\cdot S^{(k)}[U_1] 
\cong (S^{\mu_1}\cdot S^{(k)})[U_1]\cdot S^{\mu_2}[U_2]\cdots S^{\mu_\ell}[U_\ell].
\]
The branching rules for $S^{\mu_1}\cdot S^{(k)}$ work just as for symmetric groups, and hence the decomposition of such a product stabilizes as in ordinary representation stability for $\FI[]$-modules. More explicitly, \cite[Proposition 3.1.3]{SS-FIG} explains that when $U_1$ is the trivial representation, a module as we have above is equivalent to a representation of the category $\FI[]\times\FB[]^{\ell-1}$, with every factor acting on a corresponding term in the induction product. In particular, the multiplicities associated with $U_1$ follow the usual pattern of multiplicities in $\FI[]$ modules. But a quick inspection of their proof shows that it carries over to having $U_1$ be any irreducible of $G$.

Lastly, when $d\geq 2$ Lemma \ref{lem:slope_1} gives a bound on the generator degree of the $E^1$-page of the collision spectral sequence: $n_j \leq i/\epsilon$, and when $d=1$ one gets the filtration dependent bound from the proof of Lemma \ref{lem:filtered_fg}. These bound the partition lengths appearing on $E^1$, and therefore also the ones in homology.
\end{proof}

Our setup gives rise to symmetric group representations in two different ways: either by restricting the action $\G \curvearrowright \Conf_n^G(X,T)$ to $\sym\leq \G$, or by forgetting the $G$ action on $X$ altogether and considering the ordinary configuration space $\Conf_n(X,T)$ with its symmetric group action.

In both cases, the algebraic structure that arises in Theorem \ref{thm:primary-finite-generation} is what's known in the literature as an $\FI[D]$-module, where $D$ is the dimension of the top nonvanishing Borel-Moore homology $\Ho^{BM}_d(X)$.
These are representations of a certain category $\FI[D]$ described explicitly in \cite{SS-Grobner}, whose finitely generated representations have been studied extensively by Ramos \cite{Ramos}. Their work implies stability patterns in the representations $\sym\curvearrowright \Ho_{dn-i}^{BM}(\Conf_n^G(X,T))$, mainly injectivity and surjectivity properties of the maps $\Ho_{dn-i}^{BM} \to \Ho_{d(n+1)-i}^{BM}$ and constraints on their irreducible decompositions (see \cite[Theorem A]{Ramos} for details).
In particular, we obtain the following statement using \cite[Theorems B and 2.15]{Ramos}.
\begin{corollary}[\textbf{Restricting to the symmetric group action}]
Assume that $X$ is an almost free $G$-space following Conventions \ref{conventions} with $\dim \Ho_*^{BM}(X)<\infty$, and let $T\subset X$ be a finite $G$-invariant subset.
Let $\Ho_d^{BM}(X)\neq0$ be the top nonvanishing homology, and set $D = \dim\Ho_d^{BM}(X)$. For $d\geq 2$ and any $i\geq 0$, consider $\Ho_{dn-i}^{BM}(\Conf_n^G(X,T))$ as a representation of the symmetric group $\sym$. 
\begin{enumerate}
\item
For any partition $\lambda$, there exists a polynomial $P_\lambda(t)$ with $\deg P_\lambda<D$ such that the multiplicity of the irreducible $\sym$-representation $V(\lambda\langle n\rangle)$ in $\Ho_{dn-i}^{BM}(\Conf_n^G(X,T))$ is equal to $P_\lambda(n)$ for all $n\gg1$.
\item
There exist polynomials $p_1(t),\dots,p_D(t)$ such that for all $n\gg1$,
\[
\dim \Ho^{BM}_{dn-i}(\Conf^G_n(X,T)) = p_1(n)+ p_2(n)2^n + \ldots + p_D(n) D^n.
\]
\end{enumerate}
When $d=1$, the same holds after bounding the collision filtration to degree $p$.
\end{corollary}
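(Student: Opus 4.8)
The plan is to recognize the restricted sequence as a finitely generated $\FI[D]$-module and then to quote Ramos's structure theory for such modules, with essentially all of the topological content already carried out in Theorem~\ref{thm:primary-finite-generation}. First I would recall from that theorem that, for $d\geq 2$, the $\FB$-module of constant codimension $\Ho^{BM}_{d\bullet-i}(\Conf^\bullet_G(X,T))$ is finitely generated over the free GTCA $A=\Ind^{\FB}_{(1)}\Ho^{BM}_d(X)$ (and when $d=1$, that the term $F_p$ of it is finitely generated for each fixed bound $p$ on the collision filtration). Restricting the wreath-product actions to the symmetric subgroups $\sym[n]\leq\G[n]$ is exactly restriction of this module along the group-change inclusion $\FB[]\hookrightarrow\FB$ of \eqref{eq:groupchangefunctor} associated to $1\leq G$. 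Since the underlying $\K$-module of $\Ho^{BM}_d(X)$ is $\K^D$, the restricted GTCA is $\Ind_{(1)}^{\FB[]}(\K^D)$, and, exactly as in the proof of Theorem~\ref{thm:primary-finite-generation}, finite generation is preserved under this restriction: the $G^n$-translates of a finite generating set over $A$ form a finite generating set over $\Ind_{(1)}^{\FB[]}(\K^D)$.

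Next I would invoke \cite[Prop.~7.2.5]{SS-Grobner}, by which modules over $\Ind_{(1)}^{\FB[]}(\K^D)$ are precisely representations of the category $\FI[D]$. Consequently $\Ho^{BM}_{dn-i}(\Conf^n_G(X,T))$, viewed as a sequence of $\sym[n]$-representations, is a finitely generated $\FI[D]$-module (taking $\K$ to be a field, as is needed for the dimension count). Part (1) then follows from Ramos's determination of the eventual $\sym[n]$-isotypic multiplicities of such a module: the multiplicity of $V(\lambda\langle n\rangle)$ agrees for $n\gg1$ with a polynomial in $n$ of degree $<D$ \cite[Theorem~2.15]{Ramos}; part (2) follows from his Hilbert-series formula $\dim M_n=\sum_{j=1}^{D}p_j(n)j^n$ for $n\gg1$, valid for any finitely generated $\FI[D]$-module $M$ \cite[Theorem~B]{Ramos}. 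The case $d=1$ needs no new idea: one runs the same chain of implications with $F_p\Ho^{BM}_{\bullet-i}(\Conf^\bullet_G(X,T))$ in place of the full module, since that is the object Theorem~\ref{thm:primary-finite-generation} certifies to be finitely generated, and Ramos's theorems then describe $F_p\Ho^{BM}_{n-i}(\Conf^n_G(X,T))$.

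I expect the only genuinely substantive step to be the verification in the first paragraph: that restriction along $1\leq G$ carries the free GTCA on $\Ho^{BM}_d(X)$ to the free TCA on $\K^D$ and preserves finite generation. Once that is in hand there is nothing left to prove — in particular no spectral-sequence bookkeeping, all of which was absorbed into Theorem~\ref{thm:primary-finite-generation} — and the corollary becomes a direct translation of known structural results about $\FI[D]$-modules.
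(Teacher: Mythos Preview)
Your proposal is correct and follows essentially the same argument as the paper: the paper's proof is just the sentence ``we obtain the following statement using \cite[Theorems~B and 2.15]{Ramos}'' preceded by the observation that restricting to $\sym[n]\leq\G[n]$ yields a finitely generated $\FI[D]$-module via Theorem~\ref{thm:primary-finite-generation}. Your write-up simply spells out in more detail the steps the paper leaves implicit (the restriction along $1\leq G$, the identification of the restricted GTCA with $\Ind_{(1)}^{\FB[]}(\K^D)$, and the preservation of finite generation), and cites the same results from \cite{SS-Grobner} and \cite{Ramos}.
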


\subsection{Secondary and higher representation stability}
In \cite{MW}, Miller and Wilson discovered a phenomenon of secondary representation stability: for a manifold with boundary $M$, their construction gives rise to a stabilization map \[\Ho_i(\Conf^n(M))\to \Ho_{i+1}(\Conf^{n+2}(M))\] by introducing a pair of orbiting points near the boundary of $M$. They then show that after ``factoring out'' a primary stabilization given by introducing one point near the boundary (whose Poincar\'{e} dual map is a right inverse to our primary stabilization -- more details in \S\ref{subsec:orbiting_pair}) their new stabilization map gives isomorphisms with improved stable range. Following their example, this section explores secondary operations on the sequence of representations $\Ho^{BM}_{*}(\Conf_G^{\bullet}(X,T))$ that become important after factoring out the primary stabilization action of the previous section, though we prove new theorems only in the special case of $i$-acyclic spaces, for which the collision spectral sequence already collapses at the $E^1$-page. Examples to which our analysis will apply are the affine and toric root system arrangements, which in large part motivated this project.

When the collision spectral sequence collapses at $E^1$ one essentially gets a formula for the homology as a GTCA (up to an extension problem). In this regard invariants such as Betti numbers and multiplicities of irreducible representations are in principle completely computable, though some qualitative questions are still hard to answer in practice. The main purpose of this section is therefore different: we seek to understand the various stabilization operations and quantify finite generation under them. One potential broader application of this analysis is to the study of ``derived generators", also called $\mathtt{FI}$-hyperhomology in some contexts, see Remark \ref{rmk:FI-homology} for more details.

Consider the many module structures on the $E^1$-page of our spectral sequence that arise from the product factorization \eqref{eq:ss_product_factorization} in Theorem \ref{thm:ss_factorization}. For efficiency of notation, let us break somewhat from the notation of \S\ref{subsec:generation_locus} and write 
\[
A^n_i := \Ind_{G\times \sym}^{\FB} \Ho^{BM}_i(X)\boxtimes \wt\Ho_{n-3}(\ol{\Q}_n)
\]
for the term that appears in the $n$-th factor of \eqref{eq:ss_product_factorization}. Note that this is precisely the subalgebra of $E^1$ generated by $V_{i,n}$ from Example \ref{ex:genlocus}, and $A_d^1$ is the algebra of primary stabilization operations studied in \S\ref{sec:primarystab}.

Now, every pair $(i,n)$ gives an action by multiplication 
\[A^n_i[\bullet]\otimes E^1_{p,q}[m]\to E^1_{p+(n-1)\frac{\bullet}{n},q+i\frac{\bullet}{n}}[m+\bullet]\]
making the $E^1$-page into a direct sum of $A^n_i$-modules labeled by triples $(p,q,m)$. In fact, the product factorization in \eqref{eq:ss_product_factorization} shows that every one of these modules is \emph{free}.

\begin{definition}[\textbf{Factoring out an action}]
\label{def:factor_out_action}
For any GTCA $A$ and any $A$-module $E$, extracting the generators for $E$ amounts to computing the quotient by the ``augmentation ideal"
\[
(E/{A_{>0}E})[\bullet] = E[\bullet]/\left( \sum_{\substack{i+j=\bullet \\ i>0}} A[i]E[j] \right).
\]
We refer to this operation as \emph{factoring out the $A$-action}.
\end{definition}

\begin{remark}[\textbf{Factoring out is zeroth $\FI[]$-homology}]\label{rmk:FI-homology}
This operation of `factoring out' and its derived functors are ubiquitous in the representation stability literature (see e.g. \cite{church-ellenberg, ramos-li}). For example, in the case that $G=1$ and $A=\mathds{1}_\bullet$, an $A$-module is nothing but an $\FI[]$-module. Then factoring out the $A$-action is precisely what \cite[Def. 2.3.7.]{CEF} calls $\FI[]$-homology $\Ho^{\FI[]}_0(-)$.

Note that when working with rational coefficients, all modules appearing in our $E^1$-page are projective. It follows that factoring out actions at the level of $E^1$ computes the associated derived functor -- a generalization of $\FI[]$-hyperhomology. In particular, all results below about vanishing ranges and generator degrees for $E^1$ imply the same vanishing ranges for the derived factoring-out functors. For $\FI[]$-modules such vanishing results have been translated back to stable range calculations by Gan-Li \cite{GL-linear-range}, and have been utilized in the case of configuration spaces of closed manifolds by Miller-Wilson \cite{MW2}).

We should also point out recent work by Ho \cite{Ho} which obtains essentially the same bounds presented below using a completely different approach via factorization homology, though he only considers rational coefficients. See \cite{Ho} for a more complete discussion of the derived factoring-out functors and their vanishing ranges in the context of configuration spaces.
\end{remark}

\begin{example}[\textbf{Factoring-out for free GTCAs}]
\label{ex:factor_out_action}
For a free module $E = A\otimes V$, one clearly gets
\[
A\otimes V/(A_{>0}\otimes V) \cong V
\]
since $A$ is unital.
This could be understood as formally deleting the `$A\otimes$' factor from the product factorization of $E$. 

In the context of our GTCAs $A^n_i$ acting on $E^1$, a quotient $E^1/{(A^n_i)_{>0}E^1}$ simply removes the term $\Ho^{BM}_i(X)\otimes \wt\Ho_{n-3}(\ol{\Q}_n)$  
appearing in the product \eqref{eq:ss_product_factorization}. Furthermore, since \eqref{eq:ss_product_factorization} also implies that the various $A^n_i$-actions on $E^1$ commute (possibly up to signs), one can still define an $A^{m}_{j}$-action on any quotient $E^1/(A^n_i)_{>0}E^1$ and these remain projective $A^m_j$-modules. This observation will enter our discussion of secondary and higher stability below.
\end{example}

With the understanding that representation stability is interpreted as finite generation of modules over GTCAs, further actions of GTCAs on the spaces of generators for $E^1$ exhibiting them as finitely generated modules is thus understood as secondary representation stability. The next result manifests this procedure to uncover many layers of finitely-generated module structures on $E^1$.

\begin{theorem}[\textbf{Finite generation of diagonals in $E^1$}]\label{thm:E1_finite_generation}
Assume that $X$ is an almost free $G$-space following Conventions \ref{conventions} with $\dim \Ho_*^{BM}(X)<\infty$, and let $T\subset X$ be a finite $G$-invariant subset.
Let $E^1_{*,*}$ be the GTCA computing $\Ho^{BM}_*(\Conf^n_G(X,T))$, and for each pair $(i,n)$ let $A_i^n$ be the subGTCA freely generated by $\Ho^{BM}_i(X)\otimes\wt\Ho_{n-3}(\Q_n)$. 
The diagonals in $E^1_{*,*}$ admit multiple structures of a (bounded) finitely-generated module over the subGTCAs $A^n_i$, obtained by the following iterative procedure.

Set $E=E^1_{*,*}$ and let $\Gen(E)$ be the generation locus of $E$ (see Example \ref{ex:genlocus} and Figure \ref{fig:genlocus}).
\begin{enumerate}
    \item Find the points $v(A^n_i) = \left(\frac{n-1}{n},\frac{i}{n}\right) \in \Gen(E)$ of maximal taxi-cab norm.
    
    If $v:=v(A^n_i)$ is the unique point in $\Gen(E)$ 
    of maximal norm, then the diagonals

    \begin{equation}\label{eq:diagonal_module}
        \bigoplus_{p+q=\|v\|\bullet-j} E_{p,q}[m+\bullet]
    \end{equation}
    form a finitely-generated free $A^n_i$-module for every pair $(j,m)\in \NN^2$. Moreover, all generators appear in $E[k]$ for $k \leq \frac{m\|v\|+j}{\epsilon}$ where $\epsilon$ is the difference between the two largest taxi-cab norms in $\Gen(E)$.

    Otherwise, if there are multiple points of maximal norm, pick the one with minimal $x$-coordinate, say $v(A^n_i)\in \Gen(E)$. Then the free $A^n_i$-module in \eqref{eq:diagonal_module} is only bounded finitely generated with respect to the appropriately shifted filtration as in Remark \ref{rmk:shift_filtration}).
    
    \item A description of the (free) generators is attained by factoring out the action of $A^n_i$.
    By replacing $E$ with the quotient $E/(A_i^n)_{>0}E$ and returning to Step 1, the space of generators will itself admit further structures of finitely-generated free modules.
\end{enumerate}
\end{theorem}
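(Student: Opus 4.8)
The plan is to deduce every clause from the three geometric criteria of \S\ref{subsec:generation_locus}, applied to the explicit product factorization of the $E^1$-page. I would begin by recording the standing hypotheses. By Theorem \ref{thm:ss_factorization}\eqref{eq:ss_product_factorization} the bigraded GTCA $E^1_{*,*}$ is the tensor product of the free GTCAs $A^m_j$ over all $j\ge 0$, $m\ge 1$, together with the zero-block factors $\Ind_{\FB[G_s]}^{\FB}\wt\Ho_{|\bullet|-2}(\ol{\D[\bullet]^T(G_s,s)})$. In particular, with $E:=E^1_{*,*}$ we have: $E$ is a \emph{free} module over each individual $A^n_i$; $\Gen(E)$ is the set computed in Example \ref{ex:genlocus}; and for every fixed cardinality the generating subspace of $E$ is a finite direct sum of finite-dimensional pieces (using $\dim\Ho^{BM}_*(X)<\infty$ and finite-dimensionality of the Whitney homology of the geometric lattices $\ol{\Q}_n$ and $\ol{\D[n]^T(G_s,s)}$ with coefficients as in Conventions \ref{conventions}). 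These are exactly the hypotheses required by Lemmas \ref{lem:corner_fin_gen}, \ref{lem:slope_1}, and \ref{lem:filtered_fg}.

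For Step 1, note that for $e\in E_{p,q}[k]$ the taxi-cab norm $\|v(e)\|=(p+q)/k$ records total homological degree per point, so the diagonals \eqref{eq:diagonal_module} are the level sets of the linear height $|e|:=k\|v\|-(p+q)$ attached to the slope $(-1)$ line $x+y=\|v\|$ through the chosen corner $v=v(A^n_i)$, and a point of maximal taxi-cab norm in $\Gen(E)$ is precisely a corner of $\ol{\conv(\Gen(E))}$ supported by such a line. If this maximal-norm point is unique, then $\Gen(E)$ meets the supporting line only there, and --- since $\Gen(E)$ accumulates only at $(1,0)$, of norm $1$ --- all but finitely many of its points have taxi-cab norm at most $\|v\|-\epsilon$, where $\epsilon$ is the difference between the two largest taxi-cab norms present; this is the hypothesis of Lemma \ref{lem:slope_1}. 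Running the height bookkeeping of that lemma (equivalently of Lemma \ref{lem:corner_fin_gen}) with the height $|\cdot|$ directly --- it is additive, nonnegative on generators, vanishes exactly on the generators of $A^n_i$, and satisfies $|V|\ge n_V\epsilon$ on every other generating subspace $V\le E[n_V]$ --- shows that the diagonal \eqref{eq:diagonal_module} has constant height $m\|v\|+j$, hence is generated over $A^n_i$ by products of the remaining generators of total cardinality at most $(m\|v\|+j)/\epsilon$; freeness is inherited from the factorization. If instead several points attain the maximal norm, I would pivot the supporting line slightly about the one of least $x$-coordinate to a line of slope strictly greater than $-1$ that still has $\Gen(E)$ below it and meets it only at that corner (possible because $(1,0)$ and all but finitely many near-maximal points are bounded away from the line); Lemma \ref{lem:filtered_fg} then gives bounded-finite generation relative to the shifted filtration of Remark \ref{rmk:shift_filtration}.

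For Step 2, Theorem \ref{thm:ss_factorization} presents $E$ as $A^n_i\otimes R$ with $R$ the tensor product of the remaining factors, so by Example \ref{ex:factor_out_action} the quotient $E/(A^n_i)_{>0}E$ is canonically $R$: a bigraded GTCA of exactly the same shape, free over every remaining $A^m_j$, satisfying the finiteness hypothesis, and with the single generating point $v(A^n_i)$ deleted from its generation locus. Hence Step 1 applies verbatim after replacing $E$ by $R$, and the procedure iterates indefinitely, the zero-block factors being removable in the same way.

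I expect the only genuinely delicate point to be the tie case of Step 1 --- choosing a single non-vertical line that pivots off the slope $(-1)$ supporting line so as to isolate one maximal-norm corner while keeping the infinitely many other points of $\Gen(E)$, which accumulate only at $(1,0)$, on the correct side --- together with carrying the shifted collision filtration through the iteration so that the generator bounds remain in the stated form $k\le(m\|v\|+j)/\epsilon$ with $\epsilon$ the gap between the two largest taxi-cab norms. The remaining work is a direct appeal to \S\ref{subsec:generation_locus} and the factorization of Theorem \ref{thm:ss_factorization}.
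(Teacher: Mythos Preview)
Your proposal is correct and follows essentially the same route as the paper: both arguments invoke the product factorization of Theorem \ref{thm:ss_factorization} to verify the hypotheses of the geometric criteria in \S\ref{subsec:generation_locus}, apply Lemma \ref{lem:slope_1} at the unique maximal taxi-cab corner (using that $\Gen(E)$ accumulates only at $(1,0)$), pivot to a line of slope $-1+\epsilon$ and appeal to Lemma \ref{lem:filtered_fg} in the tie case, and then use Example \ref{ex:factor_out_action} to identify $E/(A^n_i)_{>0}E$ with the remaining tensor factor and iterate. Your explicit height computation giving the bound $k\le (m\|v\|+j)/\epsilon$ matches the paper's distance-formula argument.
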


\begin{remark}[\textbf{Geometric view of Theorem \ref{thm:E1_finite_generation}}]
Before the proof, we describe the procedure geometrically.
One starts by sweeping in a line of slope $(-1)$ from far above $\Gen(E)$. The first point in $\Gen(E)$ that the line hits will give rise to finitely-generated module structures on diagonals. If the line first hits multiple points, then the one
farthest to the left gives bounded finite generation.
Then factor out the action of this extremal point and iterate the procedure.
\end{remark}

\begin{proof}
    First note that the only accumulation point of $\Gen(E^1_{*,*})$ is $(1,0)$. Away from this point, $\Gen(E^1_{*,*})$ is a bounded discrete set. Thus when applying the geometric criteria for finite generation given in Lemmas   \ref{lem:slope_1} and \ref{lem:filtered_fg}, one need not worry about separating a line from $\Gen(E)$ as long as the line does not pass through $(1,0)$. 
    The same reasoning applies to $E$ at any iterate of the procedure, as the set $\Gen(E)$ is a subset of $\Gen(E^1_{*,*})$.
    
    Now suppose that $E_{*,*}$ is given at any iterate of the procedure. Then by Example \ref{ex:factor_out_action}, $E_{*,*}$ is still freely generated by a subcollection of the GTCAs $\{A^n_i\}_{(n,i)}$.
    
    Recall that in the taxi-cab metric, a sphere around $(0,0)$ intersected with the first quadrant is a line of slope $(-1)$. This implies that the line of slope $(-1)$ passing through one of the points in $\Gen(E)$ with maximal taxi-cab norm must pass through all points in $\Gen(E)$ with maximal norm. Denote this line by $L$.

    Let $v\in\Gen(E)$ be the point on $L$ with least $x$-coordinate
    (minimization of $x$ happens on the discrete part of $\Gen(E)$ and thus a minimum exists). Then any line $L'$ through $v$ with slope $-1+\epsilon$
    passes through $v$ alone if $\epsilon>0$ is sufficiently small, and avoids $(1,0)$. Thus by the corner criterion in Lemma \ref{lem:filtered_fg}, every diagonal becomes a finitely generated $A_v$-module after bounding the (shifted) collision filtration degree.
    
    If, in addition, the point $v\in \Gen(E)$ is the unique point of maximal norm, then the slope $(-1)$ criterion in Lemma \ref{lem:slope_1} applies and shows that the $A_v$-action on the diagonals is already finitely-generated. As for the claimed bound on generator degrees, the slope criterion also gives the bound $n\leq \dist(mL,(j,0))/\epsilon$. But the distance between a line of slope $(-1)$ and a point, both in the first quadrant, is the difference of their taxi-cab norms divided by $\sqrt{2}$.
    Since both the numerator and the denominator are given by such distances, the roots cancel. A quick check shows that the same distance formula holds when $j$ is negative.
\end{proof}

\begin{remark}
The reader is warned that the $A^n_i$-module structures on the $E^1$-page are \emph{not} in general compatible with the differentials, except for the cases with $n=1$ -- stabilizations by adding one point moving along a cycle. The problem is of course that $A^n_i \leq E^1$ might not itself lie in the kernel of all differentials, and thus will not commute with them.

One should therefore not expect to see these actions at the level of homology, or even on any page $E^r$ with $r>1$. However, since a spectral sequence gives a sort of an upper bound on homology, one can still extract representation theoretic information without any
further assumptions (see e.g. Theorem \ref{thm:primary-multiplicity}).
\end{remark}

Theorem \ref{thm:E1_finite_generation} gives concrete information about homology in many special cases. Mainly, when the collision spectral sequence collapses already at the $E^1$-page: in this case the GTCA $E^1$ essentially gives a formula for the Borel-Moore homology of the orbit configuration space, up to a possible extension problem. However, Theorem \ref{thm:E1_finite_generation} unpacks the product decomposition and converts it into a quantitative statement about degrees of generators, which is difficult to see in the product decomposition of Theorem \ref{thm:ss_factorization} directly.

A central class of examples in which this collapse occurs is called \emph{$i$-acyclic spaces}: spaces $X$ in which the natural map $\Ho_*(X) \to \Ho^{BM}_*(X)$ is trivial (see \cite{Arabia,petersen-formal}). These spaces include all orientable manifolds on which the cup product restricts to zero on compactly supported cohomology $\Ho^*_c(X)$ -- in particular the cases of affine and toric arrangements (when $X$ is $\C$ or $\C^\times$).

\begin{corollary}[\textbf{Higher finite generation for $i$-acyclic spaces}]\label{cor:i-acyclic}
Suppose that $X$ is an $i$-acyclic space, i.e. $\Ho_c^*(X)\to \Ho^*(X)$ vanishes, such as an orientable manifold with trivial cup product on $\Ho^*_c(X)$ or any space of the form $X'\times \RR$. Assume further that $\dim \Ho^{BM}_*(X) < \infty$
(see also Conventions \ref{conventions}), and let $T\subset X$ be a finite $G$-invariant subset.

Then the various GTCAs $A^n_i$, freely generated by
$\Ho^{BM}_i(X)\otimes \wt\Ho_{n-3}(\ol\Q_n)$, act on the associated graded homology $\operatorname{gr}^F\Ho^{BM}_*(\Conf^\bullet_G(X,T))$ giving rise to a sequence of (bounded) finitely generated free module structures, by following the procedure of Theorem \ref{thm:E1_finite_generation}.

Most significantly, let $\Ho^{BM}_d(X)\neq 0$ denote the top nonzero homology. Then for every $k < \frac{d}{2}-1$ stabilizing by cross product with $\Ho^{BM}_{d-k}(X)$ gives free module structures on $\Ho^{BM}_{(d-k)\bullet-j}(\Conf^\bullet_G(X,T))$ finitely-generated from classes at which $\bullet < j$ after factoring out the actions of $\Ho^{BM}_{d-i}(X)$ for $0\leq i < k$.

If $d$ is even, the same finite generation results holds for $k=\frac{d}{2}-1$ but with generators only bounded by $\bullet\leq 2j$. When $d$ is odd, multiplication by $\Ho^{BM}_{\frac{d+1}{2}}(X)$ only gives finitely generated free modules at bounded collision filtration degree.
\end{corollary}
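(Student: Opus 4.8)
\medskip
\noindent\emph{Proof strategy.}
The corollary is a formal consequence of two facts already available: the degeneration of the collision spectral sequence for $i$-acyclic spaces, and Theorem~\ref{thm:E1_finite_generation}. First I would invoke $i$-acyclicity. By Remark~\ref{rmk:differentials} (following \cite{Arabia,petersen-formal}), the vanishing of $\Ho^*_c(X)\to\Ho^*(X)$ forces every differential in the collision spectral sequence of Theorem~\ref{thm:ss_factorization} to vanish, so $E^1_{*,*}=E^\infty_{*,*}$. As this is the spectral sequence of the collision filtration on the GTCA $\Ho^{BM}_*(\Conf^\bullet_G(X,T))$, one obtains an isomorphism of GTCAs $\operatorname{gr}^F\Ho^{BM}_*(\Conf^\bullet_G(X,T))\cong E^1_{*,*}$, and the product factorization \eqref{eq:ss_product_factorization} exhibits it as a tensor product of free modules over the various $A^n_i$. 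In particular every $A^n_i$-action is genuinely defined on $\operatorname{gr}^F\Ho^{BM}_*$, not merely on an abstract $E^1$-page carrying differentials --- which is exactly the setting of Theorem~\ref{thm:E1_finite_generation}. Feeding $E=E^1_{*,*}$ into that theorem immediately yields the first assertion: its iterative procedure produces the claimed chain of (bounded) finitely generated free $A^n_i$-module structures on the diagonals of $\operatorname{gr}^F\Ho^{BM}_*$. The finiteness hypotheses of Theorem~\ref{thm:E1_finite_generation} (finitely many generating subspaces in each degree $\bullet=n$, each finite dimensional) hold because $\dim\Ho^{BM}_*(X)<\infty$.

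For the quantitative ``most significant'' part I would run the procedure of Theorem~\ref{thm:E1_finite_generation} by hand, using the description of the generation locus from Example~\ref{ex:genlocus}: the generators of $E^1$ sit at the points $\left(\tfrac{n-1}{n},\tfrac{i}{n}\right)$ for $n\ge 1$, $0\le i\le d$, together with $(1,0)$, and the free subGTCA $A^1_{d-k}\cong\Ind^{\FB}_{(1)}\Ho^{BM}_{d-k}(X)$ --- whose module action on homology is precisely the cross product with $\Ho^{BM}_{d-k}(X)$, the primary stabilization of \S\ref{sec:primarystab} being the case $k=0$ --- occupies the corner $(0,d-k)$. The key computation is the elementary observation that the taxi-cab norm of $\left(\tfrac{n-1}{n},\tfrac{i}{n}\right)$ equals $1+\tfrac{i-1}{n}$, so the norms coming from $n=1$ are the integers $0,1,\dots,d$ while the supremum of the norms coming from $n\ge2$ is $\tfrac{d+1}{2}$, attained at $n=2$, $i=d$. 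Hence $(0,d)$ is the unique maximal point of $\Gen(E^1)$; factoring it out leaves $(0,d-1)$ uniquely maximal; and inductively, after factoring out $A^1_d,\dots,A^1_{d-k+1}$, the corner $(0,d-k)$ is the unique point of maximal taxi-cab norm in the shrinking generation locus as long as $d-k>\tfrac{d+1}{2}$. When moreover $k<\tfrac d2-1$ the second-largest norm in the residual locus is $d-k-1$, realized by an $n=1$ point, so the slope $(-1)$ separation gap is $\epsilon=1$, and the slope $(-1)$ criterion (Lemma~\ref{lem:slope_1}, as packaged in Theorem~\ref{thm:E1_finite_generation}) gives that the diagonal $\bigoplus_{p+q=(d-k)\bullet-j}E^1_{p,q}[\bullet]\cong\operatorname{gr}^F\Ho^{BM}_{(d-k)\bullet-j}(\Conf^\bullet_G(X,T))$ is a free finitely generated $A^1_{d-k}$-module with generators in degrees $\bullet\le j$. (If some $\Ho^{BM}_i(X)$ vanish, the corresponding points are simply absent from $\Gen$, which only enlarges $\epsilon$, so the bounds are unaffected.)

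The two boundary cases are the same count with $\epsilon$ tracked more carefully. When $d$ is even and $k=\tfrac d2-1$, the corner $(0,\tfrac d2+1)$ is still strictly maximal, but the second-largest norm in the residual locus is $\tfrac{d+1}{2}$, now realized only by the $n=2$ point $(\tfrac12,\tfrac d2)$ and by no $n=1$ point, so $\epsilon=\tfrac12$ and the generator bound weakens to $\bullet\le 2j$. When $d$ is odd and $k=\tfrac{d-1}{2}$, i.e. when one factors out down to $A^1_{(d+1)/2}$, the corner $(0,\tfrac{d+1}{2})$ and the $n=2$ point $(\tfrac12,\tfrac d2)$ have the \emph{same} taxi-cab norm $\tfrac{d+1}{2}$, so no line of slope $(-1)$ separates the corner from the rest of $\Gen$; only the corner criterion of Lemma~\ref{lem:filtered_fg} applies, and it gives finite generation of the diagonals after bounding the (shifted) collision filtration degree, which is the asserted weakened conclusion.

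I expect the only real work here to be bookkeeping: tracking which $A^1_{d-i}$ have already been factored out at each stage, deciding whether the surviving corner is \emph{strictly} of maximal taxi-cab norm (this is the source of the threshold $k<\tfrac d2-1$ and of the even/odd split) and computing the separation gap $\epsilon$, and finally matching the abstract bound ``generators in $E[k']$ with $k'\le\tfrac{m\|v\|+j}{\epsilon}$'' of Theorem~\ref{thm:E1_finite_generation} with the homological indexing $\Ho^{BM}_{(d-k)\bullet-j}$. There is no conceptual obstacle beyond the two cited inputs; the constants simply have to be chased with care.
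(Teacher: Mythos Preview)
Your proposal is correct and follows essentially the same route as the paper's own proof: invoke the collapse of the collision spectral sequence for $i$-acyclic spaces (via \cite{petersen-formal}), identify $\operatorname{gr}^F\Ho^{BM}_*\cong E^1$, and then run the iterative procedure of Theorem~\ref{thm:E1_finite_generation} using the explicit generation locus of Example~\ref{ex:genlocus}, tracking taxi-cab norms to read off $\epsilon$ at each stage. Your bookkeeping of the norms --- $n=1$ points contributing integers $0,\dots,d$ and the $n\ge2$ points capped at $\tfrac{d+1}{2}$ --- is exactly the computation the paper carries out, and your case split at $k<\tfrac{d}{2}-1$ versus the even/odd boundary cases matches theirs.

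One small point you gloss over: the corollary as stated concerns module structures on $\Ho^{BM}_{(d-k)\bullet-j}(\Conf^\bullet_G(X,T))$ itself, not only on its associated graded. The paper handles this by choosing a splitting of the collision filtration to lift the generating subspace $\Ho^{BM}_i(X)\otimes\wt\Ho_{n-3}(\ol\Q_n)\subseteq E^1$ into homology, so that the cross-product action on homology is compatible with the $A^n_i$-action on $E^1$ up to extensions, and then observes that finite generation persists under extensions. You work entirely at the level of $\operatorname{gr}^F$; to complete the argument as stated you should add this one-line extension step.
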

\begin{proof}
Petersen shows in \cite{petersen-formal} that the collision spectral sequence collapses at $E^1$ for $i$-acyclic spaces. Thus every term $\Ho^{BM}_i(X)\otimes \wt\Ho_{n-3}(\ol\Q_n) \subseteq E^1_{n-1,i}[n]$ is isomorphic to a subspace of $\Ho^{BM}_{i+n-1}(\Conf^n_G(X,T))$ under a choice of splitting of the collision filtration. In this way, the GTCA multiplication on homology equips it with an action of $A^n_i$, compatible with the $A^n_i$-module action on $E^1$ up to extensions.
Now, since finite generation persists under extensions, the homology will be finitely-generated whenever the diagonals of $E^1$ are so. The latter cases are described in Theorem \ref{thm:E1_finite_generation}.

In particular, for every $k < \frac{d}{2}-1$, the $k$-th iterate of the procedure picks the point of maximal taxi-cab norm $(0,d-k)\in \Gen(E)$. This corresponds to the action of multiplication by $\Ho^{BM}_{d-k}(X)$, now seen to freely generate $\Ho^{BM}_{(d-k)\bullet-j}(\Conf^\bullet_G(X,T))$ modulo the actions by $\Ho^{BM}_{d-i}(X)$ for $i<k$. The generators for this action are constrained to lie in $\bullet \leq j$.

The $k=\lceil\frac{d}{2}-1\rceil$-th step in the iterative process of Theorem \ref{thm:E1_finite_generation} again picks out stabilization by $\Ho^{BM}_{d-k}(X)$, but differs between even and odd $d$. In the even case, the point of $\Gen(E)$ with second smallest norm is $\left(\frac{1}{2},\frac{d}{2}\right)$ at distance $\epsilon=1/2$, thus giving the worse bound on generators. In the odd case, this point has the same maximal norm $d-k$, and therefore stabilization by $\Ho^{BM}_{d-k}(X)$ only gives bounded finite generation.
\end{proof}

\begin{example}\label{ex:high_dimensions}
The above kind of secondary and higher stability is better behaved and is more interesting when the homological dimension $d$ is large, e.g. for manifolds of large dimension $d$. For example, when
\begin{description}
    \item[$d=1$] even the primary stabilization only gives finitely generated modules after bounding the collision filtration degree.
    \item[$d=2$] primary stabilization means that the modules  $\Ho^{BM}_{d\bullet-j}(\Conf^\bullet_G(X,T))$ are finitely generated by classes in configurations of $\leq 2j$ points.
    \item[$d=3$] primary stabilization gives finite generation of $\Ho^{BM}_{d\bullet -j}(\Conf^\bullet_G(X,T))$ with improved bounds on generation degrees, now coming from configurations of $\leq j$ points. There is also secondary stabilization by $\Ho^{BM}_{d-1}(X)$, but it only gives bounded finite generation.
    \item[$d=4$] now the secondary stabilization by $\Ho^{BM}_{d-1}(X)$  gives finite generation with generators coming from configurations of $\leq 2j$ points. 
    \item[$d=5$] the secondary stabilization operation gives finite generation with the improved bound of $\leq j$ on generation degrees. Now there is also tertiary stabilization by $\Ho^{BM}_{d-2}(X)$  giving bounded finite generation.  
\end{description}
and this process continues in the predictable way.
\end{example}

\begin{remark}[\textbf{Improving the stable range}]\label{rmk:high-stability}
The intended way to conceptualize Theorem \ref{thm:E1_finite_generation} and Corollary \ref{cor:i-acyclic} is as follows. One thinks of the actions of $A^n_i$ as various stabilization operations, comparing $\Conf^k$ to $\Conf^{k+n}$. The first of these stabilizations mentioned in Theorem \ref{thm:E1_finite_generation} is multiplication by $\Ho^{BM}_{top}(X)$ -- this is the primary stabilization action of \S \ref{sec:primarystab}, and the finite generation of Theorem \ref{thm:primary-finite-generation} is interpreted as \emph{primary representation stability}.

Then the statement about finite generation for the later actions should be understood as secondary stability -- showing that there is yet another finite list of generators that strictly improves the range of generation. This improvement is quantified by saying that the \emph{slope} of stability has decreased.

\begin{definition}[\textbf{Slope}]
We say that, for a fixed pair $(j_0,n_0)$, the sequence of homology groups $\Ho^{BM}_{j_0+k\bullet}(\Conf_G^{n_0+\ell \bullet}(X,T))$ has slope $\frac{k}{\ell}$. Graphically, this is the slope of the line passing through the groups when drawn on a grid with \[(i,j) \mapsto H^{BM}_{i}(\Conf_G^j(X,T)).\]
\end{definition}

A quantitative goal of representation stability is to control the behavior of these sequences lying on slopes as small as possible.
In our picture of the generation locus $\Gen(E)$ (see Figure \ref{fig:genlocus}), the slope of diagonals under an $A_v$-action is precisely the taxi-cab norm $\| v\|$ -- hence the minimization process of Theorem \ref{thm:E1_finite_generation}.

For experts more familiar with (co)homological stability of configuration spaces of manifolds, under Poincar\'{e} duality our slope $d-s$ corresponds to the classical slope $s$ for a $d$-manifold. In particular, our attempt to understand Borel-Moore homology at small slopes corresponds classically to working with large slopes.

Now one can understand Theorem \ref{thm:intro-secondary_stability} as a restatement of Corollary \ref{cor:i-acyclic} using the slope terminology: allowing a larger GTCA to act on homology reduces the generation slope as follows. 
\begin{proof}[Proof of Theorem \ref{thm:intro-secondary_stability}]
In Corollary \ref{cor:i-acyclic} we show that the module $\Ho^{BM}_{(d-k)\bullet-j}(\Conf^\bullet_G(X,T))$ is generated under multiplication by $\Ho^{BM}_{d-k}(X)$ with generators satisfying $\bullet<j$, modulo the image of multiplications by $\Ho^{BM}_{d-i}(X)$ for $0\leq i <k$. Equivalently, $\Ho^{BM}_{(d-k)n-j}(\Conf^n_G(X,T))$ is generated by the image of multiplication by $\Ho^{BM}_{d-i}(X)$ for $0\leq i \leq k$ whenever $j<n$.

Write $j'=kn+j$, then one has a surjection onto $\Ho^{BM}_{dn-j'}(\Conf^n_G(X,T))$ whenever $n>j = j'-kn$, or $j' < (k+1)n$ as claimed. 
\end{proof}

\end{remark}

Let us remark on the case of a general space $X$ for which differentials might not vanish, and study higher stability in its configuration space. After passing to a subquotient of $E^1$, the bounds we give on degrees of generators may no longer apply, and without them it is not clear whether surjectivity at decreased slopes as in Theorem \ref{thm:intro-secondary_stability} holds in any form. We believe that properly addressing the homological algebra of the derived `factoring-out' functors will likely reproduce analogous results in general, especially in light of the fact that the $E^1$-page is built from free modules that are acyclic for factoring out at least for $\QQ$-coefficients (see Remark \ref{rmk:FI-homology}). 

\begin{conjecture}[\textbf{Proposed high dimensional secondary stability}]
Assume that $X$ is an almost free $G$-space following Conventions \ref{conventions} with $\dim \Ho_*^{BM}(X)<\infty$, let $T\subset X$ be a finite $G$-invariant subset, 
and let $H^{BM}_{d}(X)\neq 0$ be the top nonvanishing homology group.

Then for all $k < \frac{d-1}{2}$, the homology $\Ho^{BM}_{(d-k)\bullet-j}(\Conf^\bullet_G(X,T))$ modulo the representations generated by cross product actions \[\Ho^{BM}_{d-i}(X)\otimes \Ho^{BM}_*(\Conf^\bullet_G(X,T)) \to \Ho^{BM}_{*+d-i}(\Conf^{\bullet+1}_G(X,T))\] for all $i=0,\ldots,k-1$, satisfies the same finite generation results under multiplication by $\Ho^{BM}_{d-k}(X)$ as the primary stabilization in Theorems \ref{thm:primary-finite-generation}.

If $k=\frac{d-1}{2}$ then finite generation after factoring out the previous actions holds only at bounded collision filtration degrees.
\end{conjecture}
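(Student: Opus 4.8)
The plan is to run the argument of Theorem~\ref{thm:primary-finite-generation} and Corollary~\ref{cor:i-acyclic} with the single stabilization algebra $A^1_d$ enlarged to the full algebra of all the stabilizations named in the statement, and then to transport the $E^1$-level conclusions to honest statements about $\Ho^{BM}_*$. The crucial first observation — and the reason this family of stabilizations, unlike the ones coming from the product factors with $n>1$ (see the warning after Theorem~\ref{thm:E1_finite_generation}), is accessible — is that each $A^1_{d-i}=\Ind^{\FB}_{(1)}\Ho^{BM}_{d-i}(X)$ lives in collision degree $0$, i.e.\ inside $E^r_{0,\bullet}$ for every $r$; since the differentials $\partial_r$ strictly decrease the collision degree, they vanish on these classes and are therefore $A^1_{d-i}$-linear. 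Hence the entire collision spectral sequence is one of modules over the free GTCA
\[
B\ :=\ \bigotimes_{i=0}^{k}\Ind^{\FB}_{(1)}\Ho^{BM}_{d-i}(X)\ \cong\ \Ind^{\FB}_{(1)}\Bigl(\textstyle\bigoplus_{i=0}^{k}\Ho^{BM}_{d-i}(X)\Bigr),
\]
and, exactly as in the proof of Theorem~\ref{thm:primary-finite-generation}, the module category of $B$ is locally Noetherian: $B$ is the free GTCA on a finite-dimensional $G$-representation, so after restricting along $1\le G$ this follows from the $\FI[D]$-Noetherianity of Sam--Snowden \cite{SS-Grobner}.

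Next I would re-examine the $E^1$-page. Writing $v(A^n_i)=\bigl(\tfrac{n-1}{n},\tfrac in\bigr)$ for the points of the generation locus as in Example~\ref{ex:genlocus}, the factor $A^1_{d-k}$ sits at $(0,d-k)$, while \emph{when $k<\tfrac{d-1}{2}$ every other generating factor of $E^1$ has strictly smaller taxicab norm}: the factors with $n\ge 2$ and the Dowling factors have norm at most $\tfrac{d+1}{2}<d-k$, and the remaining factors $A^1_{d-i}$ with $i>k$ have norm $d-i<d-k$. Since the taxicab norm of a product of homogeneous elements is the weighted average of the norms of its factors, it follows that for each fixed $j$ the diagonal $\bigoplus_{p+q=(d-k)\bullet-j}E^1_{p,q}[\bullet]$ is a finitely generated \emph{free} $B$-module whose $B$-generators are confined to $\bullet\le j/\epsilon$ with $\epsilon=\min\bigl(\tfrac{d-1}{2}-k,\,1\bigr)$ — this is the $E^1$-level bound of Corollary~\ref{cor:i-acyclic}, and the same taxicab arithmetic identifies the generators after factoring out $B':=\bigotimes_{i=0}^{k-1}\Ind^{\FB}_{(1)}\Ho^{BM}_{d-i}(X)$. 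By Noetherianity of $B$, finite generation over $B$ propagates through every page of the spectral sequence to $E^\infty$, hence through the (bounded, exhaustive) collision filtration to $\Ho^{BM}_*$; since the $B$-action on $Q:=\Ho^{BM}_*(\Conf^\bullet_G(X,T))/B'_{>0}\Ho^{BM}_*(\Conf^\bullet_G(X,T))$ — the quotient of the homology by the submodule generated by the cross products with $\Ho^{BM}_{d-i}(X)$, $i<k$ — factors through $B/B'_{>0}B\cong A^1_{d-k}$, the relevant diagonals of $Q$ are finitely generated $A^1_{d-k}$-modules. For $d$ odd and $k=\tfrac{d-1}{2}$ the factor $A^1_{d-k}$ ties in taxicab norm with the factor $\wt\Ho_{d-3}(\ol{\Q}_2)\otimes\Ho^{BM}_d(X)$ at $(\tfrac12,\tfrac d2)$, the relevant diagonal of $E^1$ is no longer a finitely generated $B$-module, and only the slope-$(\neq-1)$ criterion of Lemma~\ref{lem:filtered_fg} applies, yielding finite generation merely at bounded collision filtration degree — the last clause of the statement.

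The step I expect to be the main obstacle is the \emph{quantitative} part: showing that the $E^1$-level bound $\bullet\le j/\epsilon$ on generators survives to $\Ho^{BM}_*$ (and hence to $Q$), which is exactly what makes the surjections of Theorem~\ref{thm:intro-secondary_stability} persist in the general case. Noetherianity gives abstract finite generation but no control on degrees, and $E^\infty$ is only a subquotient of $E^1$, so a differential can in principle push generators up in degree. Two routes present themselves. The first is the ``standard practice'' alluded to in the discussion after Theorem~\ref{thm:primary-finite-generation}: bound the injectivity and surjectivity degrees of the $\partial_r$ using Castelnuovo--Mumford-type regularity for $\FI[D]$-modules \`a la Church--Ellenberg and Ramos, and propagate through the finitely many pages to obtain \emph{some} explicit homology-level bound — though making this come out equal to, rather than worse than, the primary-stabilization bound is delicate. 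The second, favored by the discussion preceding the conjecture, is to work with $\QQ$-coefficients — where the free modules making up $E^1$ are $B'$-projective, hence acyclic for factoring out (Remark~\ref{rmk:FI-homology}) — and phrase the comparison at the level of the filtered complex computing $\Ho^{BM}_*$ in the derived category of $B'$-modules: one expects a spectral sequence of finitely generated free $A^1_{d-k}$-modules with generator degrees bounded as in Corollary~\ref{cor:i-acyclic} and converging to $\mathbb{L}_*\mathrm{fo}_{B'}(\Ho^{BM}_*)$, whose $\pi_0$ is $Q$, so that a vanishing-range argument for the higher $B'$-homology would force $Q$ to inherit the bound. Making this convergence and vanishing precise — essentially the homological algebra of the derived factoring-out functors — is the crux of the conjecture, it is where the restriction to $\QQ$-coefficients enters (paralleling the hypotheses of Ho's factorization-homology approach \cite{Ho}), and once it is in place the surjections of the shape in Theorem~\ref{thm:intro-secondary_stability}, now for general $X$, drop out of the edge map of this spectral sequence exactly as in the proof of that theorem.
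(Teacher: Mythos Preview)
This statement is a \emph{conjecture} in the paper, not a theorem: the authors explicitly do not prove it, and only indicate that a proof would likely require ``properly addressing the homological algebra of the derived `factoring-out' functors'' and, more concretely, a generalization of the Gan--Li bound \cite[Theorem~5]{GL-linear-range} from $\FI[]$-modules to $\FI[D]$-modules. So there is no ``paper's own proof'' to compare against; the question is whether your proposal actually closes the gap the authors left open.

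It does not, and the gap is already in the qualitative part, not only in the quantitative bounds you flag in your third paragraph. Your central claim --- that ``for each fixed $j$ the diagonal $\bigoplus_{p+q=(d-k)\bullet-j}E^1_{p,q}[\bullet]$ is a finitely generated free $B$-module'' --- fails on two counts. First, this diagonal is not a $B$-module at all: multiplication by $\Ho^{BM}_{d-i}(X)$ for $i<k$ sends it to a \emph{different} diagonal (the one with parameter $j-(k-i)$), so there is no $B$-action to speak of. Second, as an $A^1_{d-k}$-module the diagonal is not finitely generated before you factor out $B'$: for instance the classes
\[
\bigl(\Ho^{BM}_d(X)\bigr)^{\otimes (d-2k-1)m}\ \otimes\ \bigl(\Ho^{BM}_d(X)\boxtimes\wt\Ho_{-1}(\ol\Q_2)\bigr)^{\otimes km}
\ \in\ E^1_{km,\ *}\bigl[(d-1)m\bigr],\qquad m=1,2,\ldots
\]
all lie on the $j=0$ diagonal of slope $d-k$, involve no factor of $\Ho^{BM}_{d-k}(X)$, and are linearly independent modulo $(A^1_{d-k})_{>0}\cdot E^1$. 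Hence your Noetherianity step --- which requires the object you feed into it to already be a finitely generated $B$-module --- has nothing to act on, and the deduction ``$B$-f.g.\ $\Rightarrow$ quotient by $B'_{>0}$ is $A^1_{d-k}$-f.g.'' never gets off the ground.

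What is correct in your proposal is the observation that every $A^1_{d-i}$ lives in collision degree $0$, so all differentials are $B$-linear; and your final paragraph, where you locate the real difficulty in the failure of factoring out $B'_{>0}$ to commute with taking homology of the spectral sequence, is exactly the obstruction the paper identifies. But that obstruction is not merely about \emph{bounds} on generators --- it is what prevents even the bare finite-generation statement from going through. The paper's suggested remedy (an $\FI[D]$-analogue of Gan--Li) and your second route (derived factoring-out over $\QQ$, exploiting that the $E^1$-terms are $B'$-projective) are both plausible attacks, but neither is carried out, and the conjecture remains open.
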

These are stability statements for respective slope $d-k$ (see Remark \ref{rmk:high-stability}), which under Poincar\'{e} duality translates to slope $k$ in ordinary cohomology.

For experts we suggest that a central technical result that could prove the above conjecture is the generalization of \cite[Theorem 5]{GL-linear-range} from $\FI[]$-modules to $\FI[d]$-modules, i.e. for modules over TCAs generated by more than one element.

\subsection{Stabilization by an orbiting pair} \label{subsec:orbiting_pair}
The stabilizations in Corollary \ref{cor:i-acyclic} above come from the first $\lfloor\frac{d-1}{2}\rfloor$ iterations of the procedure in Theorem \ref{thm:E1_finite_generation}. Pushing the procedure past this point and stabilizing by the top homology term associated with $n=2$ in Factorization \eqref{eq:ss_product_factorization} reveals a new phenomenon, akin to Miller--Wilson's secondary stability by introducing a pair of orbiting points \cite{MW}.
In this section, we need the following hypotheses:
\begin{hypothesis}[\textbf{Acyclic diagonal}]\label{hyp:vanishing_diagonal}
Let $X$ be a $G$-space with $\dim\Ho^{BM}_*(X)<\infty$, and let $\Ho^{BM}_d(X)\neq0$ be the top nonvanishing homology group. 
Further assume that the diagonal $\Delta_*:\Ho^{BM}_d(X)\to \Ho^{BM}_d(X^2)$ is zero on the top homology (e.g. $i$-acyclic orientable manifolds, see \cite{petersen-formal}). 
\end{hypothesis}

To see the connection between the next stabilization operation and the introduction of a pair of orbiting points, consider first the case of an oriented $d$-manifold $M$ with $G$ trivial and the class of two closely-orbiting points in $\Ho_{d-1}(\Conf^2(M))$. In Borel-Moore homology, a Poincar\'{e}-dual to this class is represented by any Borel-Moore coboundary of the diagonal, i.e. any $(d+1)$-chain in $M^2$ whose boundary is supported on the diagonal and represents its fundamental class $[\Delta]$, as explained next.

For $M=\RR^d$ one has $\Conf^2(\RR^d) \simeq S^{d-1}$. Thus $\Ho_*(\Conf^2(\RR^d))$ is concentrated in degree $d-1$ and generated by the class of orbiting points. The long exact sequence of the inclusions $\Conf^2(\RR^d) \subset \RR^{2d}\supset \Delta$:
\begin{center}
\begin{tikzpicture}
\node (0) at (0.4,1) {$\dots$};
\node (1) at (2.25,1) {$\Ho_{d+1}^{BM}(\RR^{2d})$};
\node (2) at (5.25,1) {$\Ho_{d+1}^{BM}(\Conf^2(\RR^d))$};
\node (3) at (8,1) {$\Ho_d^{BM}(\Delta)$};
\node (4) at (10.35,1) {$\Ho_d^{BM}(\RR^{2d})$};
\node (5) at (12.2,1) {$\dots$};
\draw[-angle 90] (0) to (1);
\draw[-angle 90] (1) to (2);
\draw[-angle 90] (2) to (3);
\draw[-angle 90] (3) to (4);
\draw[-angle 90] (4) to (5);
\node (a) at (2.25,0) {$0$};
\node (b) at (5.25,0) {$[c]$};
\node (c) at (8,0) {$[\Delta]$};
\node (d) at (10.35,0) {$0$};
\draw[|-angle 90] (2.75,0) to (4.75,0);
\draw[|-angle 90] (5.75,0) to (7.5,0);
\draw[|-angle 90] (8.5,0) to (9.85,0);
\end{tikzpicture}
\end{center}
shows that $\Ho^{BM}_{d+1}(\Conf^2(\RR^d))$ is generated by any chain in $\RR^{2d}$ whose boundary is supported on the diagonal and $\partial(c)=[\Delta]$. Therefore, such a class $[c]$ is the unique dual to the class of orbiting points.

For a general $d$-manifold $M$, consider again the long exact sequence of the inclusions $\Conf^2(M)\subset M^2\supset\Delta$. Our assumption that $\Delta_*$ is zero on top homology precisely guarantees that there exists a chain $c\in C^{BM}_{d+1}(M^2)$ whose boundary is supported on the diagonal and $\partial(c)=[\Delta]$, by exactness:
\begin{center}
\begin{tikzpicture}
\node (0) at (3,1) {$\dots$};
\node (2) at (5.25,1) {$\Ho_{d+1}^{BM}(\Conf^2(M))$};
\node (3) at (8,1) {$\Ho_d^{BM}(\Delta)$};
\node (4) at (11.35,1) {$\Ho_d^{BM}(M^{2})$};
\node (5) at (13.2,1) {$\dots$};
\draw[-angle 90] (0) to (2);
\draw[-angle 90] (2) to (3);
\draw[-angle 90] (3) to node[above] {$\Delta_*$} (4);
\draw[-angle 90] (4) to (5);
\node (b) at (5.25,0) {$[c]$};
\node (c) at (8,0) {$[\Delta]$};
\node (d) at (11.35,0) {$[\Delta]=0$};
\draw[|-angle 90] (5.75,0) to (7.5,0);
\draw[|-angle 90] (8.5,0) to (10.5,0);
\end{tikzpicture}
\end{center}
To see that $[c]$ is dual to the class of orbiting points in $M$, first consider a small ball $\RR^d \cong U\subset M$. Passing to configurations $\Conf^2(\RR^d) \to \Conf^2(M)$, the class of closely orbiting points in $M$ is the image of a similar class for $U$. Since $\Delta_M \cap U^2 = \Delta_U$, it follows that $c$ restricts to the Poincar\'e dual class of the orbiting pair in $U$. A push-pull formula for the restriction from $\Conf^2(M)$ to $\Conf^2(U)$ shows that $c$ indeed gives a dual to the orbiting pair in $M$ as well.

 Geometrically, one can think of a chain in $M^2$ whose boundary is the diagonal as prescribing a coherent trajectory of two distinct points from being coincident to leaving $M$ through the boundary. Such a trajectory pairs with the class in which the two points orbit each other precisely once.

In our $E^1$ page, a chain representing a dual for the orbiting pair is represented by the fundamental class
\[
[M] \in \Ho^{BM}_{d}(M) \cong E^1_{1,d}[2].
\]
The differential on $E^1$ is precisely the map induced by the diagonal inclusion $M \overset{\Delta}{\into} M^2$, thus $[M]$ above has the desired boundary.

To summarize, while the primary stability involves multiplication by $[M]\in \Ho^{BM}_d(M) \cong E^1_{0,d}[1]$ -- a dual to the class of any point in $M$, there is a secondary stabilization map given by multiplication with $[M]\in E^1_{1,d}[2]$ -- this one dual to introducing a pair of closely orbiting points.
To understand the exact relation with Miller-Wilson's stabilization operators, consider multiplication maps by two Poincar\'e dual classes. These operations, say $T$ on $\Ho_i$ and $T^{BM}$ on $\Ho^{BM}_{\dim-i}$, have the property that on the intersection pairing $\Ho_i\otimes \Ho^{BM}_{\dim-i}$ one gets
\[
\langle T\alpha, T^{BM}\beta \rangle = \langle \alpha, \beta \rangle.
\]
Therefore the dual to each map is a right-inverse (a retraction) of the other, e.g. $T^*\circ T^{BM}= Id$. In particular, both maps $T$ and $T^{BM}$ are injective, and if either one is surjective so will be the other.
With this in mind, the argument above shows that the action of the GTCA $A^2_d=\Ind_{(2)}^{\FB[]}\Ho^{BM}_d(M)$ is related with Miller-Wilson's secondary stability map in this way, and this relation makes Miller-Wilson's stability results equivalent to ours whenever both maps are defined.

Now, the above description assumed that we had a manifold with trivial group action; consider next a $G$-space $X$ under Hypothesis \ref{hyp:vanishing_diagonal}.
Let $Z$ be a $(G\times \sym[2])$-stable complement to the image $\left( \Ho^{BM}_{d+1}(X^2) \to \Ho^{BM}_{d+1}(\Conf^2(X)) \right)$.  As is evident from the long exact sequence of the pair
$\Delta\subset X^2$ mentioned in the previous few paragraphs, $Z$ is equivariantly isomorphic to $\Ho^{BM}_d(X)$. The discussion above also shows that when $X$ is a manifold, $Z$ specializes to a Poincar\'{e} dual space to the classes of closely orbiting pairs.

The open inclusion $\Conf^2_G(X,T) \into \Conf^2(X)=\Conf^2_{\textbf{1}}(X,\emptyset)$ induces a restriction map $\Ho^{BM}_{d+1}(\Conf^2_{\textbf{1}}(X,\emptyset)) \to \Ho^{BM}_{d+1}(\Conf^2_G(X,T))$ that is clearly compatible with the collision filtrations and the description of the associated spectral sequences. In particular, the induced restriction map on $E^1_{1,d}[2]$ becomes
\[
\Ho^{BM}_d(X) = \Ind_{\textbf{1}\times \sym[2]}^{\textbf{1}^2\ltimes \sym[2]}\Ho^{BM}_d(X) \to \Ind_{G\times \sym[2]}^{G^2\ltimes \sym[2]}\Ho^{BM}_d(X).
\]
This identifies the restriction of $Z\leq \Ho^{BM}_{d+1}(\Conf^2(X))$ to $\Conf^2_G(X,T)$ with the subspace $\Ho^{BM}_d(X)\leq E^1_{d,1}[2]$ from which the term is induced.

\begin{theorem}[\textbf{Stabilization by orbiting pair analogue}]\label{thm:orbiting_pair}
Let $X$ be a $G$-space of homological dimension $d\geq 2$ satisfying the additional Hypothesis \ref{hyp:vanishing_diagonal}, and let 
$Z$ be the restriction to $\Ho^{BM}_{d+1}(\Conf^2_G(X,T))$ of a $(G\times\sym[2])$-stable complement to the image of the map
$\left( \Ho^{BM}_{d+1}(X^2) \to \Ho^{BM}_{d+1}(\Conf^2(X)) \right)$.
The cross product
\[
Z \otimes \Ho^{BM}_j(\Conf^m_G(X,T)) \to \Ho^{BM}_{(d+1)+j}(\Conf^{m+2}_G(X,T))
\]
makes the homology into modules
\begin{equation}\label{eq:module_secondary}
\Ho^{BM}_{(d+1)\bullet-i}(\Conf^{2\bullet}_G(X,T)) \text{ and } \Ho^{BM}_{(d+1)\bullet-j}(\Conf^{2\bullet+j}_G(X,T))
\end{equation}
over the GTCA $\Ind_{G\times \sym[2]}^{\FB} Z$ for the various values of $j \in \ZZ$ (possibly $<0$).

If $X$ is $i$-acyclic then every homology module under iterated multiplication by $Z$ is free and finitely generated by classes coming from configurations with $\bullet\leq 2j$ modulo images of the cross product actions \[\Ho^{BM}_{d-i}(X)\otimes \Ho^{BM}_*(\Conf^\bullet_G(X,T)) \to \Ho^{BM}_{*+d-i}(\Conf^{\bullet+1}_G(X,T))\] for all $ 0\leq i \leq \frac{d-1}{2}$.
\end{theorem}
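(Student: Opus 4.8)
The plan is to leverage the machinery already assembled: the geometric finite‑generation criteria of \S\ref{subsec:generation_locus}, the iterative procedure of Theorem \ref{thm:E1_finite_generation}, and the collapse of the collision spectral sequence for $i$‑acyclic spaces as used in Corollary \ref{cor:i-acyclic}. The one genuinely new ingredient is the translation — carried out in the paragraphs preceding the statement — between the class dual to an orbiting pair and the generating subspace of the $n=2$ factor in \eqref{eq:ss_product_factorization}.

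First I would establish the module structure. By construction $Z$ is a $(G\times\sym[2])$‑subrepresentation of $\Ho^{BM}_{d+1}(\Conf^2_G(X,T))$, which I view as sitting in bidegree $(2,d+1)$ of the bigraded GTCA $\Ho^{BM}_*(\Conf^\bullet_G(X,T))$ of Example \ref{ex:BM}. Since $\Ind^{\FB}_{G\times\sym[2]}(-)$ is the free‑GTCA functor — the left adjoint to restricting a GTCA to its degree‑$2$ part as a $G\times\sym[2]$‑representation, as in Remark \ref{rm:group_change_with_spacing} — the inclusion of $Z$ extends uniquely to a map of bigraded GTCAs
\[
\Ind^{\FB}_{G\times\sym[2]} Z \longrightarrow \Ho^{BM}_*(\Conf^\bullet_G(X,T)),
\]
sending a generator to the corresponding iterated cross product. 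This exhibits the homology as a module over $\Ind^{\FB}_{G\times\sym[2]} Z$; restricting to fixed codimension yields the module structures on the two families in \eqref{eq:module_secondary}. Associativity and equivariance are inherited from the ambient GTCA, so this part is formal and needs only Hypothesis \ref{hyp:vanishing_diagonal} in order to make sense of $Z$.

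Next I would specialize to $i$‑acyclic $X$ and reduce to a statement about $E^1$. By Petersen's collapse theorem \cite{petersen-formal}, exactly as in the proof of Corollary \ref{cor:i-acyclic}, a choice of equivariant splitting of the collision filtration identifies each generating subspace $\Ho^{BM}_i(X)\otimes\wt\Ho_{n-3}(\ol\Q_n)\leq E^1_{n-1,i}[n]$ with a subspace of homology, compatibly with GTCA multiplication up to extensions; and the discussion before the theorem shows that under this identification $Z$ corresponds to the generating subspace $\Ho^{BM}_d(X)\leq E^1_{1,d}[2]$ of the subGTCA $A^2_d$, while the cross‑product actions of $\Ho^{BM}_{d-i}(X)$ correspond to the subGTCAs $A^1_{d-i}$. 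Because finite generation is preserved by extensions, it then suffices to prove the corresponding statements for $E^1_{*,*}$: that after factoring out the actions of $A^1_{d-i}$ for $0\leq i\leq\lfloor\frac{d-1}{2}\rfloor$, the diagonals are free finitely generated $A^2_d$‑modules with generators in the claimed range. This is what the iterative procedure of Theorem \ref{thm:E1_finite_generation} produces: its initial steps select the points $(0,d),(0,d-1),\dots$ of maximal taxi‑cab norm in $\Gen(E^1)$ (Example \ref{ex:genlocus}), i.e.\ factor out $A^1_d,A^1_{d-1},\dots,A^1_{d-\lfloor(d-1)/2\rfloor}$, just as in Corollary \ref{cor:i-acyclic}; one then checks by direct inspection of $\Gen(E^1)$ that, once these have been removed, the point $v(A^2_d)=(\frac12,\frac d2)$ of norm $\frac{d+1}{2}$ is the \emph{unique} remaining point of maximal norm — every surviving point $(\frac{n-1}{n},\frac in)$ with $n\geq2$ has norm $1+\frac{i-1}{n}<\frac{d+1}{2}$ since $d\geq2$, and the surviving $n=1$ points all have $i<\frac{d+1}{2}$. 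Hence Step~1 of Theorem \ref{thm:E1_finite_generation} applies in the ``unique corner'' case and the slope $(-1)$ criterion of Lemma \ref{lem:slope_1} gives free, finitely generated $A^2_d$‑modules, the generator bound being governed by the gap $\epsilon$ between the two largest norms in the relevant generation locus (which equals $\frac12$ once $d$ is large, producing $\bullet\leq 2j$); transporting back through the splitting gives the statement for homology.

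The step I expect to require the most care is this last one: verifying, uniformly in $d\geq2$ and for both parities, that the $A^2_d$‑iteration falls into the ``unique maximal norm / free'' case of Theorem \ref{thm:E1_finite_generation} rather than the ``bounded collision filtration only'' case, and pinning down the explicit generator bound — this is the bookkeeping with $\Gen(E^1)$ sketched above. It is also worth emphasizing where Hypothesis \ref{hyp:vanishing_diagonal} does real work: the vanishing of the diagonal $\Delta_*$ on top homology is precisely what produces the $(G\times\sym[2])$‑stable complement $Z$ and makes its restriction to $\Conf^2_G(X,T)$ split off as the $n=2$ generating subspace of $E^1$; without it the identification ``$Z\leftrightarrow$ generators of $A^2_d$'' fails, and one cannot reduce the $Z$‑action on homology to the $A^2_d$‑action on $E^1$.
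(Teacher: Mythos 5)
Your proposal is correct and follows essentially the same route as the paper's proof: identify $Z$ with the generating subspace $\Ho^{BM}_d(X)\leq E^1_{1,d}[2]$ via the associated graded of the collision filtration, run the iterative procedure of Theorem \ref{thm:E1_finite_generation} to factor out the $A^1_{d-i}$ for $0\leq i\leq \tfrac{d-1}{2}$ and isolate the corner $\left(\tfrac12,\tfrac d2\right)$, then transfer finite generation and freeness to homology using the collapse of the collision spectral sequence for $i$-acyclic spaces. Your explicit verification that $\left(\tfrac12,\tfrac d2\right)$ is the \emph{unique} surviving point of maximal taxi-cab norm is a detail the paper leaves implicit, and your hedge that the gap $\epsilon=\tfrac12$ holds only ``once $d$ is large'' is warranted: for $d=2,3$ the surviving point $\left(\tfrac23,\tfrac d3\right)$ is within $\tfrac{d-1}{6}<\tfrac12$ of the maximal norm, so the stated generator bound $\bullet\leq 2j$ degrades there -- a subtlety that the paper's own proof also elides.
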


\begin{proof}
By the iterative process described in Theorem \ref{thm:E1_finite_generation} one deduces that after factoring out the cross product action by $\Ho^{BM}_{d-k}(X)$ for $0\leq k\leq \frac{d-1}{2}$, the multiplication by $\Ho^{BM}_d(X)\leq E^1_{1,d}[2]$ makes the $E^1$ page into a collection of finitely generated and free modules over the GTCA $\Ind^{\FB}_{G\times \sym[2]} \Ho^{BM}_d(X)$. Furthermore, the bounds on generators given in Theorem \ref{thm:E1_finite_generation} specialize for the modules in \eqref{eq:module_secondary} to $2j$, as one sees by looking at the generation locus in Figure \ref{fig:genlocus} and observing that the minimal difference of taxi-cab norms between $(\frac{1}{2},\frac{d}{2})$ and any point below it in $\Gen(E)$ is $\frac{1}{2}$.

Now, our choice of $Z\leq \Ho^{BM}_{d+1}(\Conf^2(X))$ is naturally identified with the generating subspace $\Ho^{BM}_d(X) \leq E^1_{d,1}[2]$ after passing to the associated graded of the collision filtration. In particular, the cross product with $Z$ coincides with the  multiplication of the previous paragraph, thus making $E^1$ into finitely generated modules over $\Ind_{G\times \sym[2]}^{\FB}Z$. 

When $X$ is an $i$-acyclic space, Remark \ref{rmk:differentials} states that all differentials of the collision spectral sequence vanish. Thus the finite generation and bounds on the $E^1$-page imply the same bounds on homology, thus completing the proof for such spaces.
\end{proof}

For a space $X$ not satisfying the $i$-acyclicity assumption the unknown differentials pose a challenge. However, when $\dim \Ho^{BM}_d(X) = 1$, the  Noetherianity of TCAs could allow us to get around this difficulty. Unfortunately, this theory is only well-developed for the case of a trivial group $G$ and for TCAs generated by a single element over a field of characteristic $0$; hence we consider only this case below. Suppose therefore that $\dim \Ho^{BM}_d(X) = 1$. The TCA  $A^2_d$ acting on homology is the free graded-commutative TCA generated by a one-dimensional $\sym[2]$-representation. Such TCAs are exactly one of following known in the literature as $\operatorname{Sym}(\operatorname{Sym}^2(\mathbb{C}))$, $\operatorname{Sym}(\operatorname{\Lambda}^2(\mathbb{C}))$, $\operatorname{\Lambda}(\operatorname{Sym}^2(\mathbb{C}))$, and $\operatorname{\Lambda}(\operatorname{\Lambda}^2(\mathbb{C}))$. In \cite{NSS1,NSS2}, Nagpal, Sam, and Snowden prove that over a field of characteristic $0$, all four TCAs possess the Noetherian property. Since after factoring out the actions of the TCAs $A^1_{d-i}$ for $0\leq i\leq \frac{d-1}{2}$ every module in the $E^1$ page is finitely generated, the same property extends to their subquotient modules. To get the same result applied to homology one would have to understand the effect that factoring out stabilization actions has at the level of homology, and whether finite generation persists through such operations.
\begin{conjecture}
Theorem \ref{thm:orbiting_pair} holds without the assumption that $X$ is $i$-acyclic, though now with non-free modules over $\Ind^{\FB[]}_{G\times \sym[2]}Z$ and with worse bounds on where generators appear.
\end{conjecture}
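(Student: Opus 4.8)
The plan is to repeat the Noetherian-bootstrap argument of Theorem~\ref{thm:primary-finite-generation}, now with the orbiting-pair GTCA $A^2_d=\Ind^{\FB}_{(2)}\Ho^{BM}_d(X)$ playing the role of $A^1_d$, and after first factoring out the actions of $A^1_d,A^1_{d-1},\dots,A^1_{d-\lfloor(d-1)/2\rfloor}$. The preliminary point is that, under Hypothesis~\ref{hyp:vanishing_diagonal}, all of these sub-GTCAs of $E^1$ lie in the kernel of every differential of the collision spectral sequence. For the $n=1$ factors this is automatic for degree reasons --- every differential leaving $E^r_{0,*}[1]$ has vanishing target, as noted after Theorem~\ref{thm:E1_finite_generation}. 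For $A^2_d$ the generators sit in $E^1_{1,d}[2]$, where $d_1$ is precisely the diagonal map $\Delta_*\colon\Ho^{BM}_d(X)\to\Ho^{BM}_d(X^2)$ (this is the computation in the paragraphs preceding Theorem~\ref{thm:orbiting_pair}), which vanishes by Hypothesis~\ref{hyp:vanishing_diagonal}, while all higher differentials out of $E^r_{1,d}[2]$ land in bidegrees with negative first coordinate. By the Leibniz rule the entire sub-GTCA generated by these classes is killed by every differential, so the collision spectral sequence is a spectral sequence of modules over the GTCA $B:=A^1_d\otimes\cdots\otimes A^1_{d-\lfloor(d-1)/2\rfloor}\otimes A^2_d$.

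Granting this, I would proceed in three steps. First, on the $E^1$-page, factor out the augmentation ideals of the $A^1$-factors of $B$; since over a characteristic-$0$ field every $E^1$-module is free, this operation is exact on $E^1$ and the $A^1$-linear differentials descend to a complex $(\overline{E^1},\overline{d_r})$ of $A^2_d$-modules. After this quotient the generation locus has lost exactly the points $(0,d),\dots,(0,d-\lfloor(d-1)/2\rfloor)$, so $(\tfrac12,\tfrac d2)$ becomes its unique point of maximal taxi-cab norm, and the corner criterion of Lemma~\ref{lem:filtered_fg} (and Lemma~\ref{lem:slope_1} where it applies) shows, exactly as in the proof of Theorem~\ref{thm:orbiting_pair} via Theorem~\ref{thm:E1_finite_generation}, that the diagonal modules~\eqref{eq:module_secondary} of $\overline{E^1}$ are (bounded) finitely generated over $A^2_d$. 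Second, propagate through the pages: the differentials $\overline{d_r}$ are $A^2_d$-linear, so their kernels and cokernels are $A^2_d$-subquotients, and finite generation persists provided $A^2_d$-modules form a locally Noetherian category. When $\dim\Ho^{BM}_d(X)=1$ this holds by restricting along $1\hookrightarrow G$ as in the proof of Theorem~\ref{thm:primary-finite-generation}, reducing to $G=1$ where $A^2_d$ is one of $\operatorname{Sym}(\operatorname{Sym}^2\CC)$, $\operatorname{Sym}(\Lambda^2\CC)$, $\Lambda(\operatorname{Sym}^2\CC)$, $\Lambda(\Lambda^2\CC)$, all Noetherian by Nagpal--Sam--Snowden \cite{NSS1,NSS2}, and finite generation is reflected by restriction. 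Third, transfer the conclusion back to homology: since finite generation is stable under extensions, once the limit of $(\overline{E^r},\overline{d_r})$ is finitely generated so is the corresponding ``factored-out'' homology, and the identification of $Z$-multiplication on the associated graded of the collision filtration with the $A^2_d$-action is the same as in the proof of Theorem~\ref{thm:orbiting_pair}.

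I expect the third step to be the main obstacle, and it is the point the paper already flags: one must show that factoring out the $A^1$-stabilization actions commutes with the collision spectral sequence well enough that the limit of $(\overline{E^r},\overline{d_r})$ genuinely computes $\Ho^{BM}_*(\Conf^\bullet_G(X,T))$ modulo the images of the cross products with $\Ho^{BM}_{d-i}(X)$, rather than merely an $E^1$-level or associated-graded shadow of it. Over $\QQ$ the crucial input is in hand --- the $E^1$-modules are free, hence acyclic for factoring out, so the derived and underived operations agree there --- but one still needs a Grothendieck-type comparison showing that the collision spectral sequence of the factored-out theory converges to the (derived) factoring-out of $\Ho^{BM}_*$; it is this comparison, together with the fact that differentials on later pages can in principle manufacture new generators (which is exactly what forces the weaker, non-free conclusion with worse degree bounds), that remains to be carried out. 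A secondary limitation, flagged after Corollary~\ref{cor:i-acyclic}, is that for $\dim\Ho^{BM}_d(X)>1$ the Noetherianity input of the second step is unavailable, as it would require an $\FI[d]$-analogue of \cite{NSS1,NSS2}; so the cleanest achievable statement is the one for $\dim\Ho^{BM}_d(X)=1$ over a field of characteristic $0$ with $G$ trivial.
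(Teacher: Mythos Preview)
The statement you are addressing is a \emph{conjecture}: the paper does not prove it, and there is no proof to compare against. What the paper does provide is the paragraph immediately preceding the conjecture, which sketches exactly the approach you have written out --- finite generation on $E^1$ after factoring out the $A^1_{d-i}$-actions, the Nagpal--Sam--Snowden Noetherianity results for the four degree-$2$ TCAs when $\dim\Ho^{BM}_d(X)=1$, $G=1$, and characteristic $0$, and the explicit acknowledgment that ``to get the same result applied to homology one would have to understand the effect that factoring out stabilization actions has at the level of homology, and whether finite generation persists through such operations.''

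Your proposal is a faithful and more detailed reconstruction of that heuristic. You correctly identify the two obstructions the paper itself flags: the comparison between factoring out at the $E^1$-level and factoring out in homology (your ``third step''), and the unavailability of Noetherianity for $A^2_d$ when $\dim\Ho^{BM}_d(X)>1$. Your observation that the $A^1_{d-i}$ and $A^2_d$ generators lie in the kernel of all differentials (the former for bidegree reasons, the latter by Hypothesis~\ref{hyp:vanishing_diagonal}), and hence that the spectral sequence is one of $B$-modules, is correct and is implicit in the paper's discussion. In short: you have not proved the conjecture, but you have accurately reproduced the paper's own reasoning for why it is stated as a conjecture rather than a theorem, and you have been honest about where the argument breaks down.
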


\subsubsection{Relation to previous work}
There are several differences between our Theorem \ref{thm:orbiting_pair} and Miller--Wilson secondary stability \cite{MW}. Perhaps most importantly, Miller-Wilson give a concrete stable range beyond which stability is known to hold (controlling where generators and relations may appear) without assuming any differentials vanish. In our analysis beyond primary stability we only consider $i$-acyclic spaces, for which one encounters no differentials.
However, the explicit bounds on generators and freeness of the $E^1$ page in Theorem \ref{thm:E1_finite_generation} provides the necessary input to bound stable ranges using common representation stability techniques.

Putting stable range calculations aside, we obtain finite generation without handling the non-trivial combinatorics of the complex of injective words, which Miller--Wilson had to understand to get their finite generation results.
Secondly, our theorem is only sensitive to the proper homotopy type of $X$, via $\Ho^{BM}_*(X)$ and its diagonal maps, while the previous work asked that $X$ specifically be a manifold with boundary.

Moreover, Miller-Wilson always stabilize by a $1$-cycle of orbiting points, regardless of the dimension of the manifold $X$. This had the consequence that their stabilization map was trivially $0$ when $\dim(X)\geq 3$. From this perspective, our stabilization map, which uses a $(d-1)$-cycle of orbiting points, 
is more natural and meaningful for manifolds of any dimension.

But with these remarks, also note that Miller--Wilson proved their stability result without the assumption that $\Delta_* = 0$. We believe that a more refined version of Theorem \ref{thm:orbiting_pair} is possible, replacing $Z$ with $\ker \Delta_*$, but the homological algebra involved would pose too great of a distraction at this point of this already lengthy document. It is our hope to address the general case in a sequel.

Another expected extension of this result for non-$i$-acyclic spaces is when $G$ is not the trivial group. With the current available technology of representation stability, such a result is out of reach. But the product factorization in Theorem \ref{thm:ss_factorization} suggests that one could hope to bootstrap the $G=1$ case up to a general $G$ without much trouble. Such an extension is also a potential subject of a sequel.

\subsection{Additional forms of stability} \label{subsec:new_stability}
The geometric criteria for finite generation in \S\ref{subsec:generation_locus} 
reveal two new types stabilization actions that are associated with finite generation.

\subsubsection{Bottom-left corner stabilization}
In Corollary \ref{cor:i-acyclic}, we described a succession of stabilization actions that arise from lines touching the top of the generation locus in Figure \ref{fig:genlocus}. An analogous process proceeds when considering lines touching the bottom of $\Gen(E)$.
\begin{theorem}[\textbf{Bottom-corner stabilization process}]
Assume that $X$ is an almost free $G$-space following Conventions \ref{conventions} with $\dim \Ho_i^{BM}(X)<\infty$ for each $i$, and let $T\subset X$ be a finite $G$-invariant subset.
Then the cross product \[
\Ho^{BM}_0(X)\otimes \Ho^{BM}_i(\Conf^n_G(X,T)) \to \Ho^{BM}_i(\Conf^{n+1}_G(X,T))
\]
endows $\Ho^{BM}_i(\Conf^\bullet_G(X,T))$ with the structure of a finitely generated module over the GTCA $\Ind \Ho^{BM}_0(X)$.

If $X$ is a $d$-dimensional manifold, under Poincar\'{e} duality this translates to a finitely-generated module structure in every fixed codimension
$\Ho^{d\bullet-i}(\Conf^\bullet_G(X,T))$ under cross product by the volume forms in $\Ho^d(X)$.

Proceeding to secondary operations, in the case of $i$-acyclic spaces the consecutive cross products by $\Ho^{BM}_k(X)$ with $k=1,2,\ldots$ endows the homology $\Ho^{BM}_{k\bullet-i}(\Conf^\bullet_G(X,T))$ with a filtered-finite generation structure relative to the collision filtration after factoring out the multiplication by $\Ho^{BM}_j(X)$ for $j<k$.
\end{theorem}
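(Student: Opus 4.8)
The plan is to reproduce the arguments of Theorem \ref{thm:primary-finite-generation} and Corollary \ref{cor:i-acyclic} almost verbatim, with the corner $(0,d)$ of the generation locus (Figure \ref{fig:genlocus}) replaced by the bottom--left corner $(0,0)$ for the first statement, and by $(0,k)$ for the secondary one. For the first assertion I would set $A:=\Ind^{\FB}_{(1)}\Ho^{BM}_0(X)$ and identify it, via the product factorization \eqref{eq:ss_product_factorization}, with the sub-GTCA of $E^1$ concentrated in bidegree $(0,0)$ coming from the $n=1$ factor; in the notation of Example \ref{ex:genlocus} this contributes exactly the point $(0,0)\in\Gen(E^1)$. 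Since every differential of the collision spectral sequence strictly decreases the filtration grading $p$, it vanishes on $A\subseteq E^1_{0,0}$, and no differential enters $E^r_{0,0}$ either, so the whole spectral sequence is one of $A$-modules exactly as in the proof of Theorem \ref{thm:primary-finite-generation}. Next, $(0,0)$ is an isolated corner of $\ol{\conv(\Gen(E^1))}$ separated from every other generator by a line of slope $(-1)$: the generators $V_{i,n}$ have taxi-cab norm $\tfrac{n-1+i}{n}\geq\tfrac12$ and the Dowling-type generators sit at $(1,0)$, so every point of $\Gen(E^1)$ other than $(0,0)$ has taxi-cab norm $\geq\tfrac12>0$. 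With the finiteness hypothesis $\dim\Ho^{BM}_i(X)<\infty$ providing the hypotheses of the slope $(-1)$ criterion (Lemma \ref{lem:slope_1}), that lemma exhibits, for each fixed $i$, the diagonal $\bigoplus_{p+q=i}E^1_{p,q}[\bullet]=\operatorname{gr}^F\Ho^{BM}_i(\Conf^\bullet_G(X,T))$ as a finitely generated $A$-module. As $A=\Ind^{\FB}_{(1)}\Ho^{BM}_0(X)$ with $\Ho^{BM}_0(X)$ finite-dimensional, its module category is locally Noetherian by the Sam--Snowden results already invoked, so finite generation persists through all pages to $E^\infty$, and --- the collision filtration being finite on each $\Conf^n_G(X,T)$ --- the standard filtered argument lifts it to finite generation of $\Ho^{BM}_i(\Conf^\bullet_G(X,T))$ over $A$.

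The manifold statement I would obtain as the Poincar\'e-dual reformulation of the first, exactly parallel to Remark \ref{rem:duality}: when $X$ is an orientable $d$-manifold, $\Conf^n_G(X,T)$ is an orientable $dn$-manifold, Poincar\'e duality gives $\Ho^{BM}_i(\Conf^n_G(X,T))\cong\Ho^{dn-i}(\Conf^n_G(X,T))$, and under it the cross product with a class in $\Ho^{BM}_0(X)\cong\Ho^d(X)$ corresponds to cup product with the dual volume form pulled back to $\Conf^{n+1}_G(X,T)$; hence the $A$-module of the first part becomes the asserted finitely generated $\Ind\Ho^d(X)$-module in each fixed codimension.

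For the secondary statement I would first invoke Petersen's collapse of the collision spectral sequence for $i$-acyclic spaces (Remark \ref{rmk:differentials}), so that $\operatorname{gr}^F\Ho^{BM}_*(\Conf^\bullet_G(X,T))\cong E^1$ and the cross product with $\Ho^{BM}_k(X)$ lifts to an honest action realizing that of the sub-GTCA $A^1_k:=\Ind^{\FB}_{(1)}\Ho^{BM}_k(X)\subseteq E^1$, which sits at $(0,k)\in\Gen(E^1)$. By Example \ref{ex:factor_out_action} the actions of $A^1_0,\dots,A^1_{k-1}$ and $A^1_k$ pairwise commute, and factoring out $A^1_0,\dots,A^1_{k-1}$ in turn (Definition \ref{def:factor_out_action}) leaves a bigraded GTCA still free over the surviving generating subspaces; on generation loci this merely deletes the points $(0,0),\dots,(0,k-1)$. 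In the quotient, $(0,k)$ remains an isolated corner of the convex hull --- it uniquely maximizes $-Mx-y$ for $M$ large, since the only surviving generators on the axis $x=0$ lie at height $\geq k$ while all remaining generators have $x\geq\tfrac12$ --- and although a line of slope $(-1)$ through $(0,k)$ generally meets other points of $\Gen$, a line of sufficiently steep (and generic) negative slope through $(0,k)$ meets $\Gen$ only there, with all of $\Gen$ lying above it. The slope $\neq-1$ criterion (Lemma \ref{lem:filtered_fg}, case ``$\Gen$ lies above $L$ and $m<-1$'') then yields bounded finite generation of the diagonals $\Ho^{BM}_{k\bullet-i}(\Conf^\bullet_G(X,T))$ over $A^1_k$ relative to the collision filtration; since the spectral sequence has collapsed, this transfers from $E^1=\operatorname{gr}^F$ to homology because finite generation is preserved under extensions.

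The first two parts are a direct re-run of Theorem \ref{thm:primary-finite-generation} with the corner moved to the origin, so I expect the genuine difficulty to lie in the third part: keeping precise control of how the iterated ``factoring out'' of Definition \ref{def:factor_out_action} interacts with the product factorization \eqref{eq:ss_product_factorization}, so as to verify that after deleting $A^1_0,\dots,A^1_{k-1}$ the point $(0,k)$ is still an isolated corner, that the quotient is genuinely free over $A^1_k$ (so the hypotheses of Lemma \ref{lem:filtered_fg} hold), and that the required separating line of slope $<-1$ exists for every relevant $d$ and $k$.
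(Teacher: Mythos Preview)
Your proposal is correct and follows essentially the same approach as the paper's own proof. The paper's argument is extremely terse---for the primary part it simply says ``the same proof as for Theorem \ref{thm:E1_finite_generation} applies by replacing the slope $(-1)$ line touching $\Gen(E)$ from above with one touching from below,'' and for the secondary part it removes the points $(0,j)$ for $j<k$ from $\Gen(E)$, invokes Lemma \ref{lem:filtered_fg} at the new corner $(0,k)$, and then appeals to the Noetherianity argument of Theorem \ref{thm:primary-finite-generation}---and you have correctly unpacked all of these steps with the right lemmas and the right case of Lemma \ref{lem:filtered_fg}. One small remark: for the secondary ($i$-acyclic) part you transfer finite generation from $E^1=\operatorname{gr}^F$ to homology via closure under extensions, whereas the paper phrases this as ``the same Noetherianity argument as in \ref{thm:primary-finite-generation}''; these amount to the same thing here since the spectral sequence has collapsed. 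Your aside that no differential enters $E^r_{0,0}$ is not needed (and the paper does not claim it for the top corner either): it suffices that $d_r$ vanishes \emph{on} $A$, so that $d_r$ is $A$-linear by the Leibniz rule.
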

\begin{proof}
For the multiplication by $\Ho_0^{BM}(X)$ the same proof as for Theorem \ref{thm:E1_finite_generation} applies by replacing the slope $(-1)$ line touching $\Gen(E)$ from above with one touching from below.

Then for secondary and higher stabilization, fix $k\geq 1$. Factoring out the multiplication by $\Ho_j^{BM}(X)$ for $j<k$ removes the corresponding points $(0,j)$ from $\Gen(E)$, thus forming an isolated corner at $(0,k)$. The criterion of slope $\neq -1$ in Lemma \ref{lem:filtered_fg} states that multiplication by $\Ho^{BM}_k(X)$ endows the remaining diagonals in the $E^1$ page with the structure of a bounded-finitely generated module. From that point, the same Noetherianity argument as in \ref{thm:primary-finite-generation} shows that the homology is also bounded-finitely generated after factoring out the previous multiplication actions.
\end{proof}

\subsubsection{Truncated-finite generation}
The complementary notion to bounded finite generation is finite generation after truncation. Whenever the stabilization process of Theorem \ref{thm:E1_finite_generation} failed to produce finite generation (when there were multiple points on of $\Gen(E)$ of maximal taxi-cab norm) we have chosen to stabilize with the left-most point, thus giving bounded-finite generation by Lemma \ref{lem:filtered_fg}. We could have alternatively chosen to continue the process by picking the right-most point, giving instead truncated-finite generation. This approach gives a completely analogous version of Theorem \ref{thm:E1_finite_generation}, and in fact one could mix the two versions freely.

\begin{proposition}[\textbf{Stabilization with locally finite generation}]
Consider the setup of Theorem \ref{thm:E1_finite_generation} and the iterative stabilization process described therein. If at any iterate one encounters multiple points of maximal taxi-cab norm, they may pick the right-most point instead of the instruction in Step (1.) to pick the one furthest to the left. Under such a choice every diagonal of $E_{*,*}$ forms a free module that is finitely generated after truncating the (shifted) collision filtration.

To understand the module of (free) generators, proceed to Step (2.) of Theorem \ref{thm:E1_finite_generation} without any further adjustments.
\end{proposition}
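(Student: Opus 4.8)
The plan is to reuse the scaffolding of the proof of Theorem \ref{thm:E1_finite_generation}, replacing the concluding appeal to the $m>-1$ half of Lemma \ref{lem:filtered_fg} by an appeal to its $m<-1$ half.

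First I would recall the three structural facts carried over from that proof and valid at every iterate of the procedure: $\Gen(E)$ is a subset of $\Gen(E^1_{*,*})$; its only accumulation point is $(1,0)$, away from which it is a bounded discrete set; and $E_{*,*}$ remains a free module generated by a subcollection of the GTCAs $\{A^n_i\}$ by Example \ref{ex:factor_out_action}. These are exactly the standing hypotheses of Lemmas \ref{lem:slope_1} and \ref{lem:filtered_fg}, so the only thing to supply at each iterate is a separating line avoiding $(1,0)$.

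Next, fix an iterate at which the points of $\Gen(E)$ of maximal taxi-cab norm $c$ are not unique; as observed in the proof of Theorem \ref{thm:E1_finite_generation}, they all lie on a single line $L$ of slope $(-1)$. Let $v_0=(r_0,t_0)$ be the one with \emph{largest} $x$-coordinate: when these maximal-norm points form a finite set this is immediate, and in the borderline case $L=\{x+y=1\}$ one takes $v_0=(1,0)$, which still lies in $\Gen(E^1_{*,*})$ since it is contributed by the Dowling factors $\wt{\Ho}_{|\bullet|-2}(\ol{\D[\bullet]^T(G_s,s)})$. If $v_0$ is a discrete point, then a line $L'$ through $v_0$ of slope $-1-\epsilon$ with $\epsilon>0$ small meets $\Gen(E)$ only at $v_0$, avoids $(1,0)$, and has all of $\Gen(E)$ strictly below it: the remaining maximal-norm points lie to the \emph{left} of $v_0$ on $L$ and so fall below the steeper line $L'$, while every point of strictly smaller norm already sat below $L$ and stays below $L'$ throughout the bounded window containing $\Gen(E)$. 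In the borderline case $v_0=(1,0)$ one instead uses a line $L'$ of steep slope $-M$ with $M>d$; since every other point of $\Gen(E^1_{*,*})$ has vertical coordinate at most $d$, such an $L'$ meets $\Gen(E)$ only at $(1,0)$ and lies strictly above the rest of $\Gen(E)$.

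Then I would invoke the $m<-1$, ``$\Gen$ below $L'$'' clause of Lemma \ref{lem:filtered_fg}: each diagonal of $E_{*,*}$ becomes a truncated-finitely-generated $A_{v_0}$-module with respect to the shifted filtration $\wt{F}$, and freeness of the module is inherited from that of $E_{*,*}$ exactly as in Theorem \ref{thm:E1_finite_generation}. Factoring out $A_{v_0}$ and returning to Step (1) of Theorem \ref{thm:E1_finite_generation} gives the iterated statement; since every step uses only the generic facts of the second paragraph, left-most and right-most choices may be interleaved at will, which also establishes the remark that the two versions mix freely. The only point requiring any care beyond a mechanical mirroring of Theorem \ref{thm:E1_finite_generation} --- hence the main obstacle --- is this borderline case, where the ``right-most point'' is the accumulation point $(1,0)$ and one must exhibit a non-vertical separating line; the steep-line argument above, which exploits the uniform vertical bound $d$ on $\Gen(E^1_{*,*})$, handles it.
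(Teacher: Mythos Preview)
Your core argument is correct and is precisely what the paper does: replace the line of slope $-1+\epsilon$ in the proof of Theorem~\ref{thm:E1_finite_generation} by one of slope $-1-\epsilon$ through the right-most maximal-norm point, and invoke the $m<-1$ clause of Lemma~\ref{lem:filtered_fg} to obtain truncated finite generation. The paper's proof is essentially a one-line reference to this substitution and does not address any borderline case.

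Your added ``borderline case'' analysis at $v_0=(1,0)$ is, however, incorrect. The issue is not whether a steep line $L'$ of slope $-M$ through $(1,0)$ lies above all other points of $\Gen(E)$ --- it does --- but whether those points are bounded away from $L'$ by some $\epsilon>0$, as Lemma~\ref{lem:corner_fin_gen} (and hence Lemma~\ref{lem:filtered_fg}, whose proof invokes it) requires. They are not: the point $\bigl(\tfrac{n-1}{n},\tfrac{i}{n}\bigr)$ has distance $\tfrac{M-i}{n\sqrt{M^2+1}}$ from $L'$, which tends to $0$ as $n\to\infty$. Since $(1,0)$ is the accumulation point of $\Gen(E^1_{*,*})$, no line through it can be separated from the remaining generation locus by a positive gap, so the corner criterion simply does not apply there. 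Your claim that this case is ``the main obstacle'' and that the steep-line argument ``handles it'' is therefore mistaken. (A minor additional point: $(1,0)\in\Gen(E^1_{*,*})$ only when $S\neq\emptyset$; if $G$ acts freely and $T=\emptyset$ there are no Dowling factors and $(1,0)$ is merely a limit point, so the right-most maximal-norm point on $\{x+y=1\}$ does not even exist.)

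The proposition, as stated and proved in the paper, should be read as applying at iterates where the right-most maximal-norm point is one of the discrete points of $\Gen(E)$ away from $(1,0)$; in that case your argument (and the paper's) goes through verbatim. You should simply drop the borderline discussion.
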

\begin{proof}
An analogous argument as in the proof of Theorem \ref{thm:E1_finite_generation} works, but here one will consider a line of slope $-1-\epsilon$ separating the right-most point of maximal taxi-cab norm from the rest. Thus by Lemma \ref{lem:filtered_fg} truncated-finite generation follows.
\end{proof}

Even more, after the primary stabilization action has been factored out, one could have proceeded to stabilize with the ``orbiting pair" action mentioned in Theorem \ref{thm:orbiting_pair} to get the same theorem but with truncated-finite generation taking the place of factoring out the cross products by $\Ho^{BM}_{d-k}(X)$. This is a direct consequence of Lemma \ref{lem:filtered_fg}, after observing that without the top corner in Figure \ref{fig:genlocus} a new corner is formed at $\left(\frac{1}{2}, \frac{d}{2}\right)$.

There is however a substantial reason to prefer the version of Theorem \ref{thm:E1_finite_generation} as presented above: GTCAs generated in low degrees are much better understood compared to general ones. For example, any GTCA that is finitely generated in degree $1$ is Noetherian, thus giving hope that the finite-generation results of Corollary \ref{cor:i-acyclic} would apply to general spaces, while for TCAs generated in degree $\geq 3$ not much is known. The generation locus in Example \ref{ex:genlocus} has the property that points further to the left come from configurations of smaller numbers of points, and thus give better control over differentials by the above comment.

\bibliographystyle{alpha}
\bibliography{paper}

\begin{thebibliography}{Ram17}

\bibitem[Ara16]{Arabia}
A.~Arabia.
\newblock Espaces de configuration g\'{e}n\'{e}ralis\'{e}s. {E}spaces
  topologiques i-acycliques. {S}uites spectrales basiques.
\newblock {\em arxiv:1609.00522}, 2016.

\bibitem[BG18]{BG}
C.~Bibby and N.~Gadish.
\newblock Combinatorics of orbit configuration spaces.
\newblock {\em arxiv:1804.06863}, 2018.

\bibitem[Cas16]{casto}
K.~Casto.
\newblock {FI}$_{G}$-modules, orbit configuration spaces, and complex
  reflection groups.
\newblock {\em arXiv:1608.06317}, 2016.

\bibitem[CE17]{church-ellenberg}
T.~Church and J.~S. Ellenberg.
\newblock Homology of {FI}-modules.
\newblock {\em Geom. Topol.}, 21(4):2373--2418, 2017.

\bibitem[CEF15]{CEF}
T.~Church, J.~S. Ellenberg, and B.~Farb.
\newblock F{I}-modules and stability for representations of symmetric groups.
\newblock {\em Duke Math. J.}, 164(9):1833--1910, 2015.

\bibitem[Chu12]{church}
T.~Church.
\newblock Homological stability for configuration spaces of manifolds.
\newblock {\em Invent. Math.}, 188(2):465--504, 2012.

\bibitem[Fol66]{folkman}
J.~Folkman.
\newblock The homology groups of a lattice.
\newblock {\em J. Math. Mech.}, 15:631--636, 1966.

\bibitem[GL19]{GL-linear-range}
Wee~Liang Gan and Liping Li.
\newblock Linear stable range for homology of congruence subgroups via
  {FI}-modules.
\newblock {\em Selecta Math. (N.S.)}, 25(4):Paper No. 55, 11, 2019.

\bibitem[GM88]{GM}
M.~Goresky and R.~MacPherson.
\newblock {\em Stratified {M}orse theory}, volume~14 of {\em Ergebnisse der
  Mathematik und ihrer Grenzgebiete (3) [Results in Mathematics and Related
  Areas (3)]}.
\newblock Springer-Verlag, Berlin, 1988.

\bibitem[Hen06]{henderson}
A.~Henderson.
\newblock Plethysm for {Wreath} {Products} and {Homology} of {Sub}-{Posets} of
  {Dowling} {Lattices}.
\newblock {\em The Electronic Journal of Combinatorics}, 13(1):R87, December
  2006.

\bibitem[Ho20]{Ho}
Q.~P. Ho.
\newblock Higher representation stability for ordered configuration spaces and
  twisted commutative factorization algebras.
\newblock {\em arXiv: 2004.00252}, 2020.

\bibitem[Ker71]{kerber}
A.~Kerber.
\newblock {\em Representations of permutation groups. {I}}.
\newblock Lecture Notes in Mathematics, Vol. 240. Springer-Verlag, Berlin-New
  York, 1971.

\bibitem[KM18]{KM}
A.~Kupers and J.~Miller.
\newblock Representation stability for homotopy groups of configuration spaces.
\newblock {\em J. Reine Angew. Math.}, 737:217--253, 2018.

\bibitem[LR18]{ramos-li}
L.~Li and E.~Ramos.
\newblock Depth and the local cohomology of {$\mathcal{FI}_G$}-modules.
\newblock {\em Adv. Math.}, 329:704--741, 2018.

\bibitem[MW16]{MW}
J.~Miller and J.~Wilson.
\newblock Higher order representation stability and ordered configuration
  spaces of manifolds.
\newblock {\em Geometry \& Topology, \emph{to appear}}, 2016.

\bibitem[MW20]{MW2}
J.~Miller and J.~C.~H. Wilson.
\newblock {FI}-hyperhomology and ordered configuration spaces.
\newblock {\em Proc. Amer. Math. Soc.}, 148(3):993--1002, 2020.

\bibitem[NSS16]{NSS1}
R.~Nagpal, S.~V. Sam, and A.~Snowden.
\newblock Noetherianity of some degree two twisted commutative algebras.
\newblock {\em Selecta Math. (N.S.)}, 22(2):913--937, 2016.

\bibitem[NSS19]{NSS2}
R.~Nagpal, S.~V. Sam, and A.~Snowden.
\newblock Noetherianity of some degree two twisted skew-commutative algebras.
\newblock {\em Selecta Math. (N.S.)}, 25(1):Art. 4, 26, 2019.

\bibitem[Pet17]{petersen}
D.~Petersen.
\newblock A spectral sequence for stratified spaces and configuration spaces of
  points.
\newblock {\em Geometry \& Topology}, 21(4):2527--2555, 2017.

\bibitem[Pet18]{petersen-formal}
D.~Petersen.
\newblock Cohomology of generalized configuration spaces.
\newblock {\em arXiv: 1807.07293}, 2018.

\bibitem[Ram17]{Ramos}
E.~Ramos.
\newblock Generalized representation stability and {$\mathrm{FI}_d$}-modules.
\newblock {\em Proc. Amer. Math. Soc.}, 145(11):4647--4660, 2017.

\bibitem[SS12]{SS-TCA}
S.~V. Sam and A.~Snowden.
\newblock Introduction to twisted commutative algebras.
\newblock {\em arXiv:1209.5122}, 2012.

\bibitem[SS16]{SS-FIG}
S.~V. Sam and A.~Snowden.
\newblock Representations of categories of {G}-maps.
\newblock {\em J. Reine Angew. Math.}, 2016.
\newblock to appear.

\bibitem[SS17]{SS-Grobner}
S.~V. Sam and A.~Snowden.
\newblock Gr\"obner methods for representations of combinatorial categories.
\newblock {\em J. Amer. Math. Soc.}, 30(1):159--203, 2017.

\bibitem[Sun94]{sundaram}
S.~Sundaram.
\newblock The {Homology} {Representations} of the {Symmetric} {Group} on
  {Cohen}-{Macaulay} {Subposets} of the {Partition} {Lattice}.
\newblock {\em Adv. Math.}, 104(2):225--296, 1994.

\bibitem[Tos16]{tosteson}
P.~Tosteson.
\newblock Lattice {Spectral} {Sequences} and {Cohomology} of {Configuration}
  {Spaces}.
\newblock {\em arXiv:1612.06034}, 2016.

\bibitem[VW15]{VW}
R.~Vakil and M.~M. Wood.
\newblock Discriminants in the {G}rothendieck ring.
\newblock {\em Duke Math. J.}, 164(6):1139--1185, 2015.

\bibitem[Wac07]{wachs}
M.~L. Wachs.
\newblock Poset topology: tools and applications.
\newblock In {\em Geometric combinatorics}, volume~13 of {\em IAS/Park City
  Math. Ser.}, pages 497--615. Amer. Math. Soc., Providence, RI, 2007.

\bibitem[Wil14]{Wi-FIW}
J.~C.~H. Wilson.
\newblock {$\mathrm{FI}_{\mathcal{W}}$}-modules and stability criteria for
  representations of classical {Weyl} groups.
\newblock {\em J. Algebra}, 420:269--332, 2014.

\end{thebibliography}

\end{document}